\documentclass[reqno, 12pt]{amsart}
\usepackage{a4wide}
\usepackage{amscd}
\usepackage{amsmath}
\usepackage{amsfonts}
\usepackage{amssymb}
\usepackage{multirow}
\usepackage{color}
\input xy
\xyoption{all}

\allowdisplaybreaks

\usepackage[OT2,OT1]{fontenc}
\newcommand\cyr{%
\renewcommand\rmdefault{wncyr}%
\renewcommand\sfdefault{wncyss}%
\renewcommand\encodingdefault{OT2}%
\normalfont
\selectfont}
\DeclareTextFontCommand{\textcyr}{\cyr}

\newcommand{\ca}{{\mathfrak{a}}}

\newcommand{\cf}{{\mathfrak{f}}}

\newcommand{\cg}{{\mathfrak{g}}}

\newcommand{\ch}{{\mathfrak{h}}}

\newcommand{\ck}{{\mathfrak{k}}}

\newcommand{\co}{{\mathfrak{o}}}

\newcommand{\cp}{{\mathfrak{p}}}

\newcommand{\cs}{{\mathfrak{s}}}

\newcommand{\cu}{{\mathfrak{u}}}

\newcommand{\cz}{{\mathfrak{z}}}

\newcommand{\RR}{\mathbb{R}}

\newcommand{\ZZ}{\mathbb{Z}}
\newcommand{\CC}{\mathbb{C}}

\newcommand{\ad}{\mbox{ad}}

\newcommand{\grad}{\mbox{grad}}

\newcommand{\hess}{\mbox{hess}}
\newcommand{\rk}{\mbox{rk}}

\setlength{\textheight}{23cm} \setlength{\textwidth}{16cm}
\setlength{\oddsidemargin}{0cm} \setlength{\topmargin}{0cm}

\newtheorem{thm}{Theorem}[section]

\newtheorem{prop}[thm]{Proposition}
\newtheorem{cor}[thm]{Corollary}
\newtheorem{lm}[thm]{Lemma}

\numberwithin{equation}{section}

\hyphenation{Rie-mann-ian action Grass-mann-ians iso-metry either}

\begin{document}

\title[Real hypersurfaces with isometric Reeb flow]{Real hypersurfaces with isometric Reeb flow\\ in K\"{a}hler manifolds}
\author{\textsc{J\"{u}rgen Berndt} and \textsc{Young Jin Suh}}

\address{King's College London\\ Department of Mathematics \\  London\\  WC2R 2LS \\ United Kingdom}
\email{jurgen.berndt@kcl.ac.uk}
\address{Kyungpook National University \\ Department of Mathematics \\ Daegu 41566 \\Republic of Korea}
\email{yjsuh@knu.ac.kr}
\thanks{This work was supported by grant NRF-2015-R1A2A1A-01002459 from the National Research Foundation of Korea}

\begin{abstract}
We investigate the structure of real hypersurfaces with isometric Reeb flow in K\"{a}hler manifolds. As an application we classify real hypersurfaces with isometric Reeb flow in irreducible Hermitian symmetric spaces of compact type.
\end{abstract}

\maketitle
\thispagestyle{empty}

\footnote[0]{2010 \textit{Mathematics Subject Classification}: Primary 53C40. Secondary 32M15, 37C10, 53C55, 53D15.\\
\indent \textit{Key words}: Real hypersurfaces, Reeb flow, K\"{a}hler manifolds, Hermitian symmetric spaces}

\section{Introduction}

In this article we investigate the Reeb vector flow on real hypersurfaces in K\"{a}hler manifolds. Let $M$ be a connected orientable real hypersurface in a K\"{a}hler manifold $\bar{M}$ and $\zeta$ be a unit normal vector field on $M$. Denote by $J$ the complex structure on $\bar{M}$. The tangent vector field $\xi = -J\zeta$ on $M$ is the Reeb vector field on $M$ and its flow is the Reeb flow on $M$. The dynamics of the Reeb vector field is an important topic in the context of contact geometry. Here we consider the Reeb vector field in the context of almost contact geometry, which is the geometry that is naturally induced on real hypersurfaces in K\"{a}hler manifolds.

The paper consists of two main parts. In the first part we develop a general structure theory for real hypersurfaces in K\"{a}hler manifolds for which the Reeb flow preserves the induced metric. This condition is of interest for example in the K\"{a}hler reduction construction of minimal Lagrangian submanifolds in K\"{a}hler quotients. We derive some general equations involving geometric objects such as shape operators and curvature tensors. From these equations we will extract some interesting geometric information. For example, if the K\"{a}hler manifold is an irreducible Hermitian symmetric space of compact type, we can deduce that the flow lines of the Reeb flow must be closed curves, which is one of the main questions generally asked in relation to the Reeb flow. 

In the second part we apply the general theory to classify real hypersurfaces with isometric Reeb flow in  irreducible Hermitian symmetric spaces of compact type. We first review the structure theory of Hermitian symmetric spaces of compact type. These particular symmetric spaces correspond, via the Borel-de Siebenthal construction method, to simple roots whose coefficient in the highest root of the root space decomposition of a complex semisimple Lie algebra is equal to one. This allows us, using Chevalley bases, to construct algebraic models of Hermitian symmetric spaces of compact type. With a mixture of algebraic and geometric methods, and using the results of the first part, we obtain the following classification result:

\begin{thm} \label{classification}
Let $M$ be a connected orientable real hypersurface in an irreducible Hermitian symmetric space $\bar{M}$ of compact type. If the Reeb flow on $M$ is an isometric flow, then $M$ is congruent to an open part of a tube of radius $0 < t < \pi/\sqrt{2}$ around the totally geodesic submanifold $\Sigma$ in $\bar{M}$, where
\begin{itemize}
\item[(i)] $\bar{M} = \CC P^r = SU_{r+1}/S(U_1U_r)$ and $\Sigma = \CC P^k$, $0 \leq k \leq r-1$;
\item[(ii)] $\bar{M} = G_k(\CC^{r+1}) = SU_{r+1}/S(U_kU_{r+1-k})$ and $\Sigma = G_k(\CC^r)$, $2 \leq k \leq \frac{r+1}{2}$;
\item[(iii)] $\bar{M} = G_2^+(\RR^{2r}) = SO_{2r}/SO_{2r-2}SO_2$ and $\Sigma = \CC P^{r-1}$, $3 \leq r$;
\item[(iv)] $\bar{M} = SO_{2r}/U_r$ and $\Sigma = SO_{2r-2}/U_{r-1}$, $5 \leq r$.
\end{itemize}
Conversely, the Reeb flow on any of these hypersurfaces is an isometric flow. 
\end{thm}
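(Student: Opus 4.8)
The plan is to reduce the dynamical hypothesis to a tensorial one and then run a case-by-case analysis over the Borel--de Siebenthal list. Since $\nabla_X\xi = \phi SX$, where $S$ is the shape operator and $\phi$ the structure tensor field of $M$, the Reeb field $\xi$ is Killing if and only if $X \mapsto \phi SX$ is skew-symmetric; using $g(\phi X, Y) = -g(X, \phi Y)$ together with the symmetry of $S$, this is equivalent to the commutation relation $S\phi = \phi S$. I would record two immediate consequences. First, applying the relation to $\xi$ and using $\phi\xi = 0$ gives $\phi S\xi = 0$, hence $S\xi = \alpha\xi$ and $M$ is a Hopf hypersurface. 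Second, each eigenspace of $S$ inside the maximal complex subbundle $\mathcal{C} = \ker\eta$ is invariant under $\phi$, and therefore under $J$; thus all principal curvature spaces other than $\RR\xi$ are complex. This $J$-invariance of the principal spaces is the structural rigidity driving the whole argument.

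Next I would feed this rigidity into the Gauss and Codazzi equations of $\bar M$. Because $\bar M$ is an irreducible Hermitian symmetric space, its curvature tensor $\bar R$ is parallel and $\Ad(K)$-invariant, and can be written explicitly from the root-space decomposition of the complexified isometry Lie algebra in a Chevalley basis. Evaluating the Codazzi equation on a $J$-invariant principal space and contracting the Gauss equation with $\xi$, one obtains polynomial relations among the principal curvatures in which the structure constants of the root system appear as coefficients. The aim of this step is to show that the principal curvatures are constant and to pin them down to the finitely many admissible values; equivalently, to show that the Jacobi operator $\bar R(\,\cdot\,,\xi)\xi$ leaves $\mathcal{C}$ invariant and commutes with $\phi$ on it.

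Granting constant principal curvatures and complex principal spaces, I would identify $M$ as an open part of a tube around a totally geodesic complex submanifold $\Sigma$. Integrating the normal geodesics with initial velocity $\zeta$ and using that $\xi$ is principal, the first focal set is a smooth submanifold $\Sigma$ with $J$-invariant tangent spaces, and the tube structure recovers $M$; the closedness of the Reeb flow lines, furnished by the general theory of the first part, guarantees that this focalization is globally consistent. The main obstacle, and the heart of the proof, is to carry this out uniformly across the Borel--de Siebenthal list and to match each surviving $\Sigma$ against the known totally geodesic submanifolds of $\bar M$. For the spaces $Sp_r/U_r$, $E_6/Spin_{10}U_1$, $E_7/E_6U_1$, and the odd-dimensional quadrics, the root data obstruct any $J$-invariant eigenspace splitting compatible with a self-adjoint $S$ commuting with $\phi$, so no hypersurface with isometric Reeb flow exists there; in the four families (i)--(iv) precisely the listed $\Sigma$ is admissible.

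It remains to prove the converse, which is a direct computation. For a tube $M_t$ of radius $t$ around any $\Sigma$ in the list, the shape operator $S$ is obtained by solving the Jacobi equation along the normal geodesics in terms of $\bar R$. Since $\Sigma$ is complex and totally geodesic, both its (vanishing) second fundamental form and the ambient curvature commute with $J$, so the resulting $S$ commutes with $\phi$ for every admissible radius; the restriction $0 < t < \pi/\sqrt 2$ is exactly the range on which $M_t$ is an embedded hypersurface lying strictly between $\Sigma$ and the next focal set. Hence the Reeb flow on each $M_t$ is isometric, which closes the equivalence and completes the classification.
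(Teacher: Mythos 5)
Your overall skeleton (Killing $\Leftrightarrow S\phi=\phi S$, Hopf property, $J$-invariant principal spaces, tube/focal-set structure, case analysis over the Borel--de Siebenthal list, converse via Jacobi fields) matches the paper, and your first and last paragraphs are essentially correct. But the two steps you defer are precisely the hard core of the proof, and your proposal contains no workable mechanism for either. First, the constancy of the principal curvatures is not a consequence of ``polynomial relations with structure constants as coefficients.'' In the paper it requires: (a) ruling out $da(\xi)\neq 0$, which is done by showing that otherwise $\mathcal{C}$ would be curvature-invariant, forcing a totally geodesic complex hypersurface and hence index $i(\bar M)\le 2$ --- so the argument leans on the Berndt--Olmos index theory of symmetric spaces and on the already-known classifications in $\CC P^r$ and the quadrics; and (b) the Lie-algebraic fact that, after conjugation, $J\xi_o=\tfrac{1}{\sqrt2}u_\delta$ with $\delta$ the \emph{highest root}. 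This last point (Lemmas \ref{abelian}--\ref{normalproptoroot}, proved by placing $J\xi_o$ in a maximal abelian subspace spanned by strongly orthogonal roots and using Proposition \ref{IRF2} to kill all but one coefficient) is what makes the normal Jacobi operator explicitly computable, with eigenvalues $0,\tfrac12,2$ (Lemma \ref{nJo}); only then do the relations of Corollary \ref{IRF6} pin the principal curvatures to $\sqrt2\cot(\sqrt2 t)$, $0$, $\tfrac{1}{\sqrt2}\cot(\tfrac{1}{\sqrt2}t)$, $-\tfrac{1}{\sqrt2}\tan(\tfrac{1}{\sqrt2}t)$. Your proposal never identifies the normal direction as singular, so the case analysis cannot even begin.

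Second, your nonexistence/identification step --- ``the root data obstruct any $J$-invariant eigenspace splitting compatible with a self-adjoint $S$ commuting with $\phi$'' and ``match each surviving $\Sigma$ against the known totally geodesic submanifolds'' --- is a placeholder, not an argument, and the matching strategy is actually unavailable: totally geodesic submanifolds of Hermitian symmetric spaces are \emph{not} classified beyond rank two, as the paper stresses. The paper's replacement is representation-theoretic: $M$ lies on tubes around \emph{two} totally geodesic complex focal sets $P$ and $Q$ (radii $t$ and $\tfrac{\pi}{\sqrt2}-t$), the slice algebra $\ck(0)=[\cp(0),\cp(0)]$ preserves the splitting $\cp(1)=(T_c)_o\oplus(T_d)_o$, and one then checks case by case whether the $\ck(0)$-representation on $\cp(1)$ is irreducible. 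Irreducibility (types $C_r$, $E_6$, $E_7$) forces $(T_c)_o$ or $(T_d)_o$ to vanish, making a focal set a totally geodesic complex hypersurface and contradicting $i(\bar M)>2$; reducibility with exactly two components (types $(A_r,\alpha_k)$, $k\ge2$, and $(D_r,\alpha_r)$) identifies $(T_c)_o$ and $(T_d)_o$ and hence $P$ and $Q$ as explicit Lie triple systems, giving $G_k(\CC^r)$ and $SO_{2r-2}/U_{r-1}$. Note also that the odd quadrics $G_2^+(\RR^{2r+1})$ and the index-two spaces are not handled by this mechanism at all but by citing the prior classifications of Okumura and Berndt--Suh. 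Without these ingredients your proof does not go through.
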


The underlying Riemannian metric on $\bar{M}$ in Theorem \ref{classification} is the one that is induced naturally by the Killing form of the Lie algebra of the isometry group. As an immediate consequence of Theorem \ref{classification} we obtain the following nonexistence result:

\begin{cor}\label{noexist}
There are no real hypersurfaces with isometric Reeb flow in the Hermitian symmetric spaces $G_2^+(\RR^{2r+1})$ {\rm ($2 \leq r$)}, $Sp_r/U_r$ {\rm ($3 \leq r$)}, $E_6/Spin_{10}U_1$ and $E_7/E_6U_1$.
\end{cor}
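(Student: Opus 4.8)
The plan is to derive Corollary \ref{noexist} directly from the \emph{exhaustiveness} of Theorem \ref{classification}. Since that theorem asserts that an irreducible Hermitian symmetric space $\bar{M}$ of compact type carries a real hypersurface with isometric Reeb flow \emph{only if} $\bar{M}$ is one of the spaces in the list (i)--(iv), it suffices to check that none of the four spaces named in the corollary occurs in that list. The nonexistence statement is then exactly the contrapositive.

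First I would recall Cartan's classification of the irreducible Hermitian symmetric spaces of compact type. Up to isometry these are precisely: the complex Grassmannians $G_k(\CC^{r+1}) = SU_{r+1}/S(U_kU_{r+1-k})$ (with $\CC P^r$ the case $k=1$); the complex quadrics $G_2^+(\RR^n) = SO_n/SO_{n-2}SO_2$; the spaces $SO_{2r}/U_r$; the spaces $Sp_r/U_r$; and the two exceptional spaces $E_6/Spin_{10}U_1$ and $E_7/E_6U_1$. Matching this against (i)--(iv) of Theorem \ref{classification}, cases (i) and (ii) together exhaust all Grassmannians $G_k(\CC^{r+1})$; case (iii) accounts for the quadrics $G_2^+(\RR^n)$ with $n=2r$ \emph{even}; and case (iv) accounts for the spaces $SO_{2r}/U_r$. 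Hence the spaces absent from the classification are precisely the odd-dimensional quadrics $G_2^+(\RR^{2r+1})$, the family $Sp_r/U_r$, and the two exceptional spaces $E_6/Spin_{10}U_1$ and $E_7/E_6U_1$, which is exactly the list in the corollary.

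The one point demanding care --- and the only genuine obstacle in an otherwise immediate deduction --- is to confirm that the excluded spaces are irreducible and are not secretly isometric to a member of (i)--(iv) through some low-rank exceptional isomorphism; this is the role of the rank restrictions in the corollary. For instance $Sp_1/U_1 \cong \CC P^1$ lies in family (i), and the coincidence $B_2 \cong C_2$ gives $Sp_2/U_2 \cong G_2^+(\RR^5)$, so the family $Sp_r/U_r$ is genuinely new only for $r \geq 3$; similarly $G_2^+(\RR^3) \cong \CC P^1$ and $G_2^+(\RR^4)$ is reducible, which forces $r \geq 2$ for the odd quadrics $G_2^+(\RR^{2r+1})$, while $Q^3 = G_2^+(\RR^5)$ is not a complex Grassmannian and so falls outside (i)--(iv). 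I would verify that for the stated ranges the remaining quadrics (of type $B_m$, $m \geq 3$) and the spaces $Sp_r/U_r$ ($r \geq 3$) are irreducible and distinct from every space in (i)--(iv), and that the two exceptional spaces, being of type EIII and EVII, admit no such coincidences. With these checks in place, Theorem \ref{classification} immediately yields that no real hypersurface with isometric Reeb flow exists in any of them, completing the proof.
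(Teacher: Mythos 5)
Your proposal is correct and is essentially the paper's own proof: the paper derives Corollary \ref{noexist} as an immediate consequence of the exhaustiveness of Theorem \ref{classification}, exactly as you do, with the nonexistence for $Sp_r/U_r$, $E_6/Spin_{10}U_1$, $E_7/E_6U_1$ established in the body as Theorem \ref{nonexistence} and for $G_2^+(\RR^{2r+1})$ in Theorem \ref{known}. Your additional care with low-rank coincidences (e.g.\ $Sp_2/U_2 \cong G_2^+(\RR^5)$, $SO_8/U_4 \cong G_2^+(\RR^8)$) is consistent with the rank restrictions the paper builds into its statements rather than a departure from its argument.
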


In Section \ref{StcHss} we will explain how all irreducible Hermitian symmetric spaces of compact type are constructed from the root systems of type ($A_r$), ($B_r$), ($C_r$), ($D_r$), ($E_6$) and ($E_7$). Corollary \ref{noexist} can be rephrased in terms of root systems as follows:

\begin{cor} \label{rs?}
An irreducible Hermitian symmetric space of compact type admits a real hypersurface with isometric Reeb flow if and only if the underlying root system is of type {\rm ($A_r$)} or {\rm ($D_r$)}.
\end{cor}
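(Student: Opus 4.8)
The plan is to deduce Corollary \ref{rs?} purely by bookkeeping from Theorem \ref{classification} and Corollary \ref{noexist}, once every irreducible Hermitian symmetric space of compact type has been labelled by the type of its underlying root system. Recall that, via the Borel--de Siebenthal correspondence reviewed in Section \ref{StcHss}, each such space is obtained from a simple complex Lie algebra by marking a simple root whose coefficient in the highest root equals one; this partitions the complete list according to the ambient root system. The spaces arising from (\(A_r\)) are exactly the complex Grassmannians \(G_k(\CC^{r+1})\), with \(\CC P^r\) the case \(k=1\); those from (\(D_r\)) are the complex quadric \(G_2^+(\RR^{2r})\) and the space \(SO_{2r}/U_r\); those from (\(B_r\)), (\(C_r\)), (\(E_6\)), (\(E_7\)) are \(G_2^+(\RR^{2r+1})\), \(Sp_r/U_r\), \(E_6/Spin_{10}U_1\), \(E_7/E_6U_1\) respectively; and (\(E_8\)), (\(F_4\)), (\(G_2\)) yield no Hermitian symmetric space at all, since none of their simple roots has coefficient one in the highest root.

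For the ``only if'' direction I would observe that the four families in Theorem \ref{classification} are precisely the (\(A_r\)) spaces, namely items (i) and (ii), and the (\(D_r\)) spaces, namely items (iii) and (iv); hence any space carrying a real hypersurface with isometric Reeb flow must have underlying root system of type (\(A_r\)) or (\(D_r\)). For the ``if'' direction I would run the comparison the other way: every (\(A_r\)) space is a Grassmannian covered by item (i) or (ii), and each of the two (\(D_r\)) spaces is covered by item (iii) or (iv), so in either case such a hypersurface exists by the converse assertion of Theorem \ref{classification}.

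The one point requiring care --- and the only genuine obstacle --- is to confirm that the restricted parameter ranges in Theorem \ref{classification} do not accidentally omit any irreducible space of type (\(A_r\)) or (\(D_r\)). This is where the low-rank coincidences among the (\(D_r\)) root systems must be checked: the value \(r=2\) is excluded because \(D_2\cong A_1\oplus A_1\) is not simple (indeed \(G_2^+(\RR^4)\) is reducible), while \(D_3\cong A_3\) identifies \(G_2^+(\RR^6)\) and \(SO_6/U_3\) with the type (\(A_3\)) Grassmannians \(G_2(\CC^4)\) and \(\CC P^3\), already covered by items (i)--(ii). For \(r=4\), triality of \(D_4\) gives \(SO_8/U_4\cong G_2^+(\RR^8)\), so this spinor space is covered by item (iii) even though item (iv) only begins at \(r=5\). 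Once these identifications are in place, the list of admitting spaces (types (\(A_r\)), (\(D_r\))) and the list of non-admitting spaces (types (\(B_r\)), (\(C_r\)), (\(E_6\)), (\(E_7\))) exhaust and partition the full classification, and Corollary \ref{rs?} follows. As a consistency check, Corollary \ref{noexist} is recovered as the contrapositive restricted to the (\(B_r\)), (\(C_r\)), (\(E_6\)), (\(E_7\)) cases.
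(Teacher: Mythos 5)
Your proposal is correct and takes essentially the same route as the paper, which offers no separate proof of Corollary \ref{rs?} but presents it as pure bookkeeping: combine the classification in Theorem \ref{classification} (equivalently, Corollary \ref{noexist}) with the Borel--de Siebenthal labelling of the irreducible compact Hermitian symmetric spaces by root systems set out in Section \ref{StcHss}. Your explicit verification of the low-rank coincidences ($D_2 \cong A_1 \oplus A_1$, $D_3 \cong A_3$, and triality giving $SO_8/U_4 \cong G_2^+(\RR^8)$ for $D_4$) spells out carefully a point the paper leaves implicit, but the argument is the same.
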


We emphasize that this is an observation based on the classification result. We have no direct explanation of this fact and it would be interesting to find a direct argument.

All the real hypersurfaces in Theorem \ref{classification} are orbits of cohomogeneity one actions and therefore homogeneous. This gives as another consequence:

\begin{cor}
Any connected orientable real hypersurface $M$ with isometric Reeb flow in an irreducible Hermitian symmetric space of compact type is locally homogeneous. In particular, if $M$ is complete, then $M$ is homogeneous. 
\end{cor}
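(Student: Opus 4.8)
The plan is to obtain the corollary as a direct consequence of the classification in Theorem~\ref{classification} together with the homogeneity of the model hypersurfaces occurring there. By Theorem~\ref{classification}, a connected orientable real hypersurface $M$ with isometric Reeb flow in an irreducible Hermitian symmetric space $\bar M$ of compact type is congruent to an open part of one of the tubes $T$ of radius $0 < t < \pi/\sqrt 2$ around the totally geodesic submanifold $\Sigma$ described in cases (i)--(iv). Fixing an isometry $\Phi$ of $\bar M$ with $\Phi(M) = U \subset T$ an open subset, it therefore suffices to prove that each such tube $T$ is a homogeneous Riemannian manifold and then to transfer this property to $M$.

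First I would record that each tube $T$ is an orbit of an isometric cohomogeneity one action, as already noted in the discussion preceding the corollary: the hypersurfaces of Theorem~\ref{classification} arise as orbits of cohomogeneity one actions of suitable closed subgroups $H$ of the isometry group $G$ of $\bar M$. Since such an $H$ acts transitively on $T$ through isometries of $\bar M$ that restrict to isometries of $T$ equipped with its induced metric, the tube $T$ is a homogeneous Riemannian manifold. In particular $T$ is complete; being an orbit of the compact group $H$ acting on the compact manifold $\bar M$, it is in fact compact.

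Next I would deduce local homogeneity. This is an intrinsic and purely local property, inherited by open subsets, so it is enough to verify it for $U \subset T$. Given $p, q \in U$, transitivity of $H$ on $T$ yields $h \in H$ with $h(p) = q$; the restriction of $h$ to the open neighbourhood $U \cap h^{-1}(U)$ of $p$ in $T$ is then an isometry onto an open neighbourhood of $q$ in $T$ taking $p$ to $q$. Pulling back along $\Phi$ shows that $M$ is locally homogeneous.

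Finally, assume $M$ is complete. Then $U = \Phi(M)$ is a complete, connected, open subset of the connected complete manifold $T$. If $U \ne T$, choose $q \in U$ and a point $p$ in the boundary of $U$ in $T$, and let $\gamma \colon [0,\ell] \to T$ be a minimizing geodesic from $q$ to $p$, which exists by Hopf--Rinow. Setting $s^* = \sup\{ s : \gamma([0,s]) \subset U \}$, the arc $\gamma|_{[0,s^*)}$ is a geodesic of $U$ of finite length whose limit point $\gamma(s^*)$ lies outside $U$; hence $U$ is geodesically incomplete, a contradiction. Therefore $U = T$, so $M$ is isometric to the homogeneous manifold $T$ and is itself homogeneous. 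I expect that the only step needing genuine care is this last completeness argument, everything else being immediate from Theorem~\ref{classification} and the transitivity of the relevant cohomogeneity one actions.
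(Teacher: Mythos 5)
Your proposal is correct and takes essentially the same approach as the paper, which deduces the corollary directly from Theorem \ref{classification} together with the observation that all the model hypersurfaces there are orbits of cohomogeneity one actions and therefore homogeneous. The extra details you supply (restricting the transitive isometries to open sets to get local homogeneity, and the minimizing-geodesic argument showing that a complete connected open subset of the tube must be the whole tube) are valid fillings-in of steps the paper leaves implicit.
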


It is remarkable that the existence of this one-parameter group of isometries implies (local) homogeneity and therefore has such a strong influence on the geometry of $M$. 

We give a brief sketch of the proof. Using methods from Riemannian geometry and the recently developed theory about the index of symmetric spaces, we first show that $M$ has constant principal curvatures and that the geometry of $M$ is in some sense nicely adapted to the geometry of the ambient space $\bar{M}$. We then reformulate this is in algebraic terms using algebraic models for the Hermitian symmetric spaces of compact type that are based on Borel-de Siebenthal theory. Using Lie algebraic methods we show that the normal vectors of $M$ correspond to the highest root in the algebraic model. The geometric interpretation of this is that the geodesics in $\bar{M}$ that intersect $M$ perpendicularly are closed geodesics in $\bar{M}$ of shortest length. We then apply Jacobi field theory to prove that $M$ has two totally geodesic complex focal sets. Rigidity of totally geodesic submanifolds tells us that $M$ must lie on a tube around each of its two focal sets. The problem we are now facing is that totally geodesic submanifolds in Hermitian symmetric spaces are not classified (except for rank $1$ and $2$), and so we need further arguments to determine the structure of the focal sets. This is finally achieved by investigating representations of certain compact Lie algebras. 

\section{Structure theory of real hypersurfaces with isometric Reeb flow} \label{StorhwiRf}

In this section we investigate the geometry of real hypersurfaces with isometric Reeb flow in K\"{a}hler manifolds and, in some more depth, in Hermitian symmetric spaces. For general background on K\"{a}hler manifolds, including the curvature identities we are using in this section, we refer the reader to \cite{Ba06}. A good resource for Hermitian symmetric spaces is \cite{He01}.

Let $\bar{M}$ be a K\"{a}hler manifold. We denote by $g$ the Riemannian metric, by $J$ the K\"{a}hler structure, by $\bar\nabla$ the Levi Civita covariant derivative and by $\bar{R}$ the Riemannian curvature tensor on $\bar{M}$.

Let $M$ be a connected orientable real hypersurface in $\bar{M}$. We denote also by $g$ the Riemannian metric on $M$ that is induced from the one on $\bar{M}$. Since $M$ is orientable, there exists a global unit normal vector field $\zeta$ on $M$. The vector field $\xi = -J\zeta$ is the so-called Reeb vector field on $M$. The one-form on $M$ dual to $\xi$ is denote by $\eta$, that is, $\eta(X) = g(X,\xi)$, $X \in TM$.  The structure tensor $\phi$ on $M$ is defined by $\phi X = JX - g(JX,\zeta)\zeta = JX -  \eta(X)\zeta$. We denote by $\nabla$ the Levi Civita covariant derivative on $M$ and by $S$ the shape operator of $M$ with respect to $\zeta$. The normal Jacobi operator $\bar{R}_\zeta$ on $M$ is defined by $\bar{R}_\zeta X = \bar{R}(X,\zeta)\zeta$. We denote by ${\mathcal C} = \ker(\eta)$ the maximal complex subbundle of the tangent bundle $TM$ of $M$  and by $X_{\mathcal C}$ the orthogonal projection of $X \in T_p\bar{M}$, $p \in M$, onto ${\mathcal C}_p$. 

We first develop some general theory for real hypersurfaces for which the Reeb flow is a geodesic flow, that is, each integral curve of $\xi$ is a geodesic in $M$. Such hypersurfaces are also known in the literature as Hopf hypersurfaces. We start with a simple but very useful observation. 

\begin{prop}\label{geodReeb}
A connected orientable real hypersurface $M$ in a K\"{a}hler manifold $\bar{M}$ has geodesic Reeb flow if and only if $\xi$ is a principal curvature vector of $M$ everywhere.
\end{prop}

\begin{proof} The Reeb flow on $M$ is a geodesic flow if and only if $\nabla_\xi \xi = 0$. The Gau{\ss} and Weingarten formulae imply
$g(\nabla_\xi \xi, X) = -g(\bar\nabla_\xi J\zeta,X) = -g(J\bar\nabla_\xi\zeta,X) = g(JS\xi,X) = g(\phi S\xi,X)$
for all vector fields $X$ on $M$. It follows that $\nabla_\xi \xi = 0$ if and only if $\phi S\xi = 0$. The latter equation means that $S\xi \in \ker(\phi) = \RR \xi$. 
\end{proof}

If $\xi$ is a principal curvature vector of $M$ everywhere, we denote by $a = g(S\xi,\xi)$ the corresponding principal curvature function on $M$, so $S\xi = a\xi$.

The following result relates some geometric objects on $M$ when the Reeb flow is a geodesic flow.

\begin{prop}\label{GRF} 
Let $M$ be a connected orientable real hypersurface with geodesic Reeb flow in a K\"{a}hler manifold $\bar{M}$. Then
\begin{align*}
& a g((S\phi + \phi S)X,Y) - 2g(S\phi SX,Y) \\
& =  g(\bar{R}_{J\xi}X,JY) - g(\bar{R}_{J\xi}Y,JX) + g(\bar{R}_{J\xi}\xi,JX)\eta(Y) - g(\bar{R}_{J\xi}\xi,JY)\eta(X) .
\end{align*}
\end{prop}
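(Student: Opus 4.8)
The plan is to run the standard Gauss--Codazzi machinery for a real hypersurface, specialized to the Hopf condition $S\xi = a\xi$ furnished by Proposition \ref{geodReeb}, and then to recognize that the right-hand side is merely a repackaging of the normal Jacobi operator via the K\"{a}hler identities. The first thing to record is that $J\xi = \zeta$ (so that $\bar{R}_{J\xi} = \bar{R}_\zeta$) and $J\zeta = -\xi$, both of which follow from $\xi = -J\zeta$ and $J^2 = -\operatorname{id}$.

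First I would assemble the basic tools. Since $\bar{M}$ is K\"{a}hler, $\bar\nabla J = 0$; combined with the Weingarten formula $\bar\nabla_X\zeta = -SX$ this gives $\nabla_X\xi = \phi SX$, and a short computation with the Gau{\ss} formula yields the standard identity $(\nabla_X\phi)Y = \eta(Y)SX - g(SX,Y)\xi$. I will also use that $\phi$ is skew-symmetric while $S$ and each $\nabla_X S$ are symmetric, and the Codazzi equation in the form $(\nabla_X S)Y - (\nabla_Y S)X = -\bar{R}(X,Y)\zeta$ (the right-hand side being automatically tangent to $M$, since $g(\bar{R}(X,Y)\zeta,\zeta)=0$). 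Differentiating the Hopf relation $S\xi = a\xi$ along $X$ and using $\nabla_X\xi = \phi SX$ produces the key first-order identity $(\nabla_X S)\xi = (Xa)\xi + a\phi SX - S\phi SX$.

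The core of the argument is to evaluate $g((\nabla_X S)\xi,Y) - g((\nabla_Y S)\xi,X)$ in two ways. Expanding with the key identity and using skew-symmetry of $\phi$ and symmetry of $S$ turns the $a\phi S - S\phi S$ terms into exactly $a\,g((S\phi+\phi S)X,Y) - 2g(S\phi SX,Y)$, plus the boundary terms $(Xa)\eta(Y) - (Ya)\eta(X)$. On the other hand, symmetry of $\nabla_X S$ lets me rewrite the same expression as $g((\nabla_X S)Y - (\nabla_Y S)X,\xi)$, which by Codazzi equals $-g(\bar{R}(X,Y)\zeta,\xi)$. Equating the two gives
\[
a\,g((S\phi+\phi S)X,Y) - 2g(S\phi SX,Y) = -g(\bar{R}(X,Y)\zeta,\xi) - (Xa)\eta(Y) + (Ya)\eta(X).
\]

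It remains to identify this right-hand side with that of the proposition, and here the K\"{a}hler symmetries do the work. Using $g(\,\cdot\,,JY) = -g(J\,\cdot\,,Y)$, $\bar\nabla J = 0$, $J\zeta = -\xi$, the pair symmetry and the first Bianchi identity, I expect to convert $g(\bar{R}_\zeta X,JY) - g(\bar{R}_\zeta Y,JX)$ into $g(\bar{R}(X,Y)\xi,\zeta) = -g(\bar{R}(X,Y)\zeta,\xi)$, matching the first term above. For the derivative-of-$a$ terms I would show $Xa = g((\nabla_X S)\xi,\xi)$, feed the Hopf relation into Codazzi in the $\xi$-direction together with $(\nabla_\xi S)\xi = (\xi a)\xi$ to obtain $Xa - (\xi a)\eta(X) = g(\bar{R}(\zeta,\xi)\xi,X)$, and compare with the analogous rewriting $g(\bar{R}_\zeta\xi,JX) = g(\bar{R}(\xi,\zeta)\xi,X)$. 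The main obstacle is precisely this bookkeeping: getting every curvature symmetry and sign right, and noticing that although $\xi a$ need not vanish, the $(\xi a)\eta$-contributions cancel in the antisymmetric combination, so that $-(Xa)\eta(Y) + (Ya)\eta(X)$ assembles exactly into $g(\bar{R}_\zeta\xi,JX)\eta(Y) - g(\bar{R}_\zeta\xi,JY)\eta(X)$, completing the identification.
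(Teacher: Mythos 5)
Your proposal is correct and follows essentially the same route as the paper's proof: differentiate the Hopf relation $S\xi = a\xi$ to get $(\nabla_XS)\xi = (Xa)\xi + a\phi SX - S\phi SX$, antisymmetrize and apply the Codazzi equation, eliminate the $Xa$, $Ya$ terms by specializing Codazzi in the $\xi$-direction (where the $(\xi a)\eta$-contributions indeed cancel), and convert $-g(\bar{R}(X,Y)\zeta,\xi)$ into $g(\bar{R}_{J\xi}X,JY) - g(\bar{R}_{J\xi}Y,JX)$ via the first Bianchi identity and the K\"ahler curvature symmetries. All of your intermediate identities and signs are consistent with the paper's conventions, so the sketch would go through as written.
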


\begin{proof}
Using the Codazzi equation we get
\begin{align*}
g(\bar{R}(X,Y)\xi,J\xi)
& = g((\nabla_XS)Y - (\nabla_YS)X, \xi) 
= g((\nabla_XS)\xi,Y) - g((\nabla_YS)\xi,X) \\
&  = da(X)\eta(Y) - da(Y)\eta(X) + a g((S\phi + \phi S)X,Y) - 2g(S\phi SX,Y).
\end{align*}
Inserting $Y = \xi$ yields
\[
da(X) = da(\xi)\eta(X) + g(\bar{R}(X,\xi)\xi,J\xi) = da(\xi)\eta(X) - g(\bar{R}_{J\xi}\xi,JX),
\] 
and inserting this equation and the corresponding one for $da(Y)$ into the previous equation gives
\begin{align*}
& a g((S\phi + \phi S)X,Y) - 2g(S\phi SX,Y) \\
& =  g(\bar{R}(X,Y)\xi,J\xi) + g(\bar{R}_{J\xi}\xi,JX)\eta(Y) - g(\bar{R}_{J\xi}\xi,JY)\eta(X) .
\end{align*}
Finally, using the algebraic Bianchi identity, we get
\begin{align*}
g(\bar{R}(X,Y)\xi,J\xi)
& = - g(\bar{R}(Y,\xi)X,J\xi) - g(\bar{R}(\xi,X)Y,J\xi) \\
& = - g(\bar{R}(JY,J\xi)X,J\xi) - g(\bar{R}(J\xi,JX)Y,J\xi) \\
& =  g(\bar{R}(JY,J\xi)J\xi,X) - g(\bar{R}(J\xi,JX)Y,J\xi) \\
& = g(\bar{R}_{J\xi}X,JY) - g(\bar{R}_{J\xi}Y,JX).
\end{align*}
This finishes the proof.
\end{proof}

Using the fact that for Hermitian symmetric spaces the Riemannian curvature tensor is parallel, we obtain an alternative equation involving $S\phi + \phi S$.

\begin{prop}\label{GRF2} 
Let $M$ be a connected orientable real hypersurface with geodesic Reeb flow in an Hermitian symmetric space $\bar{M}$. Then
\begin{align*}
& da(\xi)g((S\phi + \phi S)X,Y) \\
& \quad = \eta(X)g(\bar{R}_{J\xi} \xi ,SY)   - \eta(Y)g(\bar{R}_{J\xi} \xi ,SX)   
 - a \eta(X) g(\bar{R}_{J\xi} \xi ,Y) + a \eta(Y)  g(\bar{R}_{J\xi} \xi ,X) \\
& \qquad + 3g(\bar{R}_{J\xi} JY ,JSX) - 3g(\bar{R}_{J\xi} JX,JSY)    + g(\bar{R}_{J\xi} Y,SX) 
  - g(\bar{R}_{J\xi}X,SY) .
\end{align*}
\end{prop}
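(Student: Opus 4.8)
The plan is to obtain Proposition~\ref{GRF2} by differentiating the tensor identity of Proposition~\ref{GRF} covariantly along the Reeb vector field $\xi$, using in an essential way that the curvature tensor of a Hermitian symmetric space is parallel, $\bar\nabla\bar{R} = 0$. First I would record the transport data that make such a differentiation tractable. Since the Reeb flow is geodesic and $S\xi = a\xi$, we have $\nabla_\xi\xi = 0$, and from $\nabla_X\xi = \phi SX$ together with the standard identity $(\nabla_X\phi)Y = \eta(Y)SX - g(SX,Y)\xi$ one finds $\nabla_\xi\eta = 0$ and $\nabla_\xi\phi = 0$. On the ambient side the Weingarten and Gau{\ss} formulae give $\bar\nabla_\xi\zeta = -a\xi$, $\bar\nabla_\xi\xi = a\zeta$ and $\bar\nabla_\xi X = \nabla_\xi X + a\eta(X)\zeta$. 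Finally, the Codazzi equation and the formula $(\nabla_X S)\xi = da(X)\xi + a\phi SX - S\phi SX$ from the proof of Proposition~\ref{GRF} combine to the central expression
\[
(\nabla_\xi S)X = \bar{R}(\xi,X)\zeta + da(X)\xi + a\phi SX - S\phi SX .
\]

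Write $A(X,Y) = g((S\phi + \phi S)X,Y)$ and let $\Psi(X,Y)$ denote the right-hand side of Proposition~\ref{GRF}, so that that proposition reads $aA(X,Y) - 2g(S\phi SX,Y) = \Psi(X,Y)$. Applying $\nabla_\xi$ to this identity, the term $\nabla_\xi(aA)$ produces the desired leading term $da(\xi)A(X,Y)$ together with $a(\nabla_\xi A)$. The derivative $(\nabla_\xi\Psi)$ can be evaluated purely from $\bar\nabla\bar{R} = 0$, $\bar\nabla J = 0$ and the transport formulae above; since differentiation along $\xi$ moves $\xi$ and $\zeta = J\xi$ only by multiples of $a$, it contributes curvature terms of the type $a\,\eta(\cdot)\,g(\bar{R}_{J\xi}\xi,\cdot)$. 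Together with the like contributions coming from the $da(\cdot)\xi$ part of $\nabla_\xi S$, these account for the summands $-a\eta(X)g(\bar{R}_{J\xi}\xi,Y) + a\eta(Y)g(\bar{R}_{J\xi}\xi,X)$ of the assertion.

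The single-$S$ terms of Proposition~\ref{GRF2} appear once the expression for $\nabla_\xi S$ is substituted into $\nabla_\xi A$ and into $\nabla_\xi\bigl(g(S\phi S\,\cdot,\cdot)\bigr)$, where $\nabla_\xi\phi = 0$ shortens the computation. The summand $da(X)\xi$ of $\nabla_\xi S$, combined with the relation $da(X) = da(\xi)\eta(X) - g(\bar{R}_{J\xi}\xi,JX)$ from the proof of Proposition~\ref{GRF} and with the algebraic identities $J\phi SX = -SX + a\eta(X)\xi$ and $\phi^2X = -X + \eta(X)\xi$, accounts for the terms $\eta(X)g(\bar{R}_{J\xi}\xi,SY) - \eta(Y)g(\bar{R}_{J\xi}\xi,SX)$. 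The summand $\bar{R}(\xi,X)\zeta$, rewritten through the K\"{a}hler curvature identities $\bar{R}(JX,JY) = \bar{R}(X,Y)$ and $\bar{R}(X,Y)J = J\bar{R}(X,Y)$ as expressions in $\bar{R}_{J\xi}$, is responsible for the terms $3g(\bar{R}_{J\xi}JY,JSX) - 3g(\bar{R}_{J\xi}JX,JSY) + g(\bar{R}_{J\xi}Y,SX) - g(\bar{R}_{J\xi}X,SY)$, whose coefficients are pinned down only after the reduction described below. At every stage I would use that the whole identity is antisymmetric in $X$ and $Y$, since $S\phi + \phi S$ is skew-symmetric; this discards all symmetric by-products, for instance the scalar terms in $g(\bar{R}_{J\xi}\xi,\xi)$ and the symmetric curvature terms produced by the $\bar{R}(\xi,\cdot)\zeta$ part of $\nabla_\xi S$ inside $\nabla_\xi A$.

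The main obstacle is the purely algebraic part $a\phi SX - S\phi SX$ of $\nabla_\xi S$: substituted into $\nabla_\xi A$ and $\nabla_\xi(S\phi S)$ it produces terms of higher order in $S$, up to $S\phi S\phi S$, which do not occur in Proposition~\ref{GRF2} and must therefore cancel. The device for handling them is to re-apply Proposition~\ref{GRF} in the form $2g(S\phi SX,Y) = aA(X,Y) - \Psi(X,Y)$, trading each factor $S\phi S$ for $aA$ plus curvature, while reorganising the remaining products with $\phi^2X = -X + \eta(X)\xi$, $\phi\xi = 0$ and $S\xi = a\xi$. This reduction not only removes the unwanted higher-order terms but also feeds additional single-$S$ curvature back into the computation, which is what raises the coefficient of the $g(\bar{R}_{J\xi}JY,JSX)$-type terms to $3$. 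Verifying that, after all these substitutions and the antisymmetrisation in $X$ and $Y$, precisely the terms listed in Proposition~\ref{GRF2} survive is the technical heart of the argument; once this is done, collecting the remaining terms gives exactly the stated identity.
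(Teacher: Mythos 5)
Your route is genuinely different from the paper's: the paper never differentiates Proposition \ref{GRF}, but instead exploits the closedness of $da$ (symmetry of the Hessian of $a$), expanding $0 = Xda(Y) - Yda(X) - da([X,Y])$ with the formula $da(X) = da(\xi)\eta(X) - g(\bar{R}_{J\xi}\xi,JX)$ and $\bar\nabla\bar{R}=0$; the left-hand side $da(\xi)g((S\phi+\phi S)X,Y)$ arises there as $da(\xi)\,d\eta(X,Y)$. Your plan --- differentiate Proposition \ref{GRF} along $\xi$ and eliminate $\nabla_\xi S$ by Codazzi --- is a reasonable alternative in outline, and the cancellation of higher order terms in $S$ that you anticipate does occur after reduction by Proposition \ref{GRF}. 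But your one concrete formula carries the wrong sign on the curvature term. With the paper's conventions (the Codazzi equation as displayed in the proof of Proposition \ref{GRF}, equivalently $\bar{R}(X,Y)J\xi = (\nabla_YS)X - (\nabla_XS)Y$), the correct statement is
\[
(\nabla_\xi S)X = (\nabla_XS)\xi + \bar{R}(X,\xi)J\xi = da(X)\xi + a\phi SX - S\phi SX + \bar{R}_{J\xi}JX ,
\]
whereas you wrote $+\bar{R}(\xi,X)\zeta$, the negative of the curvature term. This is not cosmetic: $\bar{R}(X,\xi)J\xi = \bar{R}_{J\xi}JX$ is precisely the source of the curvature--shape-operator terms whose coefficients ($3$ and $1$) are the content of Proposition \ref{GRF2}; it enters your identity linearly, so flipping its sign changes those coefficients, and antisymmetrization in $X,Y$ does not repair this, because the antisymmetric part of its contribution (terms of the types $a\eta(X)g(\bar{R}_{J\xi}\xi,Y) - a\eta(Y)g(\bar{R}_{J\xi}\xi,X)$ and $g(\bar{R}_{J\xi}Y,SX) - g(\bar{R}_{J\xi}X,SY)$) does not vanish in a general Hermitian symmetric space. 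A concrete test: for a geodesic sphere in a complex projective space of holomorphic sectional curvature $4$ one has $\nabla_\xi S = 0$ (here $S = \lambda I + (a-\lambda)\eta\otimes\xi$ with $\lambda,a$ constant and $\nabla_\xi\xi = 0$); the corrected formula confirms this, since there $\bar{R}_{J\xi}JX = \phi X$ and Proposition \ref{GRF} reduces to $\lambda^2 - a\lambda - 1 = 0$, whereas your formula yields $(\nabla_\xi S)X = -2\phi X \neq 0$ for $X \in {\mathcal C}$.

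Second, and more fundamentally, the proposal stops exactly where the proposition begins. Proposition \ref{GRF2} asserts an identity with specific terms and specific coefficients, and your argument defers the entire verification that these are what survive (``the technical heart'') to an unexecuted computation, offering only a heuristic attribution of terms to sources --- an attribution that is in part demonstrably off. For instance, the $da(\cdot)\xi$ summand of $\nabla_\xi S$, which you say accounts for $\eta(X)g(\bar{R}_{J\xi}\xi,SY) - \eta(Y)g(\bar{R}_{J\xi}\xi,SX)$, actually contributes this combination with the \emph{opposite} sign after antisymmetrization (it enters through $2\,da(\phi SX)\eta(Y)$ with $da(\phi SX) = g(\bar{R}_{J\xi}\xi,SX) - a\eta(X)g(\bar{R}_{J\xi}\xi,\xi)$), so the stated terms can only emerge from cancellations against the contributions of $\bar{R}_{J\xi}J(\cdot)$, of the curvature form $\Psi$ fed back by the reduction, and of $\nabla_\xi\Psi$, none of which you compute; likewise the coefficient $3$, which in the paper's proof arises from a final Bianchi rearrangement ($2+1$), is asserted rather than derived. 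As it stands you have a plausible strategy --- one that, with the sign corrected, does reproduce the proposition in the constant holomorphic curvature case --- but not a proof of the stated identity in a general Hermitian symmetric space.
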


\begin{proof}
From
\[
da(X) = da(\xi)\eta(X) - g(\bar{R}_{J\xi}\xi,JX) = da(\xi)\eta(X) - g(\bar{R}(\xi,J\xi)J\xi,JX)
\]
we get
\[
\grad^Ma = da(\xi)\xi + J\bar{R}_{J\xi}\xi - g(\bar{R}_{J\xi}\xi,\xi)J\xi
\]
and hence
\[
\hess^a(X,Y)   = g(\nabla_X\grad^Ma,Y)  = Xda(Y) - da(\nabla_XY).
\]
As the Hessian of a function is symmetric, we have
\[
0 = \hess^a(X,Y) - \hess^a(Y,X) = Xda(Y)  - Yda(X) - da([X,Y]) .
\]
Since $\bar{M}$ is an Hermitian symmetric space, we have $\bar\nabla \bar{R} = 0$ and $\bar\nabla J = 0$. Using these identities we first calculate $Xda(Y)$ by 
\begin{align*}
Xda(Y) & = X(da(\xi)\eta(Y) - g(\bar{R}(\xi,J\xi)J\xi,JY)) \\
& = (Xda(\xi))\eta(Y) + da(\xi)(X\eta(Y)) - Xg(\bar{R}(\xi,J\xi)J\xi,JY) \\
& = \hess^a(X,\xi)\eta(Y) + da(\nabla_X\xi)\eta(Y) + da(\xi)\eta(\nabla_XY) + da(\xi)g(Y,\nabla_X\xi) \\
& \qquad - g(\bar\nabla_X\bar{R}(\xi,J\xi)J\xi,JY) - g(\bar{R}(\xi,J\xi)J\xi,\bar\nabla_XJY) \\
& = \hess^a(X,\xi)\eta(Y) + da(\phi SX)\eta(Y) + da(\xi)\eta(\nabla_XY) + da(\xi)g(Y,\phi SX) \\
& \qquad - g(\bar{R}(\bar\nabla_X\xi,J\xi)J\xi,JY) - g(\bar{R}(\xi,\bar\nabla_XJ\xi)J\xi,JY) - g(\bar{R}(\xi,J\xi)\bar\nabla_XJ\xi,JY) \\ 
& \qquad - g(\bar{R}(\xi,J\xi)J\xi,\bar\nabla_XJY) \\
& = \hess^a(X,\xi)\eta(Y) + da(\phi SX)\eta(Y) + da(\xi)\eta(\nabla_XY) + da(\xi)g(Y,\phi SX) \\
& \qquad - 2g(\bar{R}(\bar\nabla_X\xi,J\xi)J\xi,JY) - g(\bar{R}(\xi,J\xi)\bar\nabla_X\xi,Y) - g(\bar{R}(\xi,J\xi)J\xi,J\bar\nabla_XY) \\
& = \hess^a(X,\xi)\eta(Y) + da(\phi SX)\eta(Y) + da(\xi)\eta(\nabla_XY) + da(\xi)g(Y,\phi SX) \\
& \qquad - 2g(\bar{R}(JSX,J\xi)J\xi,JY)   - g(\bar{R}(\xi,J\xi)JSX,Y) - g(\bar{R}(\xi,J\xi)J\xi,J\bar\nabla_XY).
\end{align*}
This implies
\begin{align*}
0 & =  Xda(Y)  - Yda(X) - da([X,Y]) \\
& = \hess^a(X,\xi)\eta(Y) + da(\phi SX)\eta(Y) + da(\xi)\eta(\nabla_XY) + da(\xi)g(Y,\phi SX) \\
& \qquad - 2g(\bar{R}(JSX,J\xi)J\xi,JY)   - g(\bar{R}(\xi,J\xi)JSX,Y)  - g(\bar{R}(\xi,J\xi)J\xi,J\bar\nabla_XY) \\
& \qquad - \hess^a(Y,\xi)\eta(X) - da(\phi SY)\eta(X) - da(\xi)\eta(\nabla_YX) - da(\xi)g(X,\phi SY) \\
& \qquad + 2g(\bar{R}(\phi SY,J\xi)J\xi,JX)   + g(\bar{R}(\xi,J\xi)J SY,X) + g(\bar{R}(\xi,J\xi)J\xi,J\bar\nabla_YX) \\
& \qquad - da(\xi)\eta([X,Y]) + g(\bar{R}(\xi,J\xi)J\xi,J[X,Y]) \displaybreak[0] \\ 
& = \hess^a(X,\xi)\eta(Y) + da(\phi SX)\eta(Y)  + da(\xi)g(Y,\phi SX) \\
& \qquad - 2g(\bar{R}(JSX,J\xi)J\xi,JY)   - g(\bar{R}(\xi,J\xi)JSX,Y) \\ 
& \qquad - \hess^a(Y,\xi)\eta(X) - da(\phi SY)\eta(X)  - da(\xi)g(X,\phi SY) \\
& \qquad + 2g(\bar{R}(JSY,J\xi)J\xi,JX)   + g(\bar{R}(\xi,J\xi)J SY,X)  \\
& = \hess^a(X,\xi)\eta(Y) - \hess^a(Y,\xi)\eta(X) + da(\phi SX)\eta(Y)  - da(\phi SY)\eta(X) \\
& \qquad + da(\xi)g((S\phi + \phi S)X,Y) \\
& \qquad - 2g(\bar{R}(JSX,J\xi)J\xi,JY)   - g(\bar{R}(\xi,J\xi)JSX,Y)  \\ 
& \qquad + 2g(\bar{R}(JSY,J\xi)J\xi,JX)   + g(\bar{R}(\xi,J\xi)J SY,X) .
\end{align*}
Inserting $Y = \xi$ gives
\[
 \hess^a(X,\xi) = \hess^a(\xi,\xi)\eta(X) - da(\phi SX)  + g(\bar{R}(\xi,J\xi)J\xi,SX)    - a g(\bar{R}(\xi,J\xi)J \xi,X) .
\]
Inserting this and the analogous equation for $\hess^a(Y,\xi)$ into the previous equation leads to
\begin{align*}
da(\xi)g((S\phi + \phi S)X,Y) & =   \eta(X)g(\bar{R}(\xi,J\xi)J\xi,SY) - \eta(Y)g(\bar{R}(\xi,J\xi)J\xi,SX) \\
& \qquad  - a \eta(X)  g(\bar{R}(\xi,J\xi)J \xi,Y) + a \eta(Y)  g(\bar{R}(\xi,J\xi)J \xi,X)  \\
& \qquad + 2g(\bar{R}(JSX,J\xi)J\xi,JY)   + g(\bar{R}(\xi,J\xi)JSX,Y)  \\ 
& \qquad - 2g(\bar{R}(JSY,J\xi)J\xi,JX)   - g(\bar{R}(\xi,J\xi)J SY,X)  .
\end{align*}
Using again the algebraic Bianchi identity, we can rewrite
\[
g(\bar{R}(\xi,J\xi)JSX,Y) = g(\bar{R}(JSX,J\xi)J\xi,JY) + g(\bar{R}(Y,J\xi)J\xi,SX) .
\]
Inserting this equation, and the corresponding one with $X$ and $Y$ interchanged, into the previous equation leads to the equation in the assertion.
\end{proof}

As a consequence we derive some useful equations involving the principal curvatures of $M$.

\begin{cor}\label{GRF3} 
Let $M$ be a connected orientable real hypersurface with geodesic Reeb flow in an Hermitian symmetric space $\bar{M}$. Let $X,Y \in {\mathcal C}$ with $SX = b X$ and $SY = c Y$. Then we have
\[
(a (b + c) - 2bc)g(JX,Y)  = g(\bar{R}_{J\xi}X,JY) - g(\bar{R}_{J\xi}Y,JX)
\]
and
\[
(b+c) da(\xi)g(JX,Y) = 3(b-c) g(\bar{R}_{J\xi}JX,JY)   + (b-c) g(\bar{R}_{J\xi}X,Y)  .
\]
\end{cor}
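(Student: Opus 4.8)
The plan is to read both identities directly off Propositions \ref{GRF} and \ref{GRF2} by specializing them to principal vectors $X, Y \in {\mathcal C}$, using three elementary facts. First, $\eta(X) = \eta(Y) = 0$ since $X, Y \in {\mathcal C} = \ker(\eta)$. Second, $\phi$ restricts to $J$ on ${\mathcal C}$: from $\phi X = JX - \eta(X)\zeta$ we get $\phi X = JX$ and $\phi Y = JY$, both tangent to $M$. Third, both $S$ and $\bar{R}_{J\xi}$ are self-adjoint. The self-adjointness of $\bar{R}_{J\xi}$ is the one point worth isolating: since $\xi = -J\zeta$ gives $J\xi = \zeta$, the operator $\bar{R}_{J\xi}$ is exactly the normal Jacobi operator $\bar{R}_\zeta$, so its symmetry follows from the pair symmetry of the curvature tensor.

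For the first identity I would start from Proposition \ref{GRF}. The two terms $g(\bar{R}_{J\xi}\xi,JX)\eta(Y)$ and $g(\bar{R}_{J\xi}\xi,JY)\eta(X)$ vanish because $\eta(X) = \eta(Y) = 0$, so the right-hand side reduces to $g(\bar{R}_{J\xi}X,JY) - g(\bar{R}_{J\xi}Y,JX)$, which is already the claimed right-hand side. For the left-hand side I would move $S$ across each inner product using self-adjointness: $g((S\phi + \phi S)X,Y) = g(\phi X,SY) + g(\phi SX,Y) = (b+c)g(JX,Y)$ and $g(S\phi SX,Y) = g(\phi SX,SY) = bc\,g(JX,Y)$. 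Substituting these gives the factor $a(b+c) - 2bc$ multiplying $g(JX,Y)$, as asserted.

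For the second identity I would specialize Proposition \ref{GRF2} in the same way. Again $\eta(X) = \eta(Y) = 0$ annihilates the first four terms on the right, and the left-hand side becomes $da(\xi)(b+c)g(JX,Y)$ exactly as above. In the surviving four curvature terms I would replace $SX$ by $bX$ and $SY$ by $cY$, producing $3b\,g(\bar{R}_{J\xi}JY,JX) - 3c\,g(\bar{R}_{J\xi}JX,JY) + b\,g(\bar{R}_{J\xi}Y,X) - c\,g(\bar{R}_{J\xi}X,Y)$. Here the self-adjointness of $\bar{R}_{J\xi}$ does the work: it lets me rewrite $g(\bar{R}_{J\xi}JY,JX) = g(\bar{R}_{J\xi}JX,JY)$ and $g(\bar{R}_{J\xi}Y,X) = g(\bar{R}_{J\xi}X,Y)$, after which the four terms collapse to $3(b-c)g(\bar{R}_{J\xi}JX,JY) + (b-c)g(\bar{R}_{J\xi}X,Y)$.

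I do not anticipate a genuine obstacle; the content is bookkeeping once the three structural facts are in hand. The only place requiring care is the pairing in the self-adjointness step for the second identity, since mispairing $JX$ with $JY$ would alter the coefficient of the $g(\bar{R}_{J\xi}JX,JY)$ term. Keeping track of the identity $J\xi = \zeta$, which makes $\bar{R}_{J\xi}$ manifestly symmetric, is what keeps this step honest.
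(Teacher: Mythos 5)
Your proposal is correct and is exactly the paper's argument: the paper's proof of Corollary \ref{GRF3} simply says the identities follow by inserting $X,Y \in {\mathcal C}$ with $SX = bX$, $SY = cY$ into Propositions \ref{GRF} and \ref{GRF2}, and your bookkeeping (using $\eta(X)=\eta(Y)=0$, $\phi = J$ on ${\mathcal C}$, and self-adjointness of $S$ and of the Jacobi operator $\bar{R}_{J\xi}$) is precisely what that insertion amounts to.
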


\begin{proof}
The equations follow immediately when inserting $X,Y \in {\mathcal C}$ with $SX = b X$ and $SY = c Y$ into Proposition \ref{GRF} and Proposition \ref{GRF2}, respectively.
\end{proof}

We now turn our attention to the situation when the Reeb flow is an isometric flow and start with a useful characterization.

\begin{prop} \label{isomcomm}
Let $M$ be a connected orientable real hypersurface in a K\"{a}hler manifold $\bar{M}$. Then the Reeb flow on $M$ is an isometric flow if and only if $S\phi = \phi S$.
\end{prop}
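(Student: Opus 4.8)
The plan is to characterize the isometric Reeb flow condition through its infinitesimal generator and show that it reduces to the algebraic condition $S\phi = \phi S$. The Reeb flow is the flow of the vector field $\xi$, so it is isometric precisely when $\xi$ is a Killing vector field on $M$, that is, when $\mathcal{L}_\xi g = 0$. Since for any vector field the Lie derivative of the metric satisfies $(\mathcal{L}_\xi g)(X,Y) = g(\nabla_X\xi,Y) + g(X,\nabla_Y\xi)$, the first step is to rewrite this symmetric tensor in terms of $S$ and $\phi$, and then read off that its vanishing is equivalent to $S\phi = \phi S$.

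The key computation is to express $\nabla_X\xi$ for an arbitrary vector field $X$ on $M$. Using the Weingarten formula $\bar\nabla_X\zeta = -SX$ together with the parallelism $\bar\nabla J = 0$ of the K\"ahler structure, I would compute
\[
\nabla_X\xi = (\bar\nabla_X\xi)^{\top} = -(\bar\nabla_X J\zeta)^{\top} = -(J\bar\nabla_X\zeta)^{\top} = (JSX)^{\top} = \phi SX,
\]
where the last equality uses the definition $\phi Y = JY - \eta(Y)\zeta$, so that the tangential part of $JSX$ is exactly $\phi SX$. Substituting this into the Killing expression gives
\[
(\mathcal{L}_\xi g)(X,Y) = g(\phi SX,Y) + g(X,\phi SY).
\]

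The final step is to simplify the right-hand side using the skew-symmetry of $\phi$ with respect to $g$, namely $g(\phi U,V) = -g(U,\phi V)$, which follows from $\phi$ being the tangential restriction of the skew-adjoint $J$. This yields $g(X,\phi SY) = -g(\phi X, SY) = -g(S\phi X, Y)$ after using the self-adjointness of $S$, so that
\[
(\mathcal{L}_\xi g)(X,Y) = g(\phi SX,Y) - g(S\phi X,Y) = g((\phi S - S\phi)X,Y).
\]
Hence $\mathcal{L}_\xi g = 0$ for all $X,Y$ if and only if $\phi S - S\phi = 0$, which is the asserted equivalence. I do not expect any serious obstacle here; the only point requiring a little care is correctly handling the orthogonal projections onto $TM$ (the normal components drop out because $g(\zeta,X) = 0$ for tangent $X$), and keeping track of the signs coming from the skew-symmetry of $\phi$ versus the symmetry of $S$. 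The whole argument is a short, direct tensor computation once the formula $\nabla_X\xi = \phi SX$ is in hand.
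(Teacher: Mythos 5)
Your proof is correct and follows essentially the same route as the paper: both characterize the isometric Reeb flow by $\xi$ being a Killing field, compute $\nabla_X\xi = \phi SX$ via the Weingarten formula and $\bar\nabla J = 0$, and then use the skew-symmetry of $\phi$ together with the symmetry of $S$ to identify $\mathcal{L}_\xi g$ with $g((\phi S - S\phi)\,\cdot\,,\cdot\,)$. If anything, your sign bookkeeping in the final step is laid out more carefully than the paper's displayed chain of equalities, which reaches the same correct conclusion.
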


\begin{proof}
The Reeb flow on $M$ is an isometric flow if and only if $\xi$ is a Killing vector field. This is equivalent for $\nabla \xi$ to be a skew-symmetric tensor on $M$, which means
\begin{align*}
0 & = g(\nabla_X \xi, Y) + g(X,\nabla_Y\xi) = -g(\bar\nabla_X J\zeta,Y) - g(X,\bar\nabla_Y J\zeta) \\
& = -g(J\bar\nabla_X\zeta,Y) - g(X,J\bar\nabla_Y\zeta) = g(JSX,Y) - g(X,JSY) \\
& = g(\phi SX,Y) - g(X,\phi SY) = g((\phi S - S\phi)X,Y)
\end{align*}
for all vector fields $X,Y$ on $M$. This implies the assertion.
\end{proof}

If $S\phi = \phi S$ holds, then $0 = S\phi\xi = \phi S\xi$. From Proposition \ref{geodReeb} and Proposition \ref{isomcomm} we therefore get:

\begin{cor}\label{isomgeod}
Let $M$ be a connected orientable real hypersurface in a K\"{a}hler manifold $\bar{M}$. If the Reeb flow on $M$ is an isometric flow, then it is also a geodesic flow.
\end{cor}

The equation $S\phi = \phi S$ leads to further geometric properties of $M$. First of all, by differentiating $S\phi = \phi S$ we obtain the following result relating the shape operator $S$, the structure tensor $\phi$ and the normal Jacobi operator $\bar{R}_{J\xi}$.

\begin{prop}\label{IRF}
Let $M$ be a connected orientable real hypersurface with isometric Reeb flow in a K\"{a}hler manifold $\bar{M}$. Then
\[
  (S-a I)S\phi X = \bar{R}_{J\xi}J X - g(\bar{R}_{J\xi}J X,\xi)\xi = ( \bar{R}_{J\xi}JX  )_{\mathcal C} .
\]
\end{prop}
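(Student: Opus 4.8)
The plan is to extract the asserted vector identity by differentiating the Hopf condition $S\xi = a\xi$ and then feeding in the Codazzi equation; the full isometric hypothesis will enter decisively only at the very end, to control one stray derivative term.

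First I would differentiate $S\xi = a\xi$. Using the standard identity $\nabla_X\xi = \phi SX$ for real hypersurfaces in K\"{a}hler manifolds together with $S\phi = \phi S$ (Proposition \ref{isomcomm}), this yields
\[
(\nabla_X S)\xi = da(X)\xi - (S - aI)S\phi X ,
\]
so the left-hand side of the assertion is exactly $da(X)\xi - (\nabla_X S)\xi$, and it suffices to compute $(\nabla_X S)\xi$. For this I would invoke the Codazzi equation with $Y = \xi$, which as an identity of tangent vectors reads $(\nabla_X S)\xi - (\nabla_\xi S)X = -\bar{R}(X,\xi)\zeta$. Since $J\xi = \zeta$, the K\"{a}hler curvature identities give $\bar{R}(X,\xi)\zeta = \bar{R}_{J\xi}JX$, whence
\[
(\nabla_X S)\xi = (\nabla_\xi S)X - \bar{R}_{J\xi}JX .
\]
Combining the two displays gives $(S - aI)S\phi X = da(X)\xi - (\nabla_\xi S)X + \bar{R}_{J\xi}JX$, and the whole problem reduces to understanding the term $(\nabla_\xi S)X$.

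The hard part is to show that $(\nabla_\xi S)X \in \RR\xi$, and this is precisely where I expect to need the isometric (Killing) hypothesis and not merely the Hopf condition. Since the Reeb flow is isometric, $\xi$ is a Killing vector field, so its second covariant derivative obeys the identity $\nabla_Y\nabla_Z\xi - \nabla_{\nabla_Y Z}\xi = R(Y,\xi)Z$, where $R$ denotes the curvature tensor of $M$; taking $Y = \xi$ and $Z = X$ makes the right-hand side $R(\xi,\xi)X = 0$. On the other hand, differentiating $\nabla_X\xi = \phi SX$ along $\xi$ and using that, because $S\xi = a\xi$, one has $(\nabla_\xi\phi)(SX) = \eta(SX)S\xi - g(S\xi,SX)\xi = 0$, the same second derivative equals $\phi(\nabla_\xi S)X$. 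Hence $\phi(\nabla_\xi S)X = 0$, and since $\ker\phi = \RR\xi$ I conclude $(\nabla_\xi S)X \in \RR\xi$.

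Finally I would assemble the pieces. In the identity $(S-aI)S\phi X = da(X)\xi - (\nabla_\xi S)X + \bar{R}_{J\xi}JX$ the first two terms on the right lie in $\RR\xi$ by the previous step, while the left-hand side is orthogonal to $\xi$: indeed $(S-aI)\xi = 0$ and $(S-aI)S$ is self-adjoint, so $g((S-aI)S\phi X,\xi) = g(\phi X, S(S-aI)\xi) = 0$. Projecting the identity onto ${\mathcal C}$ therefore annihilates the $\RR\xi$ contributions and leaves $(S-aI)S\phi X = (\bar{R}_{J\xi}JX)_{\mathcal C}$, which is the assertion. The one genuinely substantive point is the containment $(\nabla_\xi S)X \in \RR\xi$; everything else is differentiation of the defining relations plus a bookkeeping use of Codazzi and the K\"{a}hler identities.
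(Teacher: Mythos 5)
Your proof is correct, but it takes a genuinely different route from the paper's. The paper differentiates the commutation relation $S\phi = \phi S$, takes inner products, runs a cyclic sum in $(X,Y,Z)$ with three applications of the Codazzi equation and the substitution $Z \mapsto \phi Z$, and thereby derives an explicit formula for the full covariant derivative $g((\nabla_XS)Y,Z)$ of the shape operator, which it then specializes at $Y=\xi$. You instead differentiate the Hopf relation $S\xi = a\xi$, apply Codazzi only once (at $Y=\xi$, trading $(\nabla_XS)\xi$ for $(\nabla_\xi S)X$ modulo the term $\bar{R}_{J\xi}JX$ via the K\"{a}hler identity $\bar{R}(JA,JB)=\bar{R}(A,B)$), and then eliminate the stray term using the second-derivative identity for Killing fields, whose right-hand side $R(Y,\xi)Z$ vanishes when $Y=\xi$; combined with $(\nabla_\xi\phi)(SX)=0$ (which follows from $S\xi=a\xi$ and the paper's formula $(\nabla_X\phi)Y = \eta(Y)SX - g(SX,Y)\xi$), this gives $\phi\bigl((\nabla_\xi S)X\bigr)=0$, hence $(\nabla_\xi S)X \in \RR\xi$, and the final projection onto ${\mathcal C}$ is legitimate because $(S-aI)S\phi X \perp \xi$. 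I checked the sign conventions: your form of Codazzi, $(\nabla_XS)\xi - (\nabla_\xi S)X = -\bar{R}(X,\xi)\zeta$, agrees with the form used in the proofs of Propositions \ref{GRF} and \ref{IRF4}, so the curvature term comes out with the correct sign. As for what each approach buys: the paper's longer computation yields, as a by-product, the complete expression for the tensor $\nabla S$ in terms of ambient curvature, which is more information than the proposition itself requires; your argument is substantially shorter and isolates precisely where the isometric (Killing) hypothesis enters beyond the Hopf condition, namely through the Kostant-type identity forcing $(\nabla_\xi S)X$ into $\RR\xi$.
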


\begin{proof}
Since $M$ has isometric Reeb flow, we have $S\phi = \phi S$ by Proposition \ref{isomcomm}. 
Differentiating $S\phi = \phi S$ covariantly we obtain
\[
(\nabla_XS)\phi Y + S(\nabla_X\phi)Y = (\nabla_X\phi)SY + \phi(\nabla_XS)Y.
\]
The tangential part of the K\"{a}hler condition $(\bar\nabla_XJ)Y = 0$ on $\bar{M}$ gives
\[
(\nabla_X\phi)Y = \eta(Y)SX - g(SX,Y)\xi.
\]
Inserting this into the previous equation leads to
\[
(\nabla_XS)\phi Y - \phi(\nabla_XS)Y = a g(SX,Y)\xi - g(SX,SY)\xi  + a\eta(Y)SX - \eta(Y)S^2X .
\]
Taking inner product with a vector field $Z$ tangent to $M$ gives
\begin{align*}
& g((\nabla_XS)Y,\phi Z) + g((\nabla_XS)Z,\phi Y) \\
& =  a\eta(Z)g(SX,Y) - \eta(Z)g(SX,SY) + a\eta(Y)g(SX,Z)  - \eta(Y)g(SX,SZ) ,
\end{align*}
which implies
\begin{align*}
& g((\nabla_XS)Y,\phi Z) + g((\nabla_XS)Z,\phi Y) + g((\nabla_YS)Z,\phi X) + g((\nabla_YS)X,\phi Z) \\
& \qquad - g((\nabla_ZS)X,\phi Y) - g((\nabla_ZS)Y,\phi X) \\
& = 2a\eta(Z)g(SX,Y) - 2\eta(Z)g(SX,SY) .
\end{align*}
The left-hand side of this equation can be rewritten as
\begin{align*}
& 2g((\nabla_XS)Y,\phi Z) - g((\nabla_XS)Y - (\nabla_YS)X,\phi Z) \\
& + g((\nabla_YS)Z - (\nabla_ZS)Y,\phi X) - g((\nabla_ZS)X - (\nabla_XS)Z,\phi Y), 
\end{align*}
and using the Codazzi equation this implies
\begin{align*}
2g((\nabla_X S)Y,\phi Z)
& = 2a\eta(Z)g(SX,Y) - 2\eta(Z)g(SX,SY) \\
& \qquad + g(\bar{R}(X,Y)\phi Z,J\xi) - g(\bar{R}(Y,Z)\phi X,J\xi) + g(\bar{R}(Z,X)\phi Y,J\xi).
\end{align*}
Replacing $Z$ by $\phi Z$, the left-hand side of the previous equation becomes
\[
- 2g((\nabla_XS)Y,Z) + 2\eta(Z)g((\nabla_XS)Y, \xi).
\]
Since
\[ g((\nabla_XS)Y, \xi) = g((\nabla_XS)\xi, Y) =
g(\nabla_X(a\xi),Y) - g(S\nabla_X\xi,Y),
\]
we see that
\[
2da(X)\eta(Y)\eta(Z) + 2a\eta(Z)g(\phi SX,Y) - 2\eta(Z) g(S\phi SX,Y) - 2g((\nabla_XS)Y,Z)
\]
becomes the left-hand side of the previous equation when we replace $Z$ by $\phi Z$.
Replacing $Z$ by $\phi Z$ also on the right-hand side we get
\begin{align*}
& 2da(X)\eta(Y)\eta(Z) + 2a\eta(Z)g(\phi SX,Y) - 2\eta(Z) g(S\phi SX,Y) - 2g((\nabla_XS)Y,Z) \\
& \quad = g(\bar{R}(X,Y)\phi^2 Z,J\xi) - g(\bar{R}(Y,\phi Z)\phi X,J\xi) + g(\bar{R}(\phi Z,X)\phi Y,J\xi) \\
& \quad = - g(\bar{R}(X,Y)J\xi,\phi^2 Z) - g(\bar{R}(\phi X,J\xi)Y,\phi Z) - g(\bar{R}(\phi Y,J\xi)X,\phi Z) \\
& \quad =  g(\bar{R}(X,Y)J\xi,Z) - \eta(Z)g(\bar{R}(X,Y)J\xi,\xi) \\
& \qquad \quad - g(\bar{R}(\phi X,J\xi)Y,\phi Z) - g(\bar{R}(\phi Y,J\xi)X,\phi Z),
\end{align*}
or equivalently,
\begin{align*}
2g((\nabla_XS)Y,Z) 
& = 2da(X)\eta(Y)\eta(Z) + 2a\eta(Z)g(\phi SX,Y) - 2\eta(Z) g(S\phi SX,Y) \\
& \quad - g(\bar{R}(X,Y)J\xi,Z) + \eta(Z)g(\bar{R}(X,Y)J\xi,\xi) \\
& \quad + g(\bar{R}(\phi X,J\xi)Y,\phi Z) + g(\bar{R}(\phi Y,J\xi)X,\phi Z).
\end{align*}
Since $JZ = \phi Z + \eta(Z)J\xi$, this implies
\begin{align*}
2g((\nabla_XS)Y,Z) 
& = \left\{ 2da(X)\eta(Y) + 2a g(\phi SX,Y) - 2 g(S\phi SX,Y) \right. \\
& \qquad \left.  - g(\bar{R}(X,Y)\xi,J\xi) - g(\bar{R}(\phi X,J\xi)Y,J\xi) - g(\bar{R}(\phi Y,J\xi)X,J\xi) \right\} \eta(Z) \\
& \quad - g(\bar{R}(X,Y)J\xi,Z) - g(J\bar{R}(\phi X,J\xi)Y,Z) - g(J\bar{R}(\phi Y,J\xi)X,Z) .
\end{align*}
Since $\bar{M}$ is a K\"{a}hler manifold, we have
\begin{align*}
g(J\bar{R}(\phi X,J\xi)Y,Z) & = g(\bar{R}(\phi X,J\xi)JY,Z) \\ 
& = g(\bar{R}(\phi X,J\xi)\phi Y,Z) + \eta(Y)g(\bar{R}(\phi X,J\xi)J\xi,Z).
\end{align*}
The previous equation then becomes
\begin{align*}
2g((\nabla_XS)Y,Z) 
& = \left\{ 2da(X)\eta(Y) + 2a g(\phi SX,Y) - 2 g(S\phi SX,Y) \right. \\
& \qquad \left.  - g(\bar{R}(X,Y)\xi,J\xi) - g(\bar{R}(\phi X,J\xi)Y,J\xi) - g(\bar{R}(\phi Y,J\xi)X,J\xi) \right\} \eta(Z)  \\
& \qquad - \eta(Y)g(\bar{R}(\phi X,J\xi)J\xi,Z) - \eta(X)g(\bar{R}(\phi Y,J\xi)J\xi,Z) \\
& \quad - g(\bar{R}(X,Y)J\xi,Z) - g(\bar{R}(\phi X,J\xi)\phi Y,Z) - g(\bar{R}(\phi Y,J\xi)\phi X,Z).
\end{align*}
Inserting $Y = \xi$ and using basic curvature identities for K\"{a}hler manifolds, we obtain
\begin{align*}
2g((\nabla_XS)\xi,Z) 
& = \left\{ 2da(X)  - g(\bar{R}(X,\xi)\xi,J\xi) + g(\bar{R}(\phi X,J\xi)J\xi,\xi) \right\} \eta(Z) \\
& \qquad - g(\bar{R}(\phi X,J\xi)J\xi,Z) - g(\bar{R}(X,\xi)J\xi,Z)  \\
& = \left\{ 2da(X)  + g(\bar{R}(JX,J\xi)J\xi,\xi) + g(\bar{R}(\phi X,J\xi)J\xi,\xi) \right\} \eta(Z) \\
& \qquad - g(\bar{R}(\phi X,J\xi)J\xi,Z) - g(\bar{R}(JX,J\xi)J\xi,Z)  \\
& = \left\{ 2da(X)  + 2g(\bar{R}(\phi X,J\xi)J\xi,\xi) \right\} \eta(Z) - 2g(\bar{R}(\phi X,J\xi)J\xi,Z) .
\end{align*}
On the other hand, we have
\[
(\nabla_XS)\xi
= da(X)\xi + a\nabla_X\xi - S\nabla_X\xi
= da(X)\xi + a S \phi X - S^2 \phi X.
\]
Inserting this into the previous equation implies
\[
a g( S \phi X,Z)  - g(S^2 \phi X,Z)  = g(\bar{R}(\phi X,J\xi)J\xi,\xi)  \eta(Z) - g(\bar{R}(\phi X,J\xi)J\xi,Z),
\]
which implies the assertion.
\end{proof}

The next result provides useful information on the eigenspaces of the normal Jacobi operator if the Reeb flow is an isometric flow.

\begin{prop}\label{IRF2}
Let $M$ be a connected orientable real hypersurface with isometric Reeb flow in a K\"{a}hler manifold $\bar{M}$. Then we have 
\[
 g(\bar{R}_{J\xi}X,Y) = g(\bar{R}_{J\xi}JX,JY) 
\]
for all $X,Y \in {\mathcal C}$.
\end{prop}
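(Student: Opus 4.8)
The plan is to reduce the curvature identity to an algebraic statement about the shape operator via Proposition~\ref{IRF}, and then to exploit the fact that $S$ and $J$ commute on $\mathcal{C}$.

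First I would record the two structural facts on $\mathcal{C}$ that drive the whole argument. Since $S\xi = a\xi$ and $S$ is symmetric, $S$ preserves $\mathcal{C} = \ker(\eta)$; moreover $\phi X = JX$ for $X \in \mathcal{C}$ and $\mathcal{C}$ is $J$-invariant. Combining these with $S\phi = \phi S$ (Proposition~\ref{isomcomm}) gives $SJX = JSX$ for all $X \in \mathcal{C}$, that is, $S$ and hence every polynomial in $S$, in particular the symmetric operator $A := (S-aI)S$, commutes with $J$ on $\mathcal{C}$.

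Next I would feed $X \in \mathcal{C}$ into Proposition~\ref{IRF}. Because $\phi X = JX$ there, the proposition reads $(S-aI)SJX = (\bar{R}_{J\xi}JX)_{\mathcal{C}}$. Pairing with $Y \in \mathcal{C}$ and using $\eta(Y) = 0$ annihilates the $\xi$-component in the projection, so that $g(AJX,Y) = g(\bar{R}_{J\xi}JX,Y)$. Replacing $X$ by $-JX \in \mathcal{C}$ (so that $J(-JX) = X$ and $\phi(-JX) = X$) produces the companion identity $g(AX,Y) = g(\bar{R}_{J\xi}X,Y)$, valid for all $X,Y \in \mathcal{C}$.

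Finally I would simply compute, using that $JY \in \mathcal{C}$, that $A$ commutes with $J$ on $\mathcal{C}$, and that $J$ is a $g$-isometry:
\[
g(\bar{R}_{J\xi}JX,JY) = g(AJX,JY) = g(JAX,JY) = g(AX,Y) = g(\bar{R}_{J\xi}X,Y),
\]
which is the assertion. There is essentially no hard analytic step, since Proposition~\ref{IRF} has already carried out the differentiation of $S\phi = \phi S$; the only points requiring care are the bookkeeping that distinguishes $\phi$ from $J$ on $\mathcal{C}$ versus in the $\xi$-direction, the sign in the substitution $X \mapsto -JX$, and the verification that $S$ preserves $\mathcal{C}$ so that the commutation $SJ = JS$ on $\mathcal{C}$ is legitimate.
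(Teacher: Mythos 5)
Your proof is correct, but it takes a genuinely different route from the paper's. The paper plays two identities against each other: it inserts $Y = JX$ (for $X \in \mathcal{C}$) into Proposition~\ref{GRF}, the Codazzi-based identity, and uses $S\phi = \phi S$ to write the left-hand side as $2a\,g(SJX,JX) - 2g(S^2JX,JX)$; it then computes the same quantity from Proposition~\ref{IRF}, and comparing the two results yields the diagonal identity $g(\bar{R}_{J\xi}X,X) = g(\bar{R}_{J\xi}JX,JX)$, from which the bilinear statement follows by polarization (replacing $X$ by $X+Y$, using the symmetry of the Jacobi operator). You never touch Proposition~\ref{GRF}: you read Proposition~\ref{IRF} as saying that the compression of $\bar{R}_{J\xi}$ to $\mathcal{C}$ coincides with $A = (S-aI)S$ restricted to $\mathcal{C}$, note that $A$ commutes with $J$ there (legitimately: isometric implies geodesic Reeb flow by Corollary~\ref{isomgeod}, so $S\xi = a\xi$, $S$ preserves $\mathcal{C}$, and $\phi = J$ on $\mathcal{C}$), and conclude from the $J$-invariance of the metric via the chain $g(\bar{R}_{J\xi}JX,JY) = g(AJX,JY) = g(JAX,JY) = g(AX,Y) = g(\bar{R}_{J\xi}X,Y)$. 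Your substitutions $X \mapsto -JX$ and $(X,Y)\mapsto(JX,JY)$ stay inside $\mathcal{C}$, so every step checks out. What your argument buys is economy and transparency: the proposition is exposed as a purely algebraic consequence of Proposition~\ref{IRF} together with $S\phi = \phi S$, with no second curvature identity and no polarization, and it makes the structural reason visible, namely that the compressed normal Jacobi operator is a polynomial in $S$ and hence commutes with $J$ on $\mathcal{C}$. What the paper's route offers in exchange is that it exercises both of the previously derived identities \ref{GRF} and \ref{IRF} and only needs Proposition~\ref{IRF} on the diagonal, but logically your shorter path is self-contained and equally rigorous.
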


\begin{proof}
For $X \in {\mathcal C}$ we have $\phi X = JX$. From Proposition \ref{GRF} we get
\[
2a g(S J X,J X) - 2g(S^2J X,J X) 
=  - g(\bar{R}_{J\xi}X,X) - g(\bar{R}_{J\xi}JX,JX)  ,
\]
and from Proposition \ref{IRF} we get
\[
2a g(SJX,J X) - 2g(S^2JX,JX) = 
- 2g((S-a I)SJX,JX) = - 2g(\bar{R}_{J\xi}J X,JX) .
\]
Comparing both equations leads to $g(\bar{R}_{J\xi}X,X) = g(\bar{R}_{J\xi}JX,JX)$ for all $X \in {\mathcal C}$.
For $X,Y \in {\mathcal C}$ we therefore get
\begin{align*}
& 2g(\bar{R}_{J\xi}X,Y) + g(\bar{R}_{J\xi}X,X) + g(\bar{R}_{J\xi}Y,Y) \\
& = g(\bar{R}_{J\xi}(X+Y),X+Y) = g(\bar{R}_{J\xi}J(X+Y),J(X+Y)) \\
& = g(\bar{R}_{J\xi}(JX+JY),JX+JY) \\
& = 2g(\bar{R}_{J\xi}JX,JY) + g(\bar{R}_{J\xi}JX,JX) + g(\bar{R}_{J\xi}JY,JY)
\end{align*}
Since $g(\bar{R}_{J\xi}X,X) = g(\bar{R}_{J\xi}JX,JX)$ and $g(\bar{R}_{J\xi}Y,Y) = g(\bar{R}_{J\xi}JY,JY) $, the assertion follows.
\end{proof}

From Proposition \ref{IRF2} and Corollary \ref{GRF3} we obtain more information on the principal curvatures.

\begin{cor}\label{IRF3} 
Let $M$ be a connected orientable real hypersurface with isometric Reeb flow in an Hermitian symmetric space $\bar{M}$. Let $X,Y \in {\mathcal C}$ with $SX = b X$ and $SY = c Y$. Then we have
\[
(a (b + c) - 2bc)g(JX,Y)  = 2g(\bar{R}_{J\xi}X,JY) 
\]
and
\[
(b+c) da(\xi)g(JX,Y) = 4(b-c) g(\bar{R}_{J\xi}X,Y)  .
\]
\end{cor}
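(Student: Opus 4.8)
The plan is to derive both identities directly from Corollary \ref{GRF3}, whose right-hand sides I only need to simplify using the extra symmetry that becomes available under the isometric Reeb flow condition. The starting point is that an isometric Reeb flow is in particular a geodesic Reeb flow by Corollary \ref{isomgeod}, so for $X,Y \in {\mathcal C}$ with $SX = bX$ and $SY = cY$ the two equations of Corollary \ref{GRF3} hold verbatim. The entire task is therefore to massage the curvature terms on their right-hand sides into the stated forms, and the only new ingredient is Proposition \ref{IRF2}, together with two standard facts: that ${\mathcal C}$ is $J$-invariant (so $JX, JY \in {\mathcal C}$ and Proposition \ref{IRF2} may be applied to any pair drawn from $X, Y, JX, JY$), and that the normal Jacobi operator $\bar{R}_{J\xi}$ is self-adjoint, which follows from the pair symmetry of the curvature tensor.

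For the first identity I would start with the right-hand side $g(\bar{R}_{J\xi}X,JY) - g(\bar{R}_{J\xi}Y,JX)$ of the first equation in Corollary \ref{GRF3} and show that its two terms coincide. Applying Proposition \ref{IRF2} to the pair $(X, JY)$ and using $J^2 = -I$ gives $g(\bar{R}_{J\xi}X,JY) = g(\bar{R}_{J\xi}JX, J^2 Y) = -g(\bar{R}_{J\xi}JX,Y)$, and the self-adjointness of $\bar{R}_{J\xi}$ rewrites this last term as $-g(\bar{R}_{J\xi}Y,JX)$. Hence $g(\bar{R}_{J\xi}X,JY) = -g(\bar{R}_{J\xi}Y,JX)$, so the right-hand side collapses to $2g(\bar{R}_{J\xi}X,JY)$, which is exactly the first assertion.

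For the second identity I would work with the right-hand side $3(b-c)g(\bar{R}_{J\xi}JX,JY) + (b-c)g(\bar{R}_{J\xi}X,Y)$ of the second equation in Corollary \ref{GRF3}. A single application of Proposition \ref{IRF2} to the pair $(X,Y)$ yields $g(\bar{R}_{J\xi}JX,JY) = g(\bar{R}_{J\xi}X,Y)$, so the two terms combine into $4(b-c)g(\bar{R}_{J\xi}X,Y)$, which is the second assertion.

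There is no serious obstacle here: the result is essentially a bookkeeping exercise that substitutes the symmetry relation of Proposition \ref{IRF2} into Corollary \ref{GRF3}. The only points requiring a little care are keeping track of the sign produced by $J^2 = -I$ and remembering to invoke the self-adjointness of $\bar{R}_{J\xi}$ in the first identity, since it is precisely this symmetry that allows the two \emph{a priori} different curvature terms to be identified.
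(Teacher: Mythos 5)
Your proposal is correct and takes the same route as the paper: the paper derives Corollary \ref{IRF3} precisely by combining Corollary \ref{GRF3} (valid since isometric implies geodesic Reeb flow) with the symmetry $g(\bar{R}_{J\xi}X,Y) = g(\bar{R}_{J\xi}JX,JY)$ of Proposition \ref{IRF2}, leaving the substitution implicit. Your filling-in of the details — applying Proposition \ref{IRF2} to the pair $(X,JY)$ together with the self-adjointness of $\bar{R}_{J\xi}$ for the first identity, and to $(X,Y)$ for the second — is exactly the intended computation and is carried out correctly.
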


The next result provides a characterization for the constancy of the principal curvature function $a$ in terms of a simple condition on the normal Jacobi operator. 

\begin{prop} \label{IRF4}
Let $M$ be a connected orientable real hypersurface with isometric Reeb flow in an irreducible Hermitian symmetric space $\bar{M}$. Then $a$ is constant if and only if $\xi$ is an eigenvector of $\bar{R}_{J\xi}$.
\end{prop}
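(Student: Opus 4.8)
The plan is to recast the eigenvector condition as a condition on $\grad^M a$, and then to show that this gradient condition forces $a$ to be constant. Recall from the proof of Proposition \ref{GRF2} that
\[
\grad^M a = da(\xi)\xi + J\bar{R}_{J\xi}\xi - g(\bar{R}_{J\xi}\xi,\xi)J\xi .
\]
Since $J\xi = \zeta$ is normal to $M$ and the normal part of $J\bar{R}_{J\xi}\xi$ equals $g(J\bar{R}_{J\xi}\xi,\zeta)\zeta = g(\bar{R}_{J\xi}\xi,\xi)\zeta$, the vector $\grad^M a - da(\xi)\xi$ is exactly the tangential part of $J\bar{R}_{J\xi}\xi$. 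Hence $\grad^M a \in \RR\xi$ if and only if $J\bar{R}_{J\xi}\xi$ is normal, which by applying $J$ is equivalent to $\bar{R}_{J\xi}\xi \in \RR\xi$, i.e. to $\xi$ being an eigenvector of $\bar{R}_{J\xi}$. In particular, if $a$ is constant then $\grad^M a = 0 \in \RR\xi$, so $\xi$ is an eigenvector; this settles one implication. It remains to prove the converse: if $\grad^M a = da(\xi)\xi$ everywhere, then $a$ is constant.

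Write $\alpha = da(\xi)$, so $\grad^M a = \alpha\xi$. First I would exploit the symmetry of the Hessian. Using $\nabla_X\xi = \phi SX$ one finds $\hess^a(X,Y) = (X\alpha)\eta(Y) + \alpha g(\phi SX,Y)$. Because $M$ has isometric Reeb flow we have $S\phi = \phi S$ by Proposition \ref{isomcomm}, whence $g(\phi SY,X) = -g(\phi SX,Y)$. Subtracting $\hess^a(Y,X)$ therefore gives $(X\alpha)\eta(Y) - (Y\alpha)\eta(X) + 2\alpha g(\phi SX,Y) = 0$. Putting $Y = \xi$ yields $\grad^M\alpha = (\xi\alpha)\xi$, and feeding this back into the identity leaves $\alpha\,g(\phi SX,Y) = 0$ for all $X,Y$, that is, $\alpha\,\phi S = 0$ on all of $M$.

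Let $U = \{p \in M : \alpha(p) \neq 0\}$, an open set on which $\phi S = 0$, equivalently $SX = a\eta(X)\xi$, so that the maximal complex subbundle ${\mathcal C}$ lies in $\ker S$. On $U$ Proposition \ref{IRF} then gives $(\bar{R}_{J\xi}JX)_{\mathcal C} = (S-aI)S\phi X = 0$ for all $X$; since ${\mathcal C}$ is $J$-invariant this says $\bar{R}_{J\xi}Y \perp {\mathcal C}$ for all $Y \in {\mathcal C}$. Combining this with $\bar{R}_{J\xi}\xi \in \RR\xi$ and with $\bar{R}_{J\xi}\zeta = \bar{R}(\zeta,J\xi)J\xi = 0$ (because $\zeta = J\xi$), self-adjointness of $\bar{R}_{J\xi}$ forces $\bar{R}_{J\xi}|_{\mathcal C} = 0$; that is, the normal Jacobi operator has rank at most one on $U$.

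The final step, and the point where irreducibility must genuinely enter, is to rule this out. I would translate it into a statement about holomorphic bisectional curvature: a short computation with the first Bianchi identity and the K\"{a}hler identities gives $g(\bar{R}(\zeta,J\zeta)JW,W) = g(\bar{R}_{J\xi}W,W) + g(\bar{R}_{J\xi}JW,JW)$, so $\bar{R}_{J\xi}|_{\mathcal C} = 0$ would make the nonnegative holomorphic bisectional curvature form $W \mapsto g(\bar{R}(\zeta,J\zeta)JW,W)$ vanish identically on ${\mathcal C}$ while remaining positive on $\RR\xi$, where its value is the holomorphic sectional curvature $g(\bar{R}_{J\xi}\xi,\xi) > 0$. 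For an irreducible Hermitian symmetric space of compact type of complex dimension at least two this is impossible: the radical of this form cannot contain all of ${\mathcal C}$. I regard establishing this nondegeneracy as the main obstacle; I would prove it in the algebraic model $\cp^{\CC} = \cp^+ \oplus \cp^-$ from Borel--de Siebenthal theory, where, writing $Z \in \cp^+$ for the $(1,0)$-part of $\zeta$, the bisectional form becomes the positive semidefinite Hermitian form $W \mapsto \langle \ad([\bar{Z},Z])W,W\rangle$ on $\cp^+$, whose kernel is controlled by the noncompact roots orthogonal to $Z$ and is strictly smaller than ${\mathcal C}$ once $\dim_{\CC}\bar{M} \geq 2$. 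Granting this, $U = \emptyset$, hence $\alpha \equiv 0$ and $\grad^M a = 0$; since $M$ is connected, $a$ is constant.
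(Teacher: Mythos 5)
Everything up to your final paragraph is correct, and it is a genuine streamlining of the paper's route: because you may assume $\grad^M a = \alpha\xi$ in the nontrivial direction, the bare symmetry of $\hess^a$ already yields $\alpha\,\phi S = 0$, replacing the paper's unconditional derivation of $b\,da(\xi)=0$ via Proposition \ref{GRF2} and Corollary \ref{IRF3}. Your conclusions on $U=\{\alpha\neq 0\}$ --- that $S|_{\mathcal C}=0$, hence $(\bar{R}_{J\xi}JX)_{\mathcal C}=0$ by Proposition \ref{IRF}, hence $\bar{R}_{J\xi}|_{\mathcal C}=0$ by self-adjointness together with the eigenvector hypothesis --- are all valid, as is the bisectional curvature identity.

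The genuine gap is the final nondegeneracy claim, which is exactly where irreducibility must enter and which you assert rather than prove: you write ``Granting this'' and defer to an unspecified root-space computation. As sketched this is not a proof; $\zeta$ is an arbitrary unit vector, not a root vector, so ``the noncompact roots orthogonal to $Z$'' do not directly control the kernel --- one would need the restricted-root decomposition relative to a maximal flat through $\zeta$. The claim is true, and it can be closed with tools already in the paper, with no representation theory: since $\bar{R}(X,Y)Z=-[[X,Y],Z]$ and $\bar{M}$ is of compact type, $g(\bar{R}_{J\xi}X,X)=\|[\zeta,X]\|^2$, so $\ker \bar{R}_{J\xi}|_{\cp}$ is the centralizer $\{X\in\cp : [\zeta,X]=0\}$, which is a Lie triple system; if it contained ${\mathcal C}_p$ it would equal ${\mathcal C}_p\oplus\RR\zeta_p$ (it cannot contain $\xi_p$, because $[\zeta,J\zeta]\neq 0$: otherwise $\CC\zeta_p$ would lie in a maximal flat, which is totally real), so $\bar{M}$ would admit a totally geodesic real hypersurface, contradicting $i(\bar{M})\geq 2$ (Theorem \ref{index2}). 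Note also that your contradiction is formulated for compact type only, while the proposition and the paper's proof (which instead shows $da(\xi)=0$ unconditionally via the Codazzi equation, curvature-invariance of ${\mathcal C}$, the bound $i(\bar{M})\leq 2$, and the known classifications in $\CC P^r$, the quadrics and their noncompact duals) cover the noncompact case as well; your argument dualizes with a sign change, but this must be said. Once the nondegeneracy step is actually proved, your argument is complete and, unlike the paper's, needs no external classification results.
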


\begin{proof}
Recall that $S\phi = \phi S$ when $M$ has isometric Reeb flow. In Corollary \ref{IRF3} we can thus choose $Y = JX$ and $c = b$. If also $\|X \| = 1$, we get $b^2  - a b  -g(\bar{R}_{J\xi}X,X) = 0$ and $b da(\xi) = 0 $.

We first assume that $da(\xi) \neq 0$ at some point $p \in M$. Then, by continuity, $da(\xi) \neq 0$ on an open neighborhood $U$ of $p$ in $M$. The following calculations are valid on $U$. From $b da(\xi) = 0$ we see that $SX = 0$ for all $X \in {\mathcal C}$. From Corollary \ref{IRF3} we then get $\bar{R}_{J\xi}({\mathcal C}) = \RR \xi$. Now consider the Codazzi equation $g(\bar{R}(X,Y)Z,J\xi) = g((\nabla_XS)Y,Z) - g((\nabla_YS)X,Z)$. For $X,Y,Z \in {\mathcal C}$ we get $g((\nabla_XS)Y,Z) = g(\nabla_XSY,Z) - g(S\nabla_XY,Z) = 0$
and hence $g(\bar{R}(X,Y)Z,J\xi) = 0$. We also get $g(\bar{R}(X,Y)Z,\xi) = g(\bar{R}(X,Y)JZ,J\xi) = g((\nabla_XS)Y,JZ) - g((\nabla_YS)X,JZ) = 0$. Altogether this implies that $\bar{R}({\mathcal C},{\mathcal C}){\mathcal C} \subset {\mathcal C}$. In other words, ${\mathcal C}$ is curvature-invariant at each point in $U$. Thus there exists a totally geodesic submanifold $\Sigma$ of $\bar{M}$ with $T_p\Sigma = {\mathcal C}_p$ (see e.g.\ Theorem 10.3.3 in \cite{BCO16}). Since ${\mathcal C}$ is $J$-invariant, $\Sigma$ is a totally geodesic complex hypersurface in $\bar{M}$. It follows that the index of $\bar{M}$ is at most $2$ (see \cite{BO17}). The only irreducible Hermitian symmetric spaces with index $\leq 2$ are complex projective spaces, complex quadrics and their noncompact dual symmetric spaces (see \cite{BO17}), which all admit totally geodesic complex hypersurfaces. In all these cases it is known that $a$ is constant (see \cite{BS13, MR86, Ok75, Su17}). The assumption $da(\xi) \neq 0$ therefore leads to a contradiction. Thus we must have $da(\xi) = 0$. 

Since $da(X) = da(\xi)\eta(X) - g(\bar{R}_{J\xi}\xi,JX)$ for all $X \in TM$ (see proof of Proposition \ref{GRF}), this implies $da(X) = - g(\bar{R}_{J\xi}\xi,JX)$ for all $X \in TM$. This finishes the proof.
\end{proof}

We now use the previous characterization to show that $a$ is constant when $M$ has isometric Reeb flow. 

\begin{prop} \label{IRF5}
Let $M$ be a connected orientable real hypersurface with isometric Reeb flow in an irreducible Hermitian symmetric space $\bar{M}$. Then $a$ is constant.
\end{prop}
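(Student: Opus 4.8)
By Proposition \ref{IRF4} it suffices to show that $\xi$ is an eigenvector of the normal Jacobi operator $\bar{R}_{J\xi}$ at every point. Since $\bar{R}_{J\xi}\xi = \bar{R}(\xi,J\xi)J\xi$ is orthogonal to $J\xi = \zeta$, it lies in $T_pM$, so I would write $\bar{R}_{J\xi}\xi = \lambda\xi + V$ with $\lambda = g(\bar{R}_{J\xi}\xi,\xi)$ and $V = (\bar{R}_{J\xi}\xi)_{\mathcal C} \in {\mathcal C}$; the assertion is then equivalent to $V = 0$. The plan is to assume $V \neq 0$ on some nonempty open subset and to derive a contradiction from the positive semidefiniteness of $\bar{R}_{J\xi}$, which holds because an irreducible Hermitian symmetric space of compact type has nonnegative sectional curvature.

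From the proof of Proposition \ref{IRF4} we already know that $da(\xi) = 0$, hence $da(X) = -g(\bar{R}_{J\xi}\xi,JX)$ for all $X \in TM$. The key step is to compute $\hess^a(X,\xi)$ for $X \in {\mathcal C}$ in two ways. On one hand, the definition of the Hessian together with $da(\xi) = 0$ and $\nabla_X\xi = \phi SX$ gives $\hess^a(X,\xi) = -da(\phi SX) = -g(V,SX)$, where I used $J\phi SX = -SX$ and $g(\bar{R}_{J\xi}\xi,SX) = g(V,SX)$. On the other hand, the intermediate identity established in the proof of Proposition \ref{GRF2} reads, for $X \in {\mathcal C}$ (so $\eta(X)=0$),
\[
\hess^a(X,\xi) = -da(\phi SX) + g(\bar{R}_{J\xi}\xi,SX) - a\,g(\bar{R}_{J\xi}\xi,X) = -a\,g(V,X),
\]
since the first two terms cancel. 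Comparing the two expressions yields $g(SV,X) = a\,g(V,X)$ for all $X \in {\mathcal C}$, and as $SV,aV \in {\mathcal C}$ this forces $SV = aV$.

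Once $SV = aV$ is available, Proposition \ref{IRF} applied to $V \in {\mathcal C}$ gives $(\bar{R}_{J\xi}V)_{\mathcal C} = (S - aI)SV = 0$. Because the image of $\bar{R}_{J\xi}$ is orthogonal to $\zeta$ and $g(\bar{R}_{J\xi}V,\xi) = g(V,\bar{R}_{J\xi}\xi) = \|V\|^2$, this forces $\bar{R}_{J\xi}V = \|V\|^2\xi$. Therefore, in the orthonormal basis $\{\xi, V/\|V\|\}$ of the plane spanned by $\xi$ and $V$, the symmetric bilinear form $(u,w) \mapsto g(\bar{R}_{J\xi}u,w)$ has matrix
\[
\begin{pmatrix} \lambda & \|V\| \\ \|V\| & 0 \end{pmatrix},
\]
with determinant $-\|V\|^2 < 0$. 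This contradicts the positive semidefiniteness of $\bar{R}_{J\xi}$. Hence $V = 0$ everywhere, $\xi$ is an eigenvector of $\bar{R}_{J\xi}$, and $a$ is constant by Proposition \ref{IRF4}.

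The only real obstacle is to discover the two-way computation of $\hess^a(X,\xi)$ producing $SV = aV$: this relation does not follow from Corollary \ref{IRF3} or Proposition \ref{GRF2} in isolation, since all of those identities degenerate once $da(\xi) = 0$, and it is exactly $SV = aV$ together with the sign of the curvature that forbids a nontrivial ${\mathcal C}$-component of $\bar{R}_{J\xi}\xi$. I expect the verification of the two Hessian formulas and the bookkeeping of the various $\phi$, $J$ and $\eta$ terms to be the only delicate point; everything after $SV = aV$ is a short linear-algebra argument.
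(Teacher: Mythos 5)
Your argument is correct (for compact type) and it takes a genuinely different route from the paper's. The paper also reduces everything to Proposition \ref{IRF4} via $da(\xi)=0$, but it then claims that inserting $X=\xi$ into the displayed equation of Proposition \ref{GRF2} yields $0=a\,\bigl(g(\bar{R}_{J\xi}\xi,Y)-\eta(Y)g(\bar{R}_{J\xi}\xi,\xi)\bigr)$, i.e.\ $aV=0$ in your notation; this gives $\bar{R}_{J\xi}\xi\in\RR\xi$ on the open set where $a\neq 0$, Proposition \ref{IRF4} applies there, and a connectedness argument finishes the proof -- no curvature sign and no Proposition \ref{IRF} are needed. Your closing remark that these identities degenerate once $da(\xi)=0$ is exactly right, and it applies to the paper's own substitution: with $S\xi=a\xi$, $\bar{R}_{J\xi}J\xi=0$, $g(\bar{R}_{J\xi}JY,J\xi)=0$ and the symmetry of $\bar{R}_{J\xi}$, the eight terms on the right-hand side of Proposition \ref{GRF2} either vanish or cancel in pairs when $X=\xi$, so that substitution only returns $0=0$. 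The usable information sits one step earlier, in the Hessian identity inside the proof of Proposition \ref{GRF2}; combined with the pointwise computation $\hess^a(X,\xi)=-da(\phi SX)$ (valid because $da(\xi)\equiv 0$), it gives $g(\bar{R}_{J\xi}\xi,SX)=a\,g(\bar{R}_{J\xi}\xi,X)$, i.e.\ your $SV=aV$ -- which, unlike $aV=0$, does not by itself force $V=0$. That is precisely why your extra steps are needed, and they are sound: taking $X=-JV$ in Proposition \ref{IRF} gives $(\bar{R}_{J\xi}V)_{\mathcal C}=(S-aI)SV=0$, hence $\bar{R}_{J\xi}V=\|V\|^2\xi$, and semidefiniteness of $\bar{R}_{J\xi}$ kills $V$ pointwise. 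So your route costs more machinery (Proposition \ref{IRF} plus the sign of the curvature), but it buys a pointwise conclusion with no case distinction on $a$ and no connectedness step, and it supplies exactly the input that the shortcut in the paper's proof is missing.

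One gap to close: Proposition \ref{IRF5} is stated for an arbitrary irreducible Hermitian symmetric space, while you justify positive semidefiniteness of $\bar{R}_{J\xi}$ only for compact type. For noncompact type the sectional curvature is nonpositive, so $\bar{R}_{J\xi}$ is negative semidefinite; since Cauchy--Schwarz for a semidefinite symmetric bilinear form of either sign gives $g(\bar{R}_{J\xi}\xi,V)^2\le g(\bar{R}_{J\xi}\xi,\xi)\,g(\bar{R}_{J\xi}V,V)=0$, the contradiction with $g(\bar{R}_{J\xi}\xi,V)=\|V\|^2>0$ persists verbatim. As an irreducible Hermitian symmetric space is of compact or of noncompact type, adding this one sentence completes your proof in the stated generality.
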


\begin{proof}
We saw in the proof of Proposition \ref{IRF4} that $da(\xi) = 0$. Inserting $X = \xi$ into the equation in Proposition \ref{GRF2} implies
$0 =   a  (g(\bar{R}_{J\xi}\xi,Y) - \eta(Y)  g(\bar{R}_{J\xi}\xi,\xi) )$.
If $a$ is nonzero at some point $p \in M$, then, since $a$ is smooth, $a$ is nonzero in some open neighborhood $U$ of $p \in M$. The previous equation then yields 
$g(\bar{R}_{J\xi}\xi,Y) = \eta(Y)  g(\bar{R}_{J\xi}\xi,\xi)$
on $U$ and hence $\bar{R}_{J\xi}\xi \in \RR\xi$ on $U$. Proposition \ref{IRF4} then implies that $a$ is constant on $U$. So, whenever $a$ is nonzero at some point, it is constant in an open neighborhood of that point. Since $M$ is connected and $a$ is a smooth function, it follows that $a$ is constant on $M$. 
\end{proof}

From Proposition \ref{IRF5} and Corollary \ref{IRF3} we obtain more information on the principal curvatures.

\begin{cor}\label{IRF6} 
Let $M$ be a connected orientable real hypersurface with isometric Reeb flow in an irreducible Hermitian symmetric space $\bar{M}$. Let $X,Y \in {\mathcal C}$ with $SX = b X$ and $SY = c Y$. Then we have
\[
(a (b + c) - 2bc)g(JX,Y)  = 2g(\bar{R}_{J\xi}X,JY) 
\mbox{ and }
0 = (b-c) g(\bar{R}_{J\xi}X,Y)  .
\]
\end{cor}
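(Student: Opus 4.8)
The plan is to combine Corollary \ref{IRF3} with the constancy result established in Proposition \ref{IRF5}; no new computation is needed beyond a single substitution. First I would observe that the first asserted equation,
\[
(a(b+c) - 2bc)g(JX,Y) = 2g(\bar{R}_{J\xi}X,JY),
\]
is literally identical to the first equation of Corollary \ref{IRF3}. That equation already holds for any connected orientable real hypersurface with isometric Reeb flow in an Hermitian symmetric space, so it carries over verbatim to the present (more restrictive) irreducible setting without further argument.

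For the second equation I would invoke Proposition \ref{IRF5}, which guarantees that the principal curvature function $a$ is constant on $M$ precisely because $\bar{M}$ is assumed irreducible. Constancy of $a$ means $da \equiv 0$, and in particular $da(\xi) = 0$ at every point of $M$. The second equation of Corollary \ref{IRF3} reads
\[
(b+c)\,da(\xi)\,g(JX,Y) = 4(b-c)\,g(\bar{R}_{J\xi}X,Y).
\]
Substituting $da(\xi) = 0$ annihilates the entire left-hand side, leaving
\[
0 = 4(b-c)\,g(\bar{R}_{J\xi}X,Y),
\]
and dividing by $4$ yields the desired identity $0 = (b-c)\,g(\bar{R}_{J\xi}X,Y)$.

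There is no genuine obstacle in this deduction: the corollary is a direct specialization of Corollary \ref{IRF3} to the irreducible case, where the extra rigidity forces the directional derivative $da(\xi)$ to vanish. The only conceptual point worth emphasizing is that the entire improvement over Corollary \ref{IRF3} is inherited from Proposition \ref{IRF5}, whose proof in turn rests on the index theory of irreducible Hermitian symmetric spaces used in Proposition \ref{IRF4}. Thus all the real work has already been carried out upstream, and this statement merely records the clean form of the two eigenvalue equations that the preceding results make available for the subsequent analysis of the principal curvatures.
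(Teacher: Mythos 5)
Your proof is correct and is essentially identical to the paper's own derivation: the paper obtains Corollary \ref{IRF6} precisely by combining Corollary \ref{IRF3} with Proposition \ref{IRF5}, the constancy of $a$ forcing $da(\xi) = 0$ and thereby reducing the second equation of Corollary \ref{IRF3} to $0 = (b-c)\,g(\bar{R}_{J\xi}X,Y)$, while the first equation carries over verbatim. No gap; nothing further is needed.
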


From the second of the two equations in Corollary \ref{IRF6} we see that the normal Jacobi operator $\bar{R}_{J\xi}$ leaves the principal curvature spaces of $M$ invariant. This means that $S$ and $\bar{R}_{J\xi}$ can be simultaneously diagonalized, or equivalently, $\bar{R}_{J\xi} S = S\bar{R}_{J\xi}$. Submanifolds with such a property are called curvature-adapted (see \cite{BV92}) or compatible (see \cite{Gr04}). We thus have:

\begin{cor} \label{IRF7}
Let $M$ be a connected orientable real hypersurface with isometric Reeb flow in an irreducible Hermitian symmetric space $\bar{M}$. Then $M$ is curvature-adapted.
\end{cor}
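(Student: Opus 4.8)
The plan is to prove the equivalent statement $\bar{R}_{J\xi}S = S\bar{R}_{J\xi}$ by showing that $\bar{R}_{J\xi}$ leaves every eigenspace of $S$ invariant. Since $S$ is diagonalizable (being self-adjoint), once $\bar{R}_{J\xi}$ preserves each of its eigenspaces the two operators commute: on the $\lambda$-eigenspace $S$ acts as $\lambda I$, and $\bar{R}_{J\xi}$ maps this eigenspace to itself, so the operators commute there, and summing over the orthogonal decomposition into eigenspaces gives commutation on all of $TM$. First I would note that $\bar{R}_{J\xi}$ is a self-adjoint endomorphism of $TM$: it has no normal component since $g(\bar{R}(X,J\xi)J\xi,J\xi) = 0$, and self-adjointness is just the pair symmetry of $\bar{R}$. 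I then decompose $TM = \RR\xi \oplus {\mathcal C}$ and recall from Corollary \ref{isomgeod} and Proposition \ref{geodReeb} that $S\xi = a\xi$.

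The first substantive step concerns the Reeb direction. By Proposition \ref{IRF5} the function $a$ is constant, so Proposition \ref{IRF4} applies and tells us that $\xi$ is an eigenvector of $\bar{R}_{J\xi}$; that is, $\bar{R}_{J\xi}\xi \in \RR\xi$. Self-adjointness then forces $\bar{R}_{J\xi}$ to preserve ${\mathcal C}$ as well, since for $X \in {\mathcal C}$ one has $g(\bar{R}_{J\xi}X,\xi) = g(X,\bar{R}_{J\xi}\xi) = 0$.

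It remains to analyze $\bar{R}_{J\xi}$ on ${\mathcal C}$, and here the second equation of Corollary \ref{IRF6} is exactly what is needed: if $X,Y \in {\mathcal C}$ satisfy $SX = bX$ and $SY = cY$ with $b \neq c$, then $g(\bar{R}_{J\xi}X,Y) = 0$. Writing $V_b = \{X \in {\mathcal C} : SX = bX\}$, this shows that $\bar{R}_{J\xi}X$ is orthogonal to every $V_c$ with $c \neq b$ and, by the previous step, to $\xi$, so $\bar{R}_{J\xi}V_b \subseteq V_b$. Assembling this over all eigenvalues, the $S$-eigenspace for an eigenvalue $\lambda \neq a$ is $V_\lambda \subseteq {\mathcal C}$ and is preserved, while the $S$-eigenspace for the eigenvalue $a$ is $\RR\xi \oplus V_a$ and is preserved because $\bar{R}_{J\xi}\xi \in \RR\xi$ and $\bar{R}_{J\xi}V_a \subseteq V_a$. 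Hence $\bar{R}_{J\xi}$ leaves every eigenspace of $S$ invariant, and the two operators commute.

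I expect the main obstacle to be organizational rather than computational: once $a$ is known to be constant, Corollary \ref{IRF6} delivers the required orthogonality relations for free, and the only point demanding care is the eigenvalue $a$, which is shared by $\xi$ and possibly by a subspace $V_a$ of ${\mathcal C}$. The relation $\bar{R}_{J\xi}\xi \in \RR\xi$ is precisely what prevents $\bar{R}_{J\xi}$ from mixing the $\RR\xi$ and $V_a$ summands of that eigenspace, so the bookkeeping closes up and the commutation follows.
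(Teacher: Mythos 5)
Your proof is correct and takes essentially the same route as the paper: both arguments hinge on the second equation of Corollary \ref{IRF6}, which shows that $\bar{R}_{J\xi}$ preserves the principal curvature spaces inside $\mathcal{C}$, whence $S$ and $\bar{R}_{J\xi}$ commute. Your explicit handling of the Reeb direction (via Propositions \ref{IRF4} and \ref{IRF5} and self-adjointness of $\bar{R}_{J\xi}$) only spells out bookkeeping that the paper leaves implicit.
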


From the first of the two equations in Corollary \ref{IRF6} we get additional information by choosing $Y = \phi X = JX$ and $\|X\| = 1$, namely $b^2 -  a b - \kappa = 0$,
where $\kappa \in \RR$ with $\bar{R}_{J\xi}X = \kappa X$. It follows that the principal curvatures of $M$ are completely determined by $a$ and the eigenvalues of the normal Jacobi operator. 

Recall that a tangent vector $X \in T_p\bar{M}$ of a semisimple Riemannian symmetric space $\bar{M}$  is said to be regular if there exists a unique connected, complete, totally geodesic flat submanifold $F$ of $\bar{M}$ (a so-called maximal flat in $\bar{M}$) with $p \in F$, $X\in T_pF$ and $\dim(F) = \rk(\bar{M})$. Otherwise $X$ is said to be a singular tangent vector of $\bar{M}$. It is known from standard theory of symmetric spaces that every tangent vector is tangent to some maximal flat. The next result shows that, if the Reeb flow is isometric, then the normal vectors of $M$ are singular tangent vectors under some mild additional assumptions. 

\begin{prop} \label{normalsingular}
Let $M$ be a connected orientable real hypersurface with isometric Reeb flow in an irreducible Hermitian symmetric space $\bar{M}$ with $\rk(\bar{M}) \geq 2$. Then $J\xi$ is a singular tangent vector of $\bar{M}$ everywhere.
\end{prop}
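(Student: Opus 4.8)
The plan is to translate the statement into Lie-theoretic language and to play the eigenspace structure of the normal Jacobi operator against the complex structure. Since $\xi = -J\zeta$ we have $J\xi = \zeta$, so $\bar{R}_{J\xi} = \bar{R}_\zeta$ is genuinely the normal Jacobi operator. Identifying $T_p\bar{M}$ with $\mathfrak{p}$ in a Cartan decomposition $\mathfrak{g} = \mathfrak{k}\oplus\mathfrak{p}$ and using that $\bar{M}$ has compact type, I would write $\bar{R}_{J\xi} = -(\ad\zeta)^2|_{\mathfrak{p}}$. This operator is symmetric and negative semidefinite, so $\ker\bar{R}_{J\xi} = \{X\in\mathfrak{p} : [\zeta,X]=0\}$ is exactly the centralizer $\mathfrak{z}_{\mathfrak{p}}(\zeta)$. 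By definition $\zeta = J\xi$ is regular precisely when this centralizer is a maximal abelian subspace (the tangent space $\mathfrak{a}$ of a maximal flat) of dimension $r = \rk(\bar{M})$; otherwise it is singular. So I would argue by contradiction: assume $J\xi$ is regular at some point $p$ and extract a contradiction from $r\geq 2$.

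The structural inputs needed are already available. From Proposition \ref{IRF5} and Proposition \ref{IRF4} the vector $\xi$ is an eigenvector of $\bar{R}_{J\xi}$, so $\bar{R}_{J\xi}$ preserves each summand of the orthogonal decomposition $T_p\bar{M} = \RR\zeta\oplus\RR\xi\oplus{\mathcal C}$: it kills $\zeta$, scales $\xi$, and therefore maps ${\mathcal C} = (\RR\zeta\oplus\RR\xi)^\perp$ into itself. Moreover, rewriting Proposition \ref{IRF2} as $\bar{R}_{J\xi} = -J\,\bar{R}_{J\xi}\,J$ on ${\mathcal C}$ shows that $\bar{R}_{J\xi}$ commutes with $J$ on ${\mathcal C}$; hence every eigenspace of $\bar{R}_{J\xi}|_{\mathcal C}$, and in particular its kernel $E_0 = \ker\bar{R}_{J\xi}\cap{\mathcal C}$, is $J$-invariant. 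The final ingredient I would invoke is the positivity of the holomorphic sectional curvature of an irreducible Hermitian symmetric space of compact type, in the form $g(\bar{R}(w,Jw)Jw,w) = c\,\|[w,Jw]\|^2$ with $c>0$; equivalently, maximal flats are totally real, so $\mathfrak{a}$ contains no nonzero $w$ together with $Jw$.

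With these in hand the contradiction is short. Under the regularity assumption $\ker\bar{R}_{J\xi} = \mathfrak{a}$ is a maximal flat, hence totally real. First I would rule out $\xi\in\mathfrak{a}$: otherwise $\xi$ and $J\xi = \zeta$ both lie in the abelian subspace $\mathfrak{a}$, forcing $[\xi,J\xi]=0$ and $g(\bar{R}(\xi,J\xi)J\xi,\xi)=0$, which is impossible. Thus $\xi$ has nonzero eigenvalue, and the block decomposition gives $\mathfrak{a} = \ker\bar{R}_{J\xi} = \RR\zeta\oplus E_0$, so $E_0 = \mathfrak{a}\cap{\mathcal C}$ has dimension $r-1\geq 1$ and is nonzero. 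But $E_0$ is $J$-invariant and contained in $\mathfrak{a}$, so choosing $0\neq w\in E_0$ gives $w,Jw\in\mathfrak{a}$ and again $[w,Jw]=0$, contradicting total reality. Hence $J\xi$ is singular at $p$, and since $p$ was arbitrary, everywhere.

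The main obstacle is conceptual rather than computational: recognizing that the two facts already extracted about $\bar{R}_{J\xi}$, namely that $\xi$ is an eigenvector (Proposition \ref{IRF4} and Proposition \ref{IRF5}) and that $\bar{R}_{J\xi}$ commutes with $J$ on ${\mathcal C}$ (Proposition \ref{IRF2}), combine with the Lie-theoretic identification $\ker\bar{R}_{J\xi} = \mathfrak{z}_{\mathfrak{p}}(\zeta)$ to pin down the kernel, and then that positivity of the holomorphic sectional curvature (equivalently, total reality of maximal flats) is exactly the obstruction to a nonzero $J$-invariant subspace sitting inside a maximal flat. Stating carefully the identity $\ker\bar{R}_{J\xi} = \mathfrak{z}_{\mathfrak{p}}(\zeta)$ for regular vectors and the positivity $c>0$ are the background facts I would want to record.
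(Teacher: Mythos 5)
Your proof is correct, but it takes a genuinely different route from the paper's. The paper argues pointwise and geometrically: assuming $J\xi_p$ is regular, it picks a single unit vector $X$ in the unique maximal flat $F$ perpendicular to $J\xi_p$, notes $X \in {\mathcal C}_p$ by total reality, gets $\bar{R}_{J\xi_p}X = 0$ from the Gau{\ss} equation (since $F$ is totally geodesic and flat), promotes this to $\bar{R}_{J\xi_p}JX = 0$ via Proposition \ref{IRF2}, and then contradicts regularity using the standard fact that every flat $2$-plane through a regular vector lies inside the unique maximal flat, while $JX \notin T_pF$ because $F$ is totally real. You instead identify the \emph{whole} kernel: writing $\bar{R}_{J\xi} = -(\ad \zeta)^2|_{\mathfrak{p}}$ gives $\ker \bar{R}_{J\xi} = \mathfrak{z}_{\mathfrak{p}}(\zeta)$, which under the regularity assumption equals the maximal abelian subspace $\mathfrak{a}$; you then invoke Propositions \ref{IRF4} and \ref{IRF5} to split off the $\xi$-direction, use Proposition \ref{IRF2} to make $E_0 = \mathfrak{a} \cap {\mathcal C}$ a $J$-invariant subspace, and contradict total reality directly via $[w,Jw] \neq 0$. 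Both proofs rest on the same two pillars, Proposition \ref{IRF2} and total reality of maximal flats, but the executions differ: your version makes the Lie-theoretic picture explicit (kernel of the normal Jacobi operator equals the centralizer), which dovetails nicely with the algebraic machinery of Section \ref{StcHss} and the lemmas in the classification proof, at the cost of invoking the heavier results \ref{IRF4} and \ref{IRF5} (whose proofs rely on index theory and prior classifications); the paper's argument avoids these entirely, needs only one inclusion $T_pF \subset \ker \bar{R}_{J\xi_p}$ rather than equality, and is shorter and more elementary.

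Three minor caveats, none fatal. First, on a compact-type space $-(\ad\zeta)^2|_{\mathfrak{p}}$ is \emph{positive} semidefinite with respect to the Riemannian metric (sectional curvature is nonnegative), not negative semidefinite; this is harmless since you only use semidefiniteness to conclude $\ker(\ad\zeta)^2|_{\mathfrak{p}} = \ker(\ad\zeta)|_{\mathfrak{p}}$, which holds either way. Second, the proposition is stated for arbitrary irreducible Hermitian symmetric spaces, including noncompact type, and the paper's proof covers both uniformly; your argument restricts to compact type as written, though it adapts immediately ($\ad\zeta$ becomes symmetric rather than skew-symmetric, holomorphic sectional curvature becomes negative, but the kernel identification and $[w,Jw]\neq 0$ persist). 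Third, ``regular if and only if the centralizer $\mathfrak{z}_{\mathfrak{p}}(\zeta)$ is maximal abelian'' is not literally the paper's definition (which is uniqueness of the maximal flat through the vector); the equivalence is standard and short, but it deserves a line of justification rather than the label ``by definition.''
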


\begin{proof}
We prove this by contradiction. Assume that $J\xi_p$ is a regular tangent vector of $\bar{M}$ at some point $p \in M$. Then there exists a unique maximal flat $F$ of $\bar{M}$ with $p \in F$ and $J\xi_p \in T_pF$. It is known that any maximal flat of an irreducible Hermitian symmetric space is a totally real submanifold. Since $\dim(F) = \rk(\bar{M}) \geq 2$, there exists a unit vector $X \in T_pF$ perpendicular to $J\xi_p$. Note that $X \in {\mathcal C}_p$ since $F$ is totally real. Since $F$ is totally geodesic in $\bar{M}$, the Gau{\ss} equation for $F$ in $\bar{M}$ implies that $\bar{R}_{J\xi_p}X = 0$. As $M$ has isometric Reeb flow, we get $\bar{R}_{J\xi_p}JX = 0$ from Proposition \ref{IRF2}. In particular, the sectional curvature $\bar{K}(V)$ of $\bar{M}$ with respect to the $2$-plane $V = \RR J\xi_p \oplus \RR JX \subset T_p\bar{M}$ satisfies $\bar{K}(V) = 0$. This contradicts the assumption that $J\xi_p$ is a regular tangent vector of $\bar{M}$, because the only $2$-planes $V \subset T_p\bar{M}$ with $J\xi_p \in V$ and $\bar{K}(V) = 0$ are those contained in $T_pF$.
\end{proof}

The next result shows that the normal spaces of a real hypersurface with isometric Reeb flow generate spaces of constant curvature.

\begin{prop} \label{normalline}
Let $M$ be a connected orientable real hypersurface with isometric Reeb flow in an irreducible Hermitian symmetric space $\bar{M}$ of compact type (resp.\ of noncompact type). For each $p \in M$ there exists a totally geodesic complex projective (resp.\ hyperbolic) line $\Sigma = \CC P^1$ (resp.\ $\Sigma = \CC H^1$) in $\bar{M}$ with $p \in \Sigma$ such that $T_p\Sigma = \CC\xi_p = \RR \xi_p \oplus \RR J\xi_p$. \end{prop}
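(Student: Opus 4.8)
The plan is to show that, for each $p\in M$, the real $2$-plane $V=\CC\xi_p=\RR\xi_p\oplus\RR J\xi_p\subset T_p\bar{M}$ is curvature-invariant, i.e.\ $\bar{R}(V,V)V\subset V$, and then to invoke the standard existence theorem for totally geodesic submanifolds (Theorem 10.3.3 in \cite{BCO16}) to produce a complete totally geodesic submanifold $\Sigma$ with $p\in\Sigma$ and $T_p\Sigma=V$. Since $J$ is parallel and $\Sigma$ is totally geodesic, the $J$-invariance of $V$ propagates along $\Sigma$, so $\Sigma$ is a complex curve; being totally geodesic it is itself a Hermitian symmetric space, necessarily of complex dimension one. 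It then remains only to identify which one. Throughout, recall that $J\xi=\zeta$ is the unit normal, so $V$ is the holomorphic $2$-plane through $p$ spanned by the tangent direction $\xi$ and the normal direction $J\xi$.

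To establish curvature-invariance, first recall that $a$ is constant by Proposition \ref{IRF5}, so $da\equiv 0$. The identity $da(X)=da(\xi)\eta(X)-g(\bar{R}_{J\xi}\xi,JX)$ from the proof of Proposition \ref{GRF} then gives $g(\bar{R}_{J\xi}\xi,JX)=0$ for all $X\in TM$. Because ${\mathcal C}$ is $J$-invariant and $J\xi=\zeta$, the image $J(TM)$ equals $\RR J\xi\oplus{\mathcal C}$, so $\bar{R}_{J\xi}\xi$ is orthogonal to $\RR J\xi\oplus{\mathcal C}$; as $T_p\bar{M}=\RR\xi\oplus\RR J\xi\oplus{\mathcal C}$, this forces $\bar{R}_{J\xi}\xi\in\RR\xi$, that is $\bar{R}(\xi,J\xi)J\xi\in V$. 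Applying the K\"{a}hler identity $\bar{R}(\cdot,\cdot)J=J\bar{R}(\cdot,\cdot)$ together with $\xi=-J(J\xi)$ yields $\bar{R}(\xi,J\xi)\xi=-J\bar{R}(\xi,J\xi)J\xi\in\RR J\xi\subset V$. Since on the $2$-plane $V$ the only curvature endomorphism to test is $\bar{R}(\xi,J\xi)$, these two computations show $\bar{R}(V,V)V\subset V$.

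Finally, a complete Hermitian symmetric space of complex dimension one is $\CC P^1$, $\CC$, or $\CC H^1$, distinguished by the sign of its (constant) curvature, which equals the holomorphic sectional curvature $\bar{K}(V)=g(\bar{R}_{J\xi}\xi,\xi)$ of the ambient space along $V$. For an irreducible Hermitian symmetric space of compact (resp.\ noncompact) type the holomorphic sectional curvature is everywhere positive (resp.\ negative); equivalently, the holomorphic plane $V$ cannot be contained in a totally real maximal flat, so $\bar{K}(V)\neq 0$, while the ambient sectional curvature is nonnegative (resp.\ nonpositive). Hence $\bar{K}(V)>0$ (resp.\ $\bar{K}(V)<0$), and since a complex curve is orientable this gives $\Sigma\cong\CC P^1$ (resp.\ $\Sigma\cong\CC H^1$).

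I expect the crux of the argument to be the curvature-invariance of $V$, and within it the step pinning $\bar{R}_{J\xi}\xi$ to the line $\RR\xi$: this is where the constancy of $a$ (Proposition \ref{IRF5}) is used decisively, together with the K\"{a}hler identity to transport the conclusion from $\bar{R}(\xi,J\xi)J\xi$ to $\bar{R}(\xi,J\xi)\xi$. The only other delicate point is excluding the flat case $\Sigma=\CC$, for which one relies on the strict positivity (resp.\ negativity) of the holomorphic sectional curvature of an irreducible Hermitian symmetric space of compact (resp.\ noncompact) type.
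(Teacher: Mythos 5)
Your proposal is correct and follows essentially the same route as the paper: you establish $\bar{R}_{J\xi}\xi\in\RR\xi$ from the constancy of $a$ (the content of Propositions \ref{IRF4} and \ref{IRF5}), transfer this to $\bar{R}_\xi J\xi\in\RR J\xi$ via the K\"{a}hler identity to get curvature-invariance of $\CC\xi_p$, and exclude the flat case by the fact that maximal flats in irreducible Hermitian symmetric spaces are totally real. The paper's proof is the same argument, differing only in presentation (it cites Proposition \ref{IRF4} directly rather than re-deriving the eigenvector property from $da\equiv 0$).
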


\begin{proof}
Let $p \in M$ and $V = \CC \xi_p$. From Propositions  \ref{IRF4} and \ref{IRF5} we know that $\bar{R}_{J\xi} \xi = \kappa \xi$. Using curvature identities this implies
$\kappa g(\xi,X) = g(\bar{R}(\xi,J\xi)J\xi,X) = g(\bar{R}(J\xi,\xi)\xi,JX) = -g(J\bar{R}(J\xi,\xi)\xi,X)$.
Thus $- J\bar{R}(J\xi,\xi)\xi = \kappa\xi$ and hence $\bar{R}_\xi J\xi = \kappa J \xi$. It follows that
$V$ is a curvature-invariant subspace of $T_p\bar{M}$. Thus there exists a connected, complete, totally geodesic submanifold $\Sigma$ of $\bar{M}$ with $T_p\Sigma = V$. Since $V$ is $J$-invariant, the submanifold $\Sigma$ is a complex submanifold of $\bar{M}$. Moreover, from $\bar{R}_{J\xi_p} \xi_p = \kappa \xi_p$ we see that $\Sigma$ has constant sectional curvature $\kappa$. 

First assume that $\kappa = 0$. Then $\Sigma$ is a flat totally geodesic submanifold of $\bar{M}$ and must be contained in a maximal flat of $\bar{M}$. Maximal flats in irreducible Hermitian symmetric spaces are totally real submanifolds. Hence $\Sigma$ is totally real, which is a contradiction. It follows that $\kappa \neq 0$. 

If $\bar{M}$ is of compact type, then $\Sigma$ is a complex projective line $\CC P^1$ with constant sectional curvature $\kappa > 0$. If $\bar{M}$ is of noncompact type, then $\Sigma$ is a complex hyperbolic line $\CC H^1$ with constant sectional curvature $\kappa < 0$.  
\end{proof}

Since every geodesic in complex projective line is closed, we get the following consequence from Corollary \ref{isomgeod} and Proposition \ref{normalline}.

\begin{cor}
Let $M$ be a connected orientable real hypersurface with isometric Reeb flow in an irreducible Hermitian symmetric space $\bar{M}$ of compact type. Then the Reeb flow lines are closed geodesics in $M$ and the geodesics in $\bar{M}$ that are perpendicular to $M$ are closed.
\end{cor}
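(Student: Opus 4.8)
The plan is to prove the two statements simultaneously by localizing everything inside the totally geodesic complex projective line provided by Proposition~\ref{normalline}. Fix $p \in M$ and let $\Sigma = \CC P^1$ be the totally geodesic submanifold with $T_p\Sigma = \RR\xi_p \oplus \RR J\xi_p$; note that $\Sigma$ carries a round metric of positive constant curvature and is $J$-invariant, and recall that $\zeta = J\xi$ since $\xi = -J\zeta$. I would show that both the Reeb flow line through $p$ and any normal geodesic through $p$ are curves lying entirely in this single $\Sigma$, and then read off closedness from the geometry of the round two-sphere.

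The normal geodesics are the easy case. A geodesic $\sigma$ of $\bar{M}$ that meets $M$ perpendicularly at $p$ has $\sigma'(0) = \pm\zeta_p = \pm J\xi_p \in T_p\Sigma$. Since $\Sigma$ is totally geodesic in $\bar{M}$, the whole geodesic $\sigma$ stays in $\Sigma$; as every geodesic of the round $\CC P^1$ is a closed great circle, $\sigma$ is closed.

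For the Reeb flow lines I would first invoke Corollary~\ref{isomgeod}: the isometric flow is geodesic, so each integral curve $\gamma$ of $\xi$ is a geodesic in $M$ with $\gamma' = \xi$ along $\gamma$. Combining the Gau{\ss} formula with $S\xi = a\xi$, the constancy of $a$ from Proposition~\ref{IRF5}, and $\zeta = J\xi$, the curve $\gamma$, viewed in $\bar{M}$, satisfies the second-order ODE
\[
\bar\nabla_{\gamma'}\gamma' = a\,g(\gamma',\gamma')\,\zeta = a\zeta = aJ\gamma' .
\]
The key step is to identify $\gamma$ with a curve inside $\Sigma$. Because $\Sigma$ is totally geodesic and $J$-invariant, the curve $\tilde\gamma$ solving $\bar\nabla_{\tilde\gamma'}\tilde\gamma' = aJ\tilde\gamma'$ within $\Sigma$, with initial data $(p,\xi_p)$, solves exactly the same equation in $\bar{M}$; by uniqueness of solutions of this ODE, $\gamma = \tilde\gamma$, so $\gamma$ lies in $\Sigma$.

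It then remains to observe that, inside the round two-sphere $\Sigma = \CC P^1$, a unit-speed curve with $\bar\nabla_{\gamma'}\gamma' = aJ\gamma'$ and $a$ constant has constant geodesic curvature $|a|$, hence is a circle and in particular closed. Thus each Reeb flow line is a closed curve which, by Corollary~\ref{isomgeod}, is a geodesic in $M$. I expect the only genuinely delicate point to be the uniqueness argument placing the geodesic-in-$M$ curve $\gamma$ (which is not a geodesic of $\bar{M}$ when $a \neq 0$) inside the totally geodesic $\CC P^1$; once this is secured, both conclusions follow from elementary properties of the round metric.
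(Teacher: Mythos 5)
Your proposal is correct and takes essentially the same route as the paper: the paper also derives $\bar\nabla_\xi\xi = aJ\xi$ (differentiating once more to get the circle equation $\bar\nabla_\xi\bar\nabla_\xi\xi = -a^2\xi$) and then concludes from Proposition~\ref{normalline} that each Reeb flow line is a small circle in the totally geodesic $\CC P^1$, while the perpendicular geodesics are handled exactly as you do. Your explicit ODE-uniqueness argument simply spells out the localization step that the paper leaves implicit, so this is a faithful (and slightly more detailed) version of the same proof.
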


\begin{proof}
The second statement follows immediately from Corollary \ref{isomgeod} and Proposition \ref{normalline}. For the first statement, we use the Weingarten formula and Corollary \ref{isomgeod} to obtain $\bar\nabla_\xi\xi = \nabla_\xi\xi + a J\xi = a J\xi$.
Differentiating again and using Proposition \ref{IRF5} we get $\bar\nabla_\xi\bar\nabla_\xi\xi = a \bar\nabla_\xi J\xi = a J \bar\nabla_\xi \xi = -a^2 \xi$. Using Proposition \ref{normalline}, we conclude that each integral curve of $\xi$ is a ``small'' circle in a complex projective line, which is a sphere of positive constant sectional curvature, and hence a closed curve. 
\end{proof}

This finishes the general structure theory for real hypersurfaces with isometric Reeb flow in K\"{a}hler manifolds. In the next sections we will apply this structure theory to investigate real hypersurfaces with isometric Reeb flow in some Hermitian symmetric spaces, with the aim to obtain a classification of such hypersurfaces.

\section{Structure theory of compact Hermitian symmetric spaces} \label{StcHss}

In this section we present an algebraic model of Hermitian symmetric spaces of compact type based on structure theory of semisimple real and complex Lie algebras. Some algebraic details can be found for example in the book \cite{Sa90} by Samelson.

Let $\bar{M}$ be an irreducible Hermitian symmetric space of compact type and $G = I^o(\bar{M})$ be the identity component of the isometry group of $\bar{M}$. We choose and fix a point $o \in \bar{M}$ and denote by $K$ the isotropy group of $G$ at $o$. Then $\bar{M}$ can be realized as the homogeneous space $\bar{M} = G/K$ in the usual way. We denote by $\cg$ and $\ck$ the Lie algebras of $G$ and $K$, respectively. Then $\cg$ is a simple real Lie algebra.

The isotropy group $K$ has a $1$-dimensional center $Z$ and coincides with the centralizer of $Z$ in $G$, which implies that $K$ has maximal rank in $G$. There exists a unique element $z_o$ in the Lie algebra $\cz$ of $Z$ such that the complex structure $J$ on $T_o\bar{M}$ is given by $J = \ad(z_o)$. We will give a more explicit description of $z_o$ further below.

Let $\ch$ be a Cartan subalgebra of $\ck$. Since $K$ has maximal rank in $G$, $\ch$ is a Cartan subalgebra of $\cg$. Thus the complexification $\ch^\CC$ of $\ch$ is a Cartan subalgebra of the complexification $\cg^\CC$ of $\cg$. Let
\[
\cg^\CC = \ch^\CC \oplus \left( \bigoplus_{\alpha\in\Delta} \cg_\alpha \right)
\]
be the root decomposition of $\cg^\CC$ with respect to $\ch^\CC$. We fix a set $\{\alpha_1,\ldots,\alpha_r\}$ of simple roots of $\Delta$ and denote by $\Delta^+$ the resulting subset of $\Delta$ consisting of all positive roots.

Let $\cg = \ck \oplus \cp$ be the Cartan decomposition of $\cg$ with respect to $\ck$. Since $[\ch^\CC,\ck^\CC] \subset \ck^\CC$ and $[\ch^\CC,\cp^\CC] \subset \cp^\CC$, we either have $\cg_\alpha \subset \ck^\CC$ or $\cg_\alpha \subset \cp^\CC$ for each $\alpha \in \Delta$. If $\cg_\alpha \subset \ck^\CC$, then the root $\alpha$ is compact, and if $\cg_\alpha \subset \cp^\CC$, then the root $\alpha$ is noncompact. A root $\alpha \in \Delta$ is compact if and only if $\alpha (\cz^\CC) = \{0\}$. Denote by $\Delta_K$ the set of compact roots and by $\Delta_M$ the set of noncompact roots. The decomposition $\cg^\CC = \ck^\CC \oplus \cp^\CC$ is given by
\[
\ck^\CC = \ch^\CC \oplus \left( \bigoplus_{\alpha \in \Delta_K} \cg_\alpha \right)\ ,\ 
\cp^\CC =   \bigoplus_{\alpha \in \Delta_M} \cg_\alpha \ .
\]

We can be more explicit about this. Let $H^1,\ldots,H^r \in \ch^\CC$ be the dual basis of $\alpha_1,\ldots,\alpha_r$ defined by $\alpha_i(H^j) = \delta_{ij}$. There exists an integer $k \in \{1,\ldots,r\}$ such that $\Delta_K$ is generated by the simple roots $\alpha_1,\ldots,\alpha_{k-1},\alpha_{k+1},\ldots,\alpha_r$ and the multiplicity of $\alpha_k$ in the highest root $\delta \in \Delta^+$ is equal to one. The complex structure $J = \ad(z_o)$ on $\cp \cong T_o\bar{M}$ is given by $z_o = iH^k$ and we have
\[
\Delta_K = \{\alpha \in \Delta : \alpha(H^k) = 0\}\ ,\ 
\Delta_M = \{\alpha \in \Delta : \alpha(H^k) = \pm 1\}\ ,
\]
and thus
\[
\ck^\CC = \ch^\CC \oplus \left( \bigoplus_{\substack{\alpha \in \Delta \\ \alpha(H^k) = 0}} \cg_\alpha \right)\ ,\ 
\cp^\CC = \bigoplus_{\substack{\alpha \in \Delta \\ \alpha(H^k) = \pm 1}} \cg_\alpha\ .
\]
Conversely, if there is a simple root whose coefficient in the highest root is equal to one, we can construct a Hermitian symmetric space of compact type from it by applying the Borel-de Siebenthal construction method (\cite{BS49}). 

For each $\alpha \in \Delta$ there exists a unique vector $h_\alpha \in [\cg_\alpha,\cg_{-\alpha}] \subset \ch^\CC$, the so-called coroot corresponding to $\alpha$, such that $\alpha(h_\alpha) = 2$. Then we have
\[
i\ch = \mbox{span\ of}\ \{h_\alpha : \alpha \in \Delta\}
\]
and $\alpha(i\ch) = \RR$ for all $\alpha \in \Delta$. The Killing form $B$ of $\cg^\CC$ is nondegenerate on $\ch^\CC$ and positive definite on $i\ch$. Define $H_\alpha \in \ch^\CC$, the so-called root vector of $\alpha$, by $\alpha(H) = B(H_\alpha,H)$ for all $H \in \ch^\CC$ and a positive definite inner product $(\cdot,\cdot)$ on the dual space $(i\ch)^*$ by linear extension of $(\alpha_\nu,\alpha_\mu) = B(H_{\alpha_\nu},H_{\alpha_\mu})$. Then we have $h_\alpha = \frac{2}{(\alpha,\alpha)}H_\alpha$ for all $\alpha \in \Delta$.

For $\alpha,\beta \in \Delta$ with $\beta \neq \pm \alpha$ the $\alpha$-string containing $\beta$ is the set of roots
\[
\beta -p_{\alpha,\beta}\alpha,\ldots,\beta-\alpha,\beta,\beta+\alpha,\ldots,\beta+q_{\alpha,\beta}\alpha \in \Delta
\]
with $p_{\alpha,\beta},q_{\alpha,\beta} \in \ZZ$ and $p_{\alpha,\beta},q_{\alpha,\beta} \geq 0$ so that $\beta-(p_{\alpha,\beta}+1)\alpha \notin \Delta$ and $\beta + (q_{\alpha,\beta}+1)\alpha \notin \Delta$. The $\alpha$-string containing $\beta$ contains at most four roots. The Cartan integer $c_{\beta,\alpha}$ of $\alpha,\beta \in \Delta$ is defined by 
\[
c_{\beta,\alpha} = \beta(h_\alpha) = B(h_\alpha,H_\beta) = \frac{2}{(\alpha,\alpha)}B(H_\beta,H_\alpha) = 2\frac{(\beta,\alpha)}{(\alpha,\alpha)} \in \{0,\pm 1,\pm 2,\pm 3\}.
\]
The Cartan integer $c_{\beta,\alpha}$ is related to the $\alpha$-string containing $\beta$ by
\[
c_{\beta,\alpha} = p_{\alpha,\beta}-q_{\alpha,\beta}.
\]

For each nonzero $e_\alpha \in \cg_\alpha$ there exists $e_{-\alpha} \in \cg_{-\alpha}$ such that $B(e_\alpha,e_{-\alpha}) = \frac{2}{(\alpha,\alpha)}$. For such vectors we have
\[
[e_\alpha,e_{-\alpha}] = h_\alpha\ ,\ [h_\alpha,e_\alpha] = 2e_\alpha\ ,\ [h_\alpha,e_{-\alpha}] = -2e_{-\alpha}.
\]
Since all root spaces are one-dimensional, there exists for all $\alpha,\beta \in \Delta$ with $\alpha+\beta \in \Delta$ numbers $N_{\alpha,\beta} \in \CC$ such that 
\[
[e_\alpha,e_\beta] = N_{\alpha,\beta}e_{\alpha+\beta}.
\]
We put $N_{\alpha,\beta}=0$ if $\alpha + \beta \notin \Delta$.
It is possible to choose the vectors $e_\alpha$ in such a way so that $N_{\alpha,\beta} = -N_{-\alpha,-\beta}$ holds. We put $h_\nu = h_{\alpha_\nu}$. Then the vectors $h_\nu,e_\alpha$ ($\nu \in \{1,\ldots,r\}$, $\alpha \in \Delta$) form a basis of $\cg^\CC$, a so-called Chevalley basis, with the properties
\begin{itemize}
\item[(1)] $[h_\nu,h_\mu] = 0$ for all $\nu,\mu \in \{1,\ldots,r\}$;
\item[(2)] $[h_\nu,e_\alpha] = \alpha(h_\nu)e_\alpha = c_{\alpha,\alpha_\nu}e_\alpha$ for all $\nu \in \{1,\ldots,r\}$ and $\alpha \in \Delta$;
\item[(3)] For all $\alpha \in \Delta$ there exists $c_1,\ldots,c_r \in \ZZ$ such that $[e_\alpha,e_{-\alpha}] = c_1h_1 + \ldots + c_rh_r$;
\item[(4)] For all $\alpha,\beta \in \Delta$ with $\alpha + \beta \neq 0$ we have 
$[e_\alpha,e_\beta] = N_{\alpha,\beta}e_{\alpha+\beta}$, where
\begin{itemize}
\item[(i)] $N_{\alpha,\beta} = 0$ if $\alpha + \beta \notin \Delta$;
\item[(ii)] $N_{\alpha,\beta} = \pm(p_{\alpha,\beta}+1)$ if $\alpha + \beta \in \Delta$ (see \cite{Sa90} about the sign ambiguity).
\end{itemize}
\end{itemize}
Some of the useful properties of the integers $N_{\alpha,\beta}$ are:
\begin{itemize}
\item[(1)] $N_{\alpha,\beta} = -N_{\beta,\alpha} = -N_{-\alpha,-\beta} = N_{\beta,-\alpha-\beta} = N_{-\alpha-\beta,\alpha}$ for all $\alpha,\beta \in \Delta$;
\item[(2)] $\frac{N_{\alpha,\beta}}{(\gamma,\gamma)} =  \frac{N_{\beta,\gamma}}{(\alpha,\alpha)} = \frac{N_{\gamma,\alpha}}{(\beta,\beta)}$ for all pairwise independent $\alpha,\beta,\gamma \in \Delta$ with $\alpha + \beta + \gamma = 0$;
\item[(3)] $N^2_{\alpha,\beta} = \frac{1}{2}(p_{\alpha,\beta}+1)q_{\alpha,\beta}(\alpha,\alpha)$;
\item[(4)] $N_{\delta,-\epsilon}N_{\gamma,\zeta-\gamma} + N_{-\epsilon,\gamma}N_{\delta,\zeta-\delta} = N_{\gamma,\delta}N_{-\epsilon,-\zeta}\frac{(\zeta,\zeta)}{(\eta,\eta)}$ for all $\gamma,\delta,\epsilon,\zeta \in \Delta^+$ with $\gamma + \delta = \epsilon + \zeta$ and $\delta \leq \zeta \leq \epsilon \leq \gamma$.
\end{itemize}
From (3) we deduce $p_{\alpha,\beta}+1 = \frac{1}{2}q_{\alpha,\beta}(\alpha,\alpha)$. In particular, $q_{\alpha,\beta} = \frac{2}{(\alpha,\alpha)} = B(e_\alpha,e_{-\alpha})$ if $p_{\alpha,\beta} = 0$.

For each $\alpha \in \Delta$ we now define
\[
u_\alpha = e_\alpha - e_{-\alpha}\ ,\ v_\alpha = i(e_\alpha + e_{-\alpha}).
\]
Then the compact real form $\cg$ of $\cg^\CC$ is given by
\[
\cg  = \ch \oplus \left( \bigoplus_{\alpha \in \Delta^+} (\RR u_\alpha \oplus \RR v_\alpha) \right).
\]
The Cartan decomposition $\cg = \ck \oplus \cp$ then obviously is given by
\[
\ck  = \ch \oplus \left( \bigoplus_{\alpha \in \Delta_K^+} (\RR u_\alpha \oplus \RR v_\alpha) \right)\  ,  \ 
\cp  = \bigoplus_{\alpha \in \Delta_M^+} (\RR u_\alpha \oplus \RR v_\alpha) ,
\]
where $\Delta_K^+ = \Delta_K \cap \Delta^+$ and $\Delta_M^+ = \Delta_M \cap \Delta^+$. The complex structure $J = \ad(iH^k)$ acts on $\cp \cong T_o\bar{M}$ by
\[ 
Ju_\alpha = v_\alpha\ ,\ Jv_\alpha = -u_\alpha\ \  (\alpha \in \Delta_M^+). 
\]
By defining 
\[
\CC u_\alpha = \RR u_\alpha \oplus \RR Ju_\alpha =  \RR u_\alpha \oplus \RR v_\alpha
\]
for $\alpha \in \Delta_M^+$, we can write
\[
\cp  = \bigoplus_{\alpha \in \Delta_M^+} \CC u_\alpha .
\]

The following equations will be used later without referring to them explicitly:
\begin{itemize}
\item[(1)] $[h,u_\alpha] = -i\alpha(h)v_\alpha$ for all $h \in \ch$ and $\alpha \in \Delta$;
\item[(2)] $[h,v_\alpha] = i\alpha(h)u_\alpha$ for all $h \in \ch$ and $\alpha \in \Delta$;
\item[(3)] $[u_\alpha,v_\alpha] = 2ih_\alpha$ for all $\alpha \in \Delta$;
\item[(4)] $[u_\alpha,u_\beta] = N_{\alpha,\beta}u_{\beta + \alpha} - N_{-\alpha,\beta}u_{\beta-\alpha}$ for all $\alpha,\beta \in \Delta$ with $\beta \neq \pm \alpha$;
\item[(5)] $[v_\alpha,v_\beta] = -N_{\alpha,\beta}u_{\beta + \alpha} - N_{-\alpha,\beta}u_{\beta-\alpha}$ for all $\alpha,\beta \in \Delta$ with $\beta \neq \pm \alpha$;
\item[(6)] $[u_\alpha,v_\beta] = N_{\alpha,\beta}v_{\beta + \alpha} - N_{-\alpha,\beta}v_{\beta-\alpha}$ for all $\alpha,\beta \in \Delta$ with $\beta \neq \pm \alpha$;
\item[(7)] $B(u_\alpha,u_\beta) = B(u_\alpha,v_\beta) = B(v_\alpha,v_\beta) = 0$ for all $\alpha,\beta \in \Delta$ with $\beta \neq \pm \alpha$; 
\item[(8)] $B(u_\alpha,u_\alpha) = B(v_\alpha,v_\alpha) = -2$ for all $\alpha \in \Delta$.
\end{itemize}
From (7) and (8) we see that the vectors $\frac{1}{\sqrt{2}}u_\alpha$, $\alpha \in \Delta_M^+$, provide a complex orthonormal basis of the tangent space $T_o\bar{M} \cong \cp$. 
For all $\alpha,\beta \in \Delta_M^+$, $\alpha + \beta \notin \Delta$ since $(\alpha + \beta)(H^k) = 2$ and the coefficient of $\alpha_k$ in the highest root $\delta$ is $1$. This implies 
\begin{itemize}
\item[(4')] $[u_\alpha,u_\beta] = - N_{-\alpha,\beta}u_{\beta-\alpha}$ for all $\alpha,\beta \in \Delta_M^+$;
\item[(5')] $[v_\alpha,v_\beta] = - N_{-\alpha,\beta}u_{\beta-\alpha}$ for all $\alpha,\beta \in \Delta_M^+$;
\item[(6')] $[u_\alpha,v_\beta] = - N_{-\alpha,\beta}v_{\beta-\alpha}$ for all $\alpha,\beta \in \Delta_M^+$.
\end{itemize}
Note that, since $\alpha + \beta \notin \Delta$, the $(-\alpha)$-string containing $\beta$ starts with $\beta$ and therefore 
\[
N_{-\alpha,\beta} = \pm 1.
\]

We finally list the root systems and corresponding Hermitian symmetric spaces. We also include the extended Dynkin diagrams for $\{\alpha_1,\ldots,\alpha_r,-\delta\}$.

\begin{itemize}
\item[($A_r$)] $V = \{v \in \RR^{r+1} : \langle v,e_1 + \ldots + e_{r+1} \rangle = 0\}$, $r \geq 1$;\\
$\Delta= \{e_\nu - e_\mu : \nu \neq \mu\}$; $\Delta^+ = \{e_\nu - e_\mu : \nu < \mu\}$;\\
$\alpha_1 = e_1 - e_2 , \ldots,\alpha_r = e_r - e_{r+1}$;\\
$\delta = \alpha_1 + \ldots + \alpha_r = e_1 - e_{r+1}$;\\
\[
\xy
\POS (0,0) *\cir<2pt>{} ="a",
(10,0) *\cir<2pt>{}="b",
(30,0) *\cir<2pt>{}="c",
(40,0) *\cir<2pt>{}="d",
(20,10) *{\times}="e",
(0,-5) *{\alpha_1},
(10,-5) *{\alpha_2},
(30,-5) *{\alpha_{r-1}},
(40,-5) *{\alpha_r},
\ar @{-} "a";"b",
\ar @{.} "b";"c",
\ar @{-} "c";"d",
\ar @{-} "a";"e",
\ar @{-} "d";"e",
\endxy
\]

\smallskip
\noindent For each $k \in \{1,\ldots,r\}$, the coefficient of $\alpha_k$ in $\delta$ is equal to one and the corresponding Hermitian symmetric space is the complex Grassmann manifold $G_k(\CC^{r+1}) = SU_{r+1}/S(U_kU_{r+1-k})$. Since $\alpha_k$ and $\alpha_{r+1-k}$ lead to isometric Grassmann manifolds $G_k(\CC^{r+1}) \cong G_{r+1-k}(\CC^{r+1})$, we will always assume $2k \leq r+1$. Then
\begin{align*}
\Delta_M^+  & = \{\alpha_\nu + \ldots + \alpha_\mu  : 1 \leq \nu \leq k \leq \mu \leq r\}\\
& = \{ e_\nu - e_{\mu+1} : 1 \leq \nu \leq k \leq \mu \leq r\}.
\end{align*}

\medskip
\item[($B_r$)] $V = \RR^r$, $r \geq 2$;\\
$\Delta = \{\pm e_\nu \pm e_\mu : \nu < \mu\} \cup \{\pm e_\nu\}$; $\Delta^+ = \{e_\nu \pm e_\mu : \nu < \mu\} \cup \{e_\nu\}$;\\
$\alpha_1 = e_1 - e_2 , \ldots,\alpha_{r-1} = e_{r-1} - e_r,\alpha_r=e_r$;\\
$\delta = \alpha_1 + 2\alpha_2 + \ldots + 2\alpha_r = e_1 + e_2$;\\
\[
\xy
\POS (0,0) *\cir<2pt>{} ="a",
(10,0) *\cir<2pt>{}="b",
(30,0) *\cir<2pt>{}="c",
(40,0) *\cir<2pt>{}="d",
(50,0) *\cir<2pt>{}="e",
(10,10) *{\times}="f",
(0,-5) *{\alpha_1},
(10,-5) *{\alpha_2},
(30,-5) *{\alpha_{r-2}},
(40,-5) *{\alpha_{r-1}},
(50,-5) *{\alpha_r},
\ar @{-} "a";"b",
\ar @{.} "b";"c",
\ar @{-} "c";"d",
\ar @2{->} "d";"e",
\ar @{-} "b";"f"
\endxy
\]

\medskip\noindent
The coefficient of $\alpha_1$ in $\delta$ is equal to one and the corresponding Hermitian symmetric space is the real Grassmann manifold $G_2^+(\RR^{2r+1}) = SO_{2r+1}/SO_{2r-1}SO_2$. Then
\begin{align*}
\Delta_M^+  & = \{\alpha_1 + \ldots + \alpha_\mu  : 1 \leq \mu \leq r\} \\
& \qquad  \cup \{\alpha_1 + \ldots + \alpha_\mu + 2\alpha_{\mu+1} + \ldots + 2\alpha_r : 1 \leq \mu < r\} \\
& = \{ e_1 \pm e_{\mu+1} : 1 \leq \mu < r\} \cup \{ e_1\} .
\end{align*}

\medskip
\item[($C_r$)] $V = \RR^r$, $r \geq 3$;\\
$\Delta = \{\pm e_\nu \pm e_\mu : \nu < \mu\} \cup \{\pm 2e_\nu\}$; $\Delta^+ = \{e_\nu \pm e_\mu : \nu < \mu\} \cup \{2e_\nu\}$;\\
$\alpha_1 = e_1 - e_2 , \ldots,\alpha_{r-1} = e_{r-1} - e_r,\alpha_r=2e_r$;\\
$\delta = 2\alpha_1 + \ldots + 2\alpha_{r-1} + \alpha_r = 2e_1$;\\
\[
\xy
\POS (0,0) *\cir<2pt>{} ="a",
(10,0) *\cir<2pt>{}="b",
(30,0) *\cir<2pt>{}="c",
(40,0) *\cir<2pt>{}="d",
(50,0) *\cir<2pt>{}="e",
(-10,0) *{\times}="f",
(0,-5) *{\alpha_1},
(10,-5) *{\alpha_2},
(30,-5) *{\alpha_{r-2}},
(40,-5) *{\alpha_{r-1}},
(50,-5) *{\alpha_r},
\ar @{-} "a";"b",
\ar @{.} "b";"c",
\ar @{-} "c";"d",
\ar @2{<-} "d";"e",
\ar @2{->} "f";"a"
\endxy
\]

\medskip\noindent
The coefficient of $\alpha_r$ in $\delta$ is equal to one and the corresponding Hermitian symmetric space is $Sp_r/U_r$. Then
\begin{align*}
\Delta_M^+  & =\{ \alpha_\nu + \ldots + \alpha_{\mu-1} + 2\alpha_\mu + \ldots + 2\alpha_{r-1} + \alpha_r  : 1 \leq \nu < \mu \leq r\} 
\\ & \qquad \cup \{2\alpha_\nu + \ldots + 2\alpha_{r-1} + \alpha_r : 1 \leq \nu \leq r\}\\
& =\{ e_\nu + e_\mu : 1 \leq \nu < \mu \leq r\}  \cup \{ 2e_\nu: 1 \leq \nu \leq r\}.
\end{align*}

\medskip
\item[($D_r$)] $V = \RR^r$, $r \geq 4$;\\
$\Delta = \{\pm e_\nu \pm e_\mu : \nu < \mu\}$; $\Delta^+ = \{e_\nu \pm e_\mu : \nu < \mu\}$;\\
$\alpha_1 = e_1 - e_2 , \ldots,\alpha_{r-1} = e_{r-1} - e_r,\alpha_r=e_{r-1}+e_r$;\\
$\delta = \alpha_1 + 2\alpha_2 + \ldots + 2\alpha_{r-2} + \alpha_{r-1} + \alpha_r = e_1 + e_2$;\\
\[
\xy
\POS (0,0) *\cir<2pt>{} ="a",
(10,0) *\cir<2pt>{}="b",
(30,0) *\cir<2pt>{}="c",
(40,0) *\cir<2pt>{}="d",
(50,5) *\cir<2pt>{}="e",
(50,-5) *\cir<2pt>{}="f",
(10,10) *{\times}="g",
(0,-5) *{\alpha_1},
(10,-5) *{\alpha_2},
(30,-5) *{\alpha_{r-3}},
(40,-5) *{\alpha_{r-2}},
(57,5) *{\alpha_{r-1}},
(56,-5) *{\alpha_r},
\ar @{-} "a";"b",
\ar @{.} "b";"c",
\ar @{-} "c";"d",
\ar @{-} "d";"e",
\ar @{-} "d";"f",
\ar @{-} "b";"g"
\endxy
\]

\medskip\noindent
The coefficient of $\alpha_1$ in $\delta$ is equal to one and the corresponding Hermitian symmetric space is  the real Grassmann manifold $G_2^+(\RR^{2r}) = SO_{2r}/SO_{2r-2}SO_2$. Then
\begin{align*}
\Delta_M^+  & = \{\alpha_1 + \ldots + \alpha_\mu  : 1 \leq \mu \leq r\} 
 \cup  \{\alpha_1 + \ldots + \alpha_{r-2} + \alpha_r \} \\
& \qquad  \cup \{\alpha_1 + \ldots + \alpha_{\mu-1} + 2\alpha_\mu + \ldots + 2\alpha_{r-2} + \alpha_{r-1} + \alpha_r : 2 \leq \mu \leq r-2\}\\
& = \{ e_1 \pm e_\mu : 2 \leq \mu \leq r\} .
\end{align*}

The coefficient of $\alpha_r$ in $\delta$ is equal to one and the corresponding Hermitian symmetric space is $SO_{2r}/U_r$. (The Dynkin diagram symmetry implies that $\alpha_{r-1}$ leads to an isometric copy of $SO_{2r}/U_r$ and so we omit this case.) Then
\begin{align*}
\Delta_M^+  & =  \{\alpha_\nu + \ldots + \alpha_{r-2} + \alpha_r  : 1 \leq \nu \leq r-2\} \cup \{\alpha_r\}\\
& \qquad \cup \{\alpha_\nu + \ldots + \alpha_{\mu-1} + 2\alpha_\mu + \ldots + 2\alpha_{r-2} + \alpha_{r-1} + \alpha_r  : 1 \leq \nu < \mu \leq r-1\} \\
& = \{ e_\nu + e_\mu : 1 \leq \nu < \mu \leq r\} .
\end{align*}

\medskip
\item[($E_6$)] $V = \{v \in \RR^8 : \langle v,e_6-e_7 \rangle = \langle v , e_7 + e_8 \rangle = 0\}$;\\
$\Delta = \{\pm e_\nu \pm e_\mu : \nu < \mu \leq 5\} \cup \{ \frac{1}{2} \sum_{\nu=1}^8 (-1)^{n(\nu)}e_\nu \in V : \sum_{\nu=1}^8 n(\nu) \ {\rm even}\}$;\\
$\Delta^+ = \{e_\nu \pm e_\mu : \nu > \mu\} \cup \{ \frac{1}{2} (\sum_{\nu=1}^5 (-1)^{n(\nu)}e_\nu-e_6-e_7+e_8) : \sum_{\nu=1}^5 n(\nu) \ {\rm even}\}$;\\
$\alpha_1 = \frac{1}{2}(e_1-e_2-e_3-e_4-e_5-e_6-e_7+e_8)$,\\ 
$\alpha_2 = e_1 + e_2,\alpha_3 = e_2 - e_1,\alpha_4 = e_3 - e_2,\alpha_5 = e_4 - e_3,\alpha_6 = e_5 - e_4$;\\
$\delta = \alpha_1 + 2\alpha_2 + 2\alpha_3 + 3\alpha_4 + 2\alpha_5 + \alpha_6 = \frac{1}{2}(e_1 + e_2 + e_3 + e_4 + e_5 - e_6 - e_7 + e_8)$;\\
\[
\xy
\POS (0,0) *\cir<2pt>{} ="a",
(20,10) *\cir<2pt>{} = "b",
(10,0) *\cir<2pt>{}="c",
(20,0) *\cir<2pt>{}="d",
(30,0) *\cir<2pt>{}="e",
(40,0) *\cir<2pt>{}="f",
(20,20) *{\times}="g",
(0,-5) *{\alpha_1},
(25,10) *{\alpha_2},
(10,-5) *{\alpha_3},
(20,-5) *{\alpha_4},
(30,-5) *{\alpha_5},
(40,-5) *{\alpha_6},
\ar @{-} "a";"c",
\ar @{-} "c";"d",
\ar @{-} "b";"d",
\ar @{-} "d";"e",
\ar @{-} "e";"f",
\ar @{-} "b";"g"
\endxy
\]

\medskip\noindent
The coefficient of $\alpha_6$ in $\delta$ is equal to one and the corresponding Hermitian symmetric space is $E_6/Spin_{10}U_1$. (The Dynkin diagram symmetry implies that $\alpha_1$ leads to an isometric copy of $E_6/Spin_{10}U_1$ and so we omit this case.)  The set $\Delta_M^+$ consists of the following $16$ positive roots:
{\small
\[
\begin{matrix}
& & 0 & & \\
0 & 0 & 0 & 0 & 1
\end{matrix}
\qquad
\begin{matrix}
& & 0 & & \\
0 & 0 & 0 & 1 & 1
\end{matrix}
\qquad
\begin{matrix}
& & 0 & & \\
0 & 0 & 1 & 1 & 1
\end{matrix}
\qquad
\begin{matrix}
& & 0 & & \\
0 & 1 & 1 & 1 & 1
\end{matrix}
\]
\[ 
\begin{matrix}
& & 1 & & \\
0 & 0 & 1 & 1 & 1
\end{matrix}
\qquad
\begin{matrix}
& & 1 & & \\
0 & 1 & 1 & 1 & 1
\end{matrix}
\qquad 
\begin{matrix}
& & 1 & & \\
0 & 1 & 2 & 1 & 1
\end{matrix}
\qquad
\begin{matrix}
& & 1 & & \\
0 & 1 & 2 & 2 & 1
\end{matrix}
\]
\[
\begin{matrix}
& & 0 & & \\
1 & 1 & 1 & 1 & 1
\end{matrix}
\qquad
\begin{matrix}
& & 1 & & \\
1 & 1 & 1 & 1 & 1
\end{matrix}
\qquad
\begin{matrix}
& & 1 & & \\
1 & 1 & 2 & 1 & 1
\end{matrix}
\qquad
\begin{matrix}
& & 1 & & \\
1 & 2 & 2 & 1 & 1
\end{matrix}
\]
\[
\begin{matrix}
& & 1 & & \\
1 & 1 & 2 & 2 & 1
\end{matrix}
\qquad
\begin{matrix}
& & 1 & & \\
1 & 2 & 2 & 2 & 1
\end{matrix}
\qquad
\begin{matrix}
& & 1 & & \\
1 & 2 & 3 & 2 & 1
\end{matrix}
\qquad
\begin{matrix}
& & 2 & & \\
1 & 2 & 3 & 2 & 1
\end{matrix}
\]
}

Equivalently, $\Delta_M^+$ consists of
\begin{align*}
\Delta_M^+ & = \{e_5 \pm e_\mu: 5 > \mu\} \\
& \qquad \cup \left\{ \frac{1}{2} \left(\sum_{\nu=1}^4 (-1)^{n(\nu)}e_\nu+e_5-e_6-e_7+e_8\right) : \sum_{\nu=1}^4 n(\nu) \ {\rm even}\right\}.
\end{align*}

\medskip
\item[($E_7$)] $V = \{v \in \RR^8 : \langle v , e_7 + e_8 \rangle = 0\}$;\\
$\Delta = \{\pm e_\nu \pm e_\mu : \nu < \mu \leq 6\} \cup \{ \pm (e_7-e_8)\} \\ 
{} \hspace{30pt} \cup \{ \frac{1}{2} \sum_{\nu=1}^8 (-1)^{n(\nu)}e_\nu \in V : \sum_{\nu=1}^8 n(\nu) \ {\rm even}\}$;\\
$\Delta^+ = \{e_\nu \pm e_\mu : \nu > \mu\} \cup \{e_8 - e_7\} \cup \{ \frac{1}{2} (\sum_{\nu=1}^6 (-1)^{n(\nu)}e_\nu -e_7+e_8) : \sum_{\nu=1}^6 n(\nu) \ {\rm odd}\}$;\\
$\alpha_1 = \frac{1}{2}(e_1-e_2-e_3-e_4-e_5-e_6-e_7+e_8),\alpha_2 = e_1 + e_2$,\\
$\alpha_3 = e_2 - e_1,\alpha_4 = e_3 - e_2,\alpha_5 = e_4 - e_3,\alpha_6 = e_5 - e_4,\alpha_7 = e_6 - e_5$;\\
$\delta = 2\alpha_1 + 2\alpha_2 + 3\alpha_3 + 4\alpha_4 + 3\alpha_5 + 2\alpha_6 + \alpha_7 = e_8 - e_7$;\\
\[
\xy
\POS (0,0) *\cir<2pt>{} ="a",
(20,10) *\cir<2pt>{} = "b",
(10,0) *\cir<2pt>{}="c",
(20,0) *\cir<2pt>{}="d",
(30,0) *\cir<2pt>{}="e",
(40,0) *\cir<2pt>{}="f",
(50,0) *\cir<2pt>{}="g",
(-10,0) *{\times}="h",
(0,-5) *{\alpha_1},
(25,10) *{\alpha_2},
(10,-5) *{\alpha_3},
(20,-5) *{\alpha_4},
(30,-5) *{\alpha_5},
(40,-5) *{\alpha_6},
(50,-5) *{\alpha_7},
\ar @{-} "a";"c",
\ar @{-} "c";"d",
\ar @{-} "b";"d",
\ar @{-} "d";"e",
\ar @{-} "e";"f",
\ar @{-} "f";"g",
\ar @{-} "h";"a"
\endxy
\]

\medskip\noindent
The coefficient of $\alpha_7$ in $\delta$ is equal to one and the corresponding Hermitian symmetric space is $E_7/E_6U_1$. The set $\Delta_M^+$ consists of the following $27$ positive roots:
{\footnotesize
\[
\begin{matrix}
& & 0 & & & \\
0 & 0 & 0 & 0 & 0 & 1
\end{matrix}
\qquad
\begin{matrix}
& & 0 & & & \\
0 & 0 & 0 & 0 & 1 & 1
\end{matrix}
\qquad
\begin{matrix}
& & 0 & & & \\
0 & 0 & 0 & 1 & 1 & 1
\end{matrix}
\qquad
\begin{matrix}
& & 0 & & & \\
0 & 0 & 1 & 1 & 1 & 1
\end{matrix}
\]
\[
\begin{matrix}
& & 0 & & & \\
0 & 1 & 1 & 1 & 1 & 1
\end{matrix}
\qquad
\begin{matrix}
& & 1 & & & \\
0 & 0 & 1 & 1 & 1 & 1
\end{matrix}
\qquad
\begin{matrix}
& & 1 & & & \\
0 & 1 & 1 & 1 & 1 & 1
\end{matrix}
\qquad
\begin{matrix}
& & 1 & & & \\
0 & 1 & 2 & 1 & 1 & 1
\end{matrix}
\]
\[
\begin{matrix}
& & 1 & & & \\
0 & 1 & 2 & 2 & 1 & 1
\end{matrix}
\qquad
\begin{matrix}
& & 1 & & & \\
0 & 1 & 2 & 2 & 2 & 1
\end{matrix}
\qquad
\begin{matrix}
& & 0 & & & \\
1 & 1 & 1 & 1 & 1 & 1
\end{matrix}
\qquad
\begin{matrix}
& & 1 & & & \\
1 & 1 & 1 & 1 & 1 & 1
\end{matrix}
\]
\[
\begin{matrix}
& & 1 & & & \\
1 & 1 & 2 & 1 & 1 & 1
\end{matrix}
\qquad
\begin{matrix}
& & 1 & & & \\
1 & 1 & 2 & 2 & 1 & 1
\end{matrix}
\qquad
\begin{matrix}
& & 1 & & & \\
1 & 2 & 2 & 1 & 1 & 1
\end{matrix}
\qquad
\begin{matrix}
& & 1 & & & \\
1 & 1 & 2 & 2 & 2 & 1
\end{matrix}
\]
\[
\begin{matrix}
& & 1 & & & \\
1 & 2 & 2 & 2 & 1 & 1
\end{matrix}
\qquad
\begin{matrix}
& & 1 & & & \\
1 & 2 & 2 & 2 & 2 & 1
\end{matrix}
\qquad
\begin{matrix}
& & 1 & & & \\
1 & 2 & 3 & 2 & 1 & 1
\end{matrix}
\qquad
\begin{matrix}
& & 1 & & & \\
1 & 2 & 3 & 2 & 2 & 1
\end{matrix}
\]
\[
\begin{matrix}
& & 2 & & & \\
1 & 2 & 3 & 2 & 1 & 1
\end{matrix}
\qquad
\begin{matrix}
& & 1 & & & \\
1 & 2 & 3 & 3 & 2 & 1
\end{matrix}
\qquad
\begin{matrix}
& & 2 & & & \\
1 & 2 & 3 & 2 & 2 & 1
\end{matrix}
\qquad
\begin{matrix}
& & 2 & & & \\
1 & 2 & 3 & 3 & 2 & 1
\end{matrix}
\]
\[
\begin{matrix}
& & 2 & & & \\
1 & 2 & 4 & 3 & 2 & 1
\end{matrix}
\qquad
\begin{matrix}
& & 2 & & & \\
1 & 3 & 4 & 3 & 2 & 1
\end{matrix}
\qquad
\begin{matrix}
& & 2 & & & \\
2 & 3 & 4 & 3 & 2 & 1
\end{matrix}
\qquad
\begin{matrix}
& &  &  & & & & & \\
 &  &  &  &  & & & &
\end{matrix}
\]
}

Equivalently, $\Delta_M^+$ consists of
\begin{align*}
\Delta_M^+ & = \{e_6 \pm e_\mu : 6 > \mu\} \cup \{e_8 - e_7\}  \\
& \qquad \cup \left\{ \frac{1}{2} \left(\sum_{\nu=1}^5 (-1)^{n(\nu)}e_\nu +e_6 -e_7+e_8\right) : \sum_{\nu=1}^5 n(\nu) \ {\rm odd}\right\}.
\end{align*}

\end{itemize}

\section{Proof of Theorem \ref{classification}}

Let $M$ be a connected orientable real hypersurface in an irreducible Hermitian symmetric space $\bar{M}$ of compact type. Let $G = I^o(\bar{M})$, $o \in M$ and $K = G_o$ the isotropy group of $G$ at $o$.
We denote by $\cg$ and $\ck$ the Lie algebras of $G$ and $K$, respectively. We continue using the notations that we introduced in Section \ref{StcHss}.

Let $\cg = \ck \oplus \cp$ be the Cartan decomposition of $\cg$. Recall that
$\cp  = \bigoplus_{\alpha \in \Delta_M^+} \CC u_\alpha$.
By construction, the highest root $\delta$ is in $\Delta_M^+$ and so $u_\delta \in \cp$.

\begin{lm}\label{abelian} 
There exists a subset $\Omega$ of $\Delta_M^+$ with $| \Omega | = {\rm rk}(\bar{M})$ and $\delta \in \Omega$ so that $|u_\alpha| = |u_\beta|$ for all $\alpha,\beta \in \Omega$ and 
\[
\ca = \bigoplus_{\alpha \in \Omega} \RR u_\alpha
\]
is a maximal abelian subspace of $\cp$.
\end{lm}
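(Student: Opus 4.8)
The plan is to realize $\ca$ as the span of the root vectors $u_\gamma$ attached to a maximal family of pairwise strongly orthogonal roots in $\Delta_M^+$; this is the classical Harish-Chandra construction of a maximal abelian subspace of $\cp$ (see \cite{He01}).

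First I would translate the abelian condition into a root-theoretic one. For $\alpha,\beta \in \Delta_M^+$ with $\alpha \neq \beta$ we have the bracket relation
\[
[u_\alpha,u_\beta] = -N_{-\alpha,\beta}\,u_{\beta-\alpha},
\]
and $N_{-\alpha,\beta} = 0$ precisely when $\beta - \alpha \notin \Delta$. Since $(\alpha+\beta)(H^k) = 2$ forces $\alpha + \beta \notin \Delta$ for all $\alpha,\beta \in \Delta_M^+$, two such roots $\alpha,\beta$ are strongly orthogonal if and only if $\beta - \alpha \notin \Delta$, and this is exactly the condition $[u_\alpha,u_\beta] = 0$. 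Hence, for any subset $\Omega \subset \Delta_M^+$ of pairwise strongly orthogonal roots, the subspace $\ca = \bigoplus_{\alpha \in \Omega}\RR u_\alpha$ is abelian, and $\dim \ca = |\Omega|$ because root vectors belonging to distinct roots are linearly independent.

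Next I would build $\Omega$ inductively. Set $\gamma_1 = \delta$; this is legitimate since $\delta \in \Delta_M^+$, and it guarantees $\delta \in \Omega$ as required. Having chosen pairwise strongly orthogonal roots $\gamma_1,\ldots,\gamma_j$, let $\gamma_{j+1}$ be the highest root of $\Delta_M^+$ that is strongly orthogonal to each of $\gamma_1,\ldots,\gamma_j$, and stop when no such root remains. The resulting $\Omega = \{\gamma_1,\ldots,\gamma_s\}$ is strongly orthogonal and contains $\delta$, so $\ca$ is abelian. For maximality I would use that the maximal dimension of an abelian subspace of $\cp$ equals $\rk(\bar M)$ by the definition of the rank; it therefore suffices to prove $s = |\Omega| = \rk(\bar M)$, for then $\ca$ cannot be enlarged and is automatically maximal abelian. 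That a maximal system of strongly orthogonal noncompact positive roots has cardinality $\rk(\bar M)$ is part of the structure theory of Hermitian symmetric spaces; alternatively it can be read off directly from the explicit descriptions of $\Delta_M^+$ in Section \ref{StcHss}, e.g.\ $\Omega = \{e_\nu - e_{r+2-\nu} : 1 \leq \nu \leq k\}$ in type $A_r$, $\Omega = \{2e_\nu : 1 \leq \nu \leq r\}$ in type $C_r$, and $\Omega = \{e_1+e_2,\,e_1-e_2\}$ for the real Grassmannians $G_2^+$, with the exceptional cases handled similarly.

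Finally, the equal-norm condition is immediate from $B(u_\alpha,u_\alpha) = -2$ for all $\alpha \in \Delta$, which shows that every $u_\alpha$ has the same length; geometrically this reflects that the roots $\gamma_j$ produced by the construction all share the length of the highest root $\delta$. The main obstacle is the dimension count $|\Omega| = \rk(\bar M)$ and the maximality it yields; once this is in hand, the abelian property and the norm statement follow routinely from the bracket relation above and from $B(u_\alpha,u_\alpha) = -2$.
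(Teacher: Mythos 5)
Your route is sound and genuinely different from the paper's. The paper proves the lemma by brute force: it simply exhibits $\Omega$ for each pair of root system and distinguished simple root --- $\{e_1-e_{r+1},\ldots,e_k-e_{r+2-k}\}$ in type $(A_r)$, $\{2e_1,\ldots,2e_r\}$ in type $(C_r)$, $\{e_1+e_2,e_1-e_2\}$ for the real two-plane Grassmannians, and so on --- and declares the required properties straightforward to verify. You instead reduce commutativity of $\ca$ to strong orthogonality of the roots in $\Omega$ (a clean observation the paper leaves implicit: since $(\alpha+\beta)(H^k)=2$ rules out $\alpha+\beta\in\Delta$, one has $[u_\alpha,u_\beta]=0$ if and only if $\beta-\alpha\notin\Delta$), run the Harish-Chandra cascade starting from $\delta$, and quote the classical fact that a maximal system of strongly orthogonal noncompact positive roots has cardinality $\rk(\bar{M})$; maximality of $\ca$ then follows correctly from your dimension count, since every abelian subspace of $\cp$ lies in a maximal one and all maximal ones have dimension $\rk(\bar{M})$. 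This is a legitimate, more conceptual proof. What the paper's listing buys instead is self-containedness (no appeal to Harish-Chandra's theorem) and the fact that the explicit sets $\Omega$ are used again later in the classification; your explicit sets, where you give them, agree with the paper's.

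One point needs repair: the equal-norm condition. You justify it by the paper's property (8), $B(u_\alpha,u_\alpha)=-2$ for all $\alpha\in\Delta$; if that were literally true, the condition $|u_\alpha|=|u_\beta|$ would be vacuous for every subset $\Omega$, and the lemma would have no reason to state it. In fact $B(u_\alpha,u_\alpha) = -2B(e_\alpha,e_{-\alpha}) = -\frac{4}{(\alpha,\alpha)}$, so the norm of $u_\alpha$ depends on the root length, and property (8) can only hold for roots of one fixed length. This matters precisely in the non-simply-laced cases $(B_r)$ and $(C_r)$, where $\Delta_M^+$ contains roots of both lengths; the real content of the norm condition is that every root in $\Omega$ has the same length as $\delta$, i.e.\ is long, and this is exactly what the paper needs afterwards, when it uses transitivity of the Weyl group on roots of equal length to move $\alpha$ from Lemma \ref{normalproptoroot} to the highest root $\delta$. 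Your closing remark that the cascade produces roots sharing the length of $\delta$ is the correct reason, but it is asserted rather than proven; to close the gap, either invoke that this is part of the classical statement of Harish-Chandra's theorem for Hermitian symmetric spaces, or verify it on the explicit sets (which do consist of long roots: $e_1\pm e_2$ in $(B_r)$, $2e_\nu$ in $(C_r)$).
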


\begin{proof}
We construct $\Omega$ explicitly using the root systems that we introduced in Section \ref{StcHss}:
\begin{itemize}
\item[($A_r$)] For $\alpha_k$: $\Omega = \{e_1-e_{r+1},\ldots,e_k-e_{r+2-k} \}$;
\item[($B_r$)] For $\alpha_1$: $\Omega = \{e_1+e_2,e_1-e_2\}$;
\item[($C_r$)] For $\alpha_r$: $\Omega = \{2e_1,\ldots,2e_r\}$;
\item[($D_r$)] For $\alpha_1$: $\Omega = \{e_1+e_2,e_1-e_2\}$;\\
For $\alpha_r$: $\Omega = \{e_1+e_2,\ldots,e_{r-1} + e_r\}$ if $r$ is even and 
$\Omega = \{e_1+e_2,\ldots,e_{r-2} + e_{r-1}\}$ if $r$ is odd;
\item[($E_6$)] For $\alpha_6$: $\Omega = \{\frac{1}{2}(e_1 + e_2 + e_3 + e_4 + e_5 - e_6 - e_7 + e_8),e_5 - e_4\}$;
\item[($E_7$)] For $\alpha_7$: $\Omega = \{e_8-e_7,e_6-e_5,e_6+e_5\}$.
\end{itemize}
It is straightforward to verify the properties stated in the lemma.
\end{proof}

Since every tangent vector of $\bar{M}$ at $o$ lies in a maximal abelian subspace of $\cp$, and all maximal abelian subspaces of $\cp$ are congruent to each other, we can assume that $J\xi_o \in \ca$. Thus we can write
\[
J\xi_o = \sum_{\alpha \in \Omega} a_\alpha u_\alpha
\]
with some $a_\alpha \in \RR$.

\begin{lm}\label{accoeff}
If $x = \sum_{\alpha \in \Omega} c_\alpha u_\alpha \in \ca$ is perpendicular to $J\xi_o \in \ca$,
then $a_\alpha = 0$ or $c_\alpha = 0$ for each $\alpha \in \Omega$.
\end{lm}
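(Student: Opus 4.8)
The plan is to prove the equivalent statement $\sum_{\alpha\in\Omega}a_\alpha^2 c_\alpha^2 = 0$; since each summand is nonnegative, this forces $a_\alpha c_\alpha = 0$ for every $\alpha\in\Omega$, which is the assertion. The two ingredients are Proposition \ref{IRF2} and an explicit diagonalization of the normal Jacobi operator $\bar{R}_{J\xi_o}$ on $\ca \oplus J\ca$.

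First I would check that $x\in{\mathcal C}_o$, so that Proposition \ref{IRF2} applies to $x$. Since $\ca$ is a maximal abelian subspace of $\cp$ it is totally real, hence $J\ca \perp \ca$; as $\xi_o = -J(J\xi_o)\in J\ca$ and $x\in\ca$, this gives $g(x,\xi_o)=0$, while $g(x,J\xi_o)=0$ holds by hypothesis. Thus $x\perp\xi_o$ and $x\perp J\xi_o$, i.e.\ $x\in{\mathcal C}_o$.

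Next I would compute $\bar{R}_{J\xi_o}$ from the symmetric-space curvature formula $\bar{R}(X,Y)Z = -[[X,Y],Z]$ on $\cp$ together with the Chevalley relations of Section \ref{StcHss}. Because $\ca$ is abelian, $[u_\gamma,J\xi_o] = \sum_{\beta} a_\beta [u_\gamma,u_\beta] = 0$, so $\bar{R}_{J\xi_o}u_\gamma = 0$ for each $\gamma\in\Omega$; in particular $\bar{R}_{J\xi_o}x = 0$. For $v_\gamma$ I use $[u_\gamma,v_\gamma]=2ih_\gamma$ and $[u_\alpha,v_\gamma]=0$ for $\alpha\neq\gamma$ in $\Omega$ (valid since $\gamma\pm\alpha\notin\Delta$) to get $[v_\gamma,J\xi_o] = -2ia_\gamma h_\gamma$; applying $\ad(J\xi_o)$ once more, using $[h_\gamma,u_\beta] = -ic_{\beta,\gamma}v_\beta$ and the key fact that $c_{\beta,\gamma}=0$ for distinct $\beta,\gamma\in\Omega$ (the $\gamma$-string through $\beta$ is trivial because $\beta\pm\gamma\notin\Delta$), only the $\beta=\gamma$ term survives and I obtain $\bar{R}_{J\xi_o}v_\gamma = 4a_\gamma^2 v_\gamma$.

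Finally, from $\bar{R}_{J\xi_o}x=0$ I have $g(\bar{R}_{J\xi_o}x,x)=0$, and Proposition \ref{IRF2} rewrites this as $g(\bar{R}_{J\xi_o}Jx,Jx)=0$. Since $Jx = \sum_{\alpha} c_\alpha v_\alpha$, the eigenvalue computation above and $g(v_\alpha,v_\beta)=2\delta_{\alpha\beta}$ give $g(\bar{R}_{J\xi_o}Jx,Jx) = 8\sum_{\alpha\in\Omega} a_\alpha^2 c_\alpha^2$, whence $\sum_\alpha a_\alpha^2 c_\alpha^2 = 0$ and the lemma follows. The step requiring most care is the eigenvalue computation---especially the vanishing of the Cartan integers $c_{\beta,\gamma}$ between distinct roots of $\Omega$ and fixing the sign convention so that the eigenvalues come out as $4a_\gamma^2\geq 0$ (as they must for compact type); once these are settled, the nonnegativity of the summands closes the argument at once.
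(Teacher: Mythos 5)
Your proof is correct and takes essentially the same route as the paper's: both rest on $\bar{R}_{J\xi_o}x = 0$ (from the abelian-ness of $\ca$), on Proposition \ref{IRF2}, and on the same Chevalley-basis double-bracket computation, whose key input is the strong orthogonality of the roots in $\Omega$ (so that $[u_\alpha,v_\beta]$ and $\beta(h_\gamma)$ vanish for distinct elements of $\Omega$). The only difference is the endgame: the paper upgrades Proposition \ref{IRF2} to the vector equation $\bar{R}_{J\xi_o}Jx=0$ (hence $\bar{R}_{\xi_o}x=0$) and concludes from $0=\sum_\beta c_\beta a_\beta^2 u_\beta$ by linear independence, whereas you stay with the scalar identity $g(\bar{R}_{J\xi_o}Jx,Jx)=0$ and conclude by nonnegativity after computing the eigenvalues $4a_\gamma^2$ of $\bar{R}_{J\xi_o}$ on $J\ca$ --- both endgames are valid.
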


\begin{proof}
Note that $x \in \ca \cap {\mathcal C}_o$. 
Since both $J\xi_o$ and $x$ are in $\ca$, we have $\bar{R}_{J\xi_o}x = 0$. Since $x \in {\mathcal C}_o$, we also have $\bar{R}_{J\xi_o}Jx = 0$ by Proposition \ref{IRF2}. This implies $\bar{R}_{\xi_o}x = 0$, as the curvature tensor in a K\"{a}hler manifold is $J$-invariant. We have
\[
\xi_o = -J(J\xi_o) = - \sum_{\alpha \in \Omega} a_\alpha Ju_\alpha = - \sum_{\alpha \in \Omega} a_\alpha v_\alpha
\]
and therefore
\[
0 = \bar{R}_{\xi_o}x
= -[[x,\xi_o],\xi_o] 
= -\sum_{\alpha,\beta,\gamma \in \Omega} c_\alpha a_\beta a_\gamma  [[u_\alpha,v_\beta],v_\gamma].
\]
We work out the right-hand side of this equation in more detail. Firstly, we have
\[
[u_\alpha,v_\beta] = 
\begin{cases}
0 & \mbox{if } \alpha \neq \beta, \\
2ih_\alpha & \mbox{if } \alpha = \beta.
\end{cases}
\]
Inserting this into the previous equation yields 
\[
0 = \sum_{\beta,\gamma \in \Omega} c_\beta a_\beta a_\gamma  [ih_\beta,v_\gamma].
\]
Next, we have
\[
[ih_\beta,v_\gamma]  = i\gamma(ih_\beta)u_\gamma = - \gamma(h_\beta)u_\gamma 
 = - 2 \frac{(\gamma,\beta)}{(\beta,\beta)}u_\gamma
= \begin{cases}
0  & \mbox{if } \beta \neq \gamma, \\
-2u_\gamma & \mbox{if } \beta = \gamma.
\end{cases}
\]
Inserting this into the previous equation implies $0 = \sum_{\beta \in \Omega} c_\beta a_\beta^2  u_\beta$.
Since the vectors $u_\beta$, $\beta \in \Omega$, are linearly independent, we get $c_\beta = 0$ or $a_\beta = 0$ for each $\beta \in \Omega$. 
\end{proof}

\begin{lm}\label{normalproptoroot}
$J\xi_o$ is proportional to $u_\alpha$ for some $\alpha \in \Omega$.
\end{lm}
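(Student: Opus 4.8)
The plan is to argue by contradiction, showing that if $J\xi_o$ had two nonzero coefficients in the expansion over $\Omega$, then one could manufacture a vector $x \in \ca$ perpendicular to $J\xi_o$ that violates the dichotomy established in Lemma \ref{accoeff}. Recall that we have written $J\xi_o = \sum_{\alpha \in \Omega} a_\alpha u_\alpha$ with $a_\alpha \in \RR$. To say that $J\xi_o$ is proportional to some $u_\alpha$ is exactly to say that at most one of the coefficients $a_\alpha$ is nonzero. So I would assume, for contradiction, that there exist two distinct roots $\alpha_1, \alpha_2 \in \Omega$ with $a_{\alpha_1} \neq 0$ and $a_{\alpha_2} \neq 0$.

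The key construction is the antisymmetric combination
\[
x = a_{\alpha_2} u_{\alpha_1} - a_{\alpha_1} u_{\alpha_2} \in \ca .
\]
To check that $x$ is perpendicular to $J\xi_o$, I would invoke two facts already at our disposal. First, by properties (7) and (8) of Section \ref{StcHss}, the vectors $u_\alpha$ with $\alpha \in \Delta_M^+$ are mutually orthogonal (any two distinct elements of $\Omega \subset \Delta_M^+$ are positive roots, so in particular $\beta \neq \pm\alpha$). Second, by Lemma \ref{abelian} all the $u_\alpha$ with $\alpha \in \Omega$ have the same norm. Using orthogonality one finds
\[
g(x,J\xi_o) = a_{\alpha_2} a_{\alpha_1} |u_{\alpha_1}|^2 - a_{\alpha_1} a_{\alpha_2} |u_{\alpha_2}|^2 = a_{\alpha_1} a_{\alpha_2}\bigl( |u_{\alpha_1}|^2 - |u_{\alpha_2}|^2 \bigr) = 0,
\]
where the last equality is precisely the equal-norm property from Lemma \ref{abelian}.

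Having produced $x \perp J\xi_o$, I would expand $x = \sum_{\alpha \in \Omega} c_\alpha u_\alpha$ and read off $c_{\alpha_1} = a_{\alpha_2} \neq 0$. But we also assumed $a_{\alpha_1} \neq 0$, so for the index $\alpha_1$ we have both $a_{\alpha_1} \neq 0$ and $c_{\alpha_1} \neq 0$, which directly contradicts Lemma \ref{accoeff}. This forces at most one of the $a_\alpha$ to be nonzero, so $J\xi_o$ is proportional to a single $u_\alpha$.

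I do not expect a serious obstacle here; the only point requiring insight is recognizing that the equal-norm condition built into $\Omega$ in Lemma \ref{abelian} is exactly what is needed to make the antisymmetric combination $x$ orthogonal to $J\xi_o$. In other words, $\Omega$ was engineered with equal norms precisely so that this step goes through, and the rest is a short orthogonality computation feeding into Lemma \ref{accoeff}.
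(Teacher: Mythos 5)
Your proof is correct, and it rests on the same pillar as the paper's: produce a vector $x \in \ca$ perpendicular to $J\xi_o$ whose expansion over $\Omega$ has a nonzero coefficient at some root where $a_\alpha \neq 0$, then contradict Lemma \ref{accoeff}. The difference lies in how $x$ is produced. You build it explicitly as the antisymmetric combination $x = a_{\alpha_2}u_{\alpha_1} - a_{\alpha_1}u_{\alpha_2}$, and verifying $x \perp J\xi_o$ then requires two metric inputs: the mutual orthogonality of the $u_\alpha$ (property (7) of Section \ref{StcHss}) and the equal-norm condition of Lemma \ref{abelian}. The paper needs neither: it sets $\Omega_0 = \{\alpha \in \Omega : a_\alpha \neq 0\}$ and $\ca_0 = \bigoplus_{\alpha \in \Omega_0}\RR u_\alpha$, and observes that if $\dim \ca_0 \geq 2$, then the orthogonal complement of $\RR J\xi_o$ inside $\ca_0$ is nonzero; any nonzero $x$ there is supported on $\Omega_0$ by linear independence alone, and Lemma \ref{accoeff} gives the same contradiction. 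So the abstract dimension count buys the perpendicular vector for free, with no normalization of the basis, while your version makes the perpendicular vector completely concrete at the cost of invoking the extra structure. This also corrects your closing remark: the equal-norm condition in Lemma \ref{abelian} is not ``engineered'' for this step --- the paper's own proof of the lemma never touches it. Where it genuinely earns its keep is immediately \emph{after} the lemma, when the Weyl group, which acts transitively only on roots of the same length, is used to move the root $\alpha$ found here to the highest root $\delta$.
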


\begin{proof}
Let $\Omega_0 = \{\alpha \in \Omega : a_\alpha \neq 0\}$ and define a subspace $\ca_0$ of $\ca$ by
\[
\ca_0 = \bigoplus_{\alpha \in \Omega_0} \RR u_\alpha.
\]
Let $d = \dim(\ca_0)$ and assume that $d \geq 2$. Then we can find a vector $0 \neq x \in \ca_0$ that is perpendicular to $J\xi_o$. Since $x \in \ca$, we can write $x = \sum_{\alpha \in \Omega} c_\alpha u_\alpha$. Moreover, since $x \in \ca_0$, we have $c_\alpha =0$ for all $\alpha \notin \Omega_0$. 

As $x \neq 0$, we must have $c_\alpha \neq 0$ for some $\alpha \in \Omega_0$. According to Lemma \ref{accoeff} this implies $a_\alpha = 0$, which contradicts $\alpha \in \Omega_0$. It follows that $d = 1$, which means that $J\xi_o$ is proportional to  $u_\alpha$ for some $\alpha \in \Omega$. 
\end{proof}

The Weyl group of $\Delta$ acts transitively on roots of the same length. By a suitable transformation in the Weyl group we can arrange that $\alpha \in \Omega$ ($\alpha$ as in Lemma \ref{normalproptoroot}) becomes the highest root $\delta$. We can thus assume, without loss of generality, that $J\xi_o  = \frac{1}{\sqrt{2}}u_\delta$.

\medskip
We now define
\[
\Delta_M^+(0) = \{\alpha \in \Delta_M^+ : \delta - \alpha \notin \Delta^+\cup \{0\}\}\ ,\ 
\Delta_M^+(1) = \{\alpha \in \Delta_M^+ : \delta - \alpha \in \Delta^+\},
\]
which gives the disjoint union
\[
\Delta_M^+ = \Delta_M^+(0) \cup \Delta_M^+(1) \cup \{\delta\}.
\]
Explicitly, we have
\begin{itemize}
\item[($A_r$)] $\Delta_M^+(0) = \emptyset$ if $k =1$;\\
$\Delta_M^+(0)  = \{e_\nu - e_{\mu+1} : 2 \leq \nu \leq k \leq \mu \leq r-1\}$ if $k \geq 2$;
\item[($B_r$)] $\Delta_M^+(0) = \{e_1 - e_2\}$;
\item[($C_r$)] $\Delta_M^+(0) =\{ e_\nu + e_\mu : 2 \leq \nu < \mu \leq r\} \cup \{2e_\nu: 2 \leq \nu \leq r\}$;
\item[($D_r$)] $\Delta_M^+(0) = \{e_1 - e_2\}$ if $\bar{M} = G_2^+(\RR^{2r})$;\\
$\Delta_M^+(0)  = \{ e_\nu + e_\mu : 3 \leq \nu < \mu \leq r\}$ if $\bar{M} = SO_{2r}/U_r$;
\item[($E_6$)] $\Delta_M^+(0)  
= \{e_5-e_4,e_5-e_3,e_5-e_2,e_5-e_1,\frac{1}{2}(-e_1-e_2-e_3-e_4+e_5-e_6-e_7+e_8)\}$;
\item[($E_7$)] $\Delta_M^+(0) = \{e_6 \pm e_j : 6 > j\}$.
\end{itemize}
The set $\Delta_M^+(1)$ can be worked out explicitly from $\Delta_M^+$ and $\Delta_M^+(0)$ by using $\Delta_M^+(1) = \Delta_M^+ \setminus (\Delta_M^+(0) \cup \{\delta\})$.

\medskip
These sets  arise naturally as eigenspaces of the Jacobi operator $\bar{R}_{J\xi_o}$.

\begin{lm}\label{nJo} The Jacobi operator $\bar{R}_{J\xi_o}$ is given by
\[
\bar{R}_{J\xi_o}x = 
\begin{cases}
0 & ,\ {\rm if}\ x \in \RR u_\delta \oplus \bigoplus_{\alpha \in \Delta_M^+(0)} \CC u_\alpha\\
\frac{1}{2}x & ,\ {\rm if}\ x \in \bigoplus_{\alpha \in \Delta_M^+(1)} \CC u_\alpha   \\
2x & ,\ {\rm if}\ x \in \RR v_\delta.
\end{cases}
\]
\end{lm}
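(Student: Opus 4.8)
The plan is to compute $\bar{R}_{J\xi_o}$ directly on a basis of $\cp$ by exploiting the symmetric-space formula for the curvature tensor, exactly as was done in the proof of Lemma~\ref{accoeff}. Since $J\xi_o = \frac{1}{\sqrt{2}}u_\delta$, for every $x \in \cp$ we have
\[
\bar{R}_{J\xi_o}x = -[[x,J\xi_o],J\xi_o] = -\tfrac{1}{2}[[x,u_\delta],u_\delta].
\]
The operator $\bar{R}_{J\xi_o}$ is a symmetric endomorphism of $\cp$, and the vectors $u_\alpha,v_\alpha$ ($\alpha \in \Delta_M^+$) form a basis of $\cp$, so it suffices to evaluate the iterated bracket on each $u_\alpha$ and each $v_\alpha$ and to show the outcome matches the three cases in the claim. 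The whole computation rests on the bracket relations (1)--(6) together with the key structural fact that $\alpha + \beta \notin \Delta$ for all $\alpha,\beta \in \Delta_M^+$, which gives the reduced formulae (4$'$)--(6$'$); in particular $[u_\alpha,u_\delta] = -N_{-\alpha,\delta}u_{\delta-\alpha}$, and this vanishes precisely when $\delta - \alpha \notin \Delta$.

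The dichotomy defining $\Delta_M^+(0)$ and $\Delta_M^+(1)$ is exactly this vanishing criterion: since $\delta$ is the highest root, $\alpha - \delta$ is never a positive root, so for $\alpha \neq \delta$ one has $\delta - \alpha \in \Delta$ if and only if $\delta - \alpha \in \Delta_K^+$, which is the defining condition for $\alpha \in \Delta_M^+(1)$. First I would dispose of the eigenvalue-$0$ block: for $x = u_\delta$ the bracket $[u_\delta,u_\delta]$ vanishes, and for $\alpha \in \Delta_M^+(0)$ both $[u_\alpha,u_\delta]$ and $[v_\alpha,u_\delta]$ vanish because $\delta - \alpha \notin \Delta$ forces $N_{-\alpha,\delta} = N_{-\delta,\alpha} = 0$. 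This gives $\bar{R}_{J\xi_o}x = 0$ on $\RR u_\delta \oplus \bigoplus_{\alpha \in \Delta_M^+(0)}\CC u_\alpha$. Next, for the eigenvalue-$2$ vector $v_\delta$ I would use the coroot relation $[u_\delta,v_\delta] = 2ih_\delta$ together with $[h_\delta,u_\delta] = -i\delta(h_\delta)v_\delta = -2iv_\delta$ (as $\delta(h_\delta)=2$); the two nested brackets then return $[[v_\delta,u_\delta],u_\delta] = -4v_\delta$, so $\bar{R}_{J\xi_o}v_\delta = 2v_\delta$.

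The substantive case is $\alpha \in \Delta_M^+(1)$, where $\delta - \alpha \in \Delta_K^+$. Here $[u_\alpha,u_\delta] = -N_{-\alpha,\delta}u_{\delta-\alpha}$, and a second bracket with $u_\delta$ produces, via relation (4), a term in $u_{2\delta-\alpha}$ (which drops out since $2\delta-\alpha = \delta + (\delta-\alpha) \notin \Delta$ as $\delta$ is highest) and a term in $u_\alpha$ with coefficient $-N_{\alpha-\delta,\delta}$. Thus $[[u_\alpha,u_\delta],u_\delta] = N_{-\alpha,\delta}N_{\alpha-\delta,\delta}\,u_\alpha$, and $\bar{R}_{J\xi_o}u_\alpha = -\tfrac{1}{2}N_{-\alpha,\delta}N_{\alpha-\delta,\delta}\,u_\alpha$. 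The main obstacle is pinning this coefficient to exactly $\tfrac{1}{2}$: this is where I would invoke the string identity. Because $\delta + \alpha \notin \Delta$, the $(-\alpha)$-string through $\delta$ begins at $\delta$, so $N_{-\alpha,\delta} = \pm 1$; and the antisymmetry/cyclic relations in property~(1) of the $N_{\alpha,\beta}$ give $N_{\alpha-\delta,\delta} = -N_{-\alpha,\delta}$, whence $N_{-\alpha,\delta}N_{\alpha-\delta,\delta} = -N_{-\alpha,\delta}^2 = -1$ and $\bar{R}_{J\xi_o}u_\alpha = \tfrac{1}{2}u_\alpha$. The parallel computation for $v_\alpha$ uses relation (6) and the analogous facts $N_{-\delta,\alpha} = \pm 1$, $N_{-\delta,\delta-\alpha} = \pm 1$ (each string starting at its base root since $\alpha+\delta, 2\delta-\alpha \notin \Delta$), and property~(1) reduces $N_{-\delta,\alpha}N_{-\delta,\delta-\alpha}$ to $-N_{\alpha,\delta-\alpha}^2 = -1$, again yielding eigenvalue $\tfrac{1}{2}$. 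The only real care needed throughout is the sign bookkeeping of the structure constants and the repeated use of $\delta$ being the highest root to kill the spurious $u_{2\delta-\alpha}$ and $u_{\delta+\alpha}$ terms; once the constants are normalized via $N_{\pm}^2 = 1$, the three eigenvalues $0,\tfrac12,2$ fall out on the stated subspaces.
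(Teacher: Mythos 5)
Your computation follows the same route as the paper's proof: write $\bar{R}_{J\xi_o} = \tfrac{1}{2}\bar{R}_{u_\delta}$, evaluate $-[[\,\cdot\,,u_\delta],u_\delta]$ on the basis $u_\alpha, v_\alpha$, kill the spurious terms $u_{\delta+\alpha}$, $u_{2\delta-\alpha}$ because $\delta$ is the highest root, and reduce the $\Delta_M^+(1)$-block to a product of two structure constants, each $\pm 1$ because the relevant root strings start at their base. Your treatment of $\RR u_\delta$, of $\CC u_\alpha$ for $\alpha \in \Delta_M^+(0)$, and of $v_\delta$ is exactly the paper's and is correct. The one genuine divergence is how the sign of that product is fixed: the paper notes that $g(\bar{R}_{u_\delta}u_\alpha,u_\alpha) \geq 0$ since $\bar{M}$ is of compact type (nonnegative sectional curvature), so a product of two numbers that are each $\pm 1$ must equal $+1$; you instead try to pin the sign purely algebraically from the cyclic relations in property (1).

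That algebraic step has a gap in the non-simply-laced cases ($B_r$) and ($C_r$). The unnormalized cyclic identity $N_{\alpha,\beta} = N_{\beta,-\alpha-\beta}$ is only valid when the relevant roots have equal length (the correct general statement is property (2), which carries the length factors), and your triples contain the long root $\delta$ together with possibly short roots. Concretely, in ($C_r$) take $\alpha = e_1+e_2 \in \Delta_M^+(1)$, so $\delta = 2e_1$ and $\delta-\alpha = e_1-e_2$: the $\alpha$-string through $\delta-\alpha$ is $-2e_2,\ e_1-e_2,\ 2e_1$, hence $N_{\alpha,\delta-\alpha} = \pm 2$, and your claim $-N_{\alpha,\delta-\alpha}^2 = -1$ in the $v_\alpha$ computation is false; the same happens in ($B_r$) for the short root $\alpha = e_1$. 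Your $u_\alpha$-identity $N_{\alpha-\delta,\delta} = -N_{-\alpha,\delta}$ does survive, but only because $(\alpha,\alpha) = (\delta-\alpha,\delta-\alpha)$ holds for every $\alpha \in \Delta_M^+(1)$ in all the Hermitian cases, a fact you use implicitly and never verify. The conclusion is still correct and the repair is short: you have already shown $N_{-\delta,\alpha} = \pm 1$ and $N_{-\delta,\delta-\alpha} = \pm 1$, and property (2) gives $N_{-\delta,\alpha}N_{-\delta,\delta-\alpha} = -\frac{(\alpha,\alpha)(\delta-\alpha,\delta-\alpha)}{(\delta,\delta)^2}N_{\alpha,\delta-\alpha}^2 < 0$, so the product must be $-1$; alternatively, simply adopt the paper's curvature-positivity argument, which bypasses all length bookkeeping.
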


\begin{proof}
Recall that the Riemannian curvature tensor $\bar{R}$ on $\bar{M}$ satisfies
\[
\bar{R}(x,y)z = -[[x,y],z]\ \ (x,y,z \in T_o\bar{M} \cong \cp).
\]
We have $\bar{R}_{u_\delta}u_\delta  = 0$, since $\bar{R}$ is skew-symmetric in the first two variables. Next,
\[
\bar{R}_{u_\delta}v_\delta = - [[v_\delta,u_\delta],u_\delta] = 2i[h_\delta,u_\delta] = 2\delta(h_\delta)v_\delta = 4v_\delta.
\]
If $\alpha \in \Delta_M^+(0)$, we have $\bar{R}_{u_\delta}u_\alpha = 0$ and $\bar{R}_{u_\delta}v_\alpha = 0$. Finally, if $\alpha \in \Delta_M^+(1)$, we get
\begin{align*}
\bar{R}_{u_\delta}u_\alpha & = -[[u_\alpha,u_\delta],u_\delta] = [[u_\delta,u_\alpha],u_\delta] = - N_{-\delta,\alpha}[u_{\alpha-\delta},u_\delta] = N_{-\delta,\alpha}[u_\delta,u_{\alpha-\delta}] \\
& = N_{-\delta,\alpha}N_{\delta,\alpha-\delta}u_\alpha. 
\end{align*}
The coefficient $p_{-\delta,\alpha}$ in the $(-\delta)$-string containing $\alpha$ must be zero, which implies $N_{-\delta,\alpha} = \pm 1$. Similarly, the coefficient $p_{\delta,\alpha-\delta}$ in the $\delta$-string containing $\alpha-\delta$ must be zero, which implies $N_{\delta,\alpha-\delta} = \pm 1$. Since $\bar{M}$ has nonnegative sectional curvature, we get $g(\bar{R}_{u_\delta}u_\alpha,u_\alpha) \geq 0$, which implies $N_{-\delta,\alpha}N_{\delta,\alpha-\delta} = 1$. It follows that $\bar{R}_{u_\delta}u_\alpha = u_\alpha$. Analogously, we can prove
\begin{align*}
\bar{R}_{u_\delta}v_\alpha & = -[[v_\alpha,u_\delta],u_\delta] = [[u_\delta,v_\alpha],u_\delta] = - N_{-\delta,\alpha}[v_{\alpha-\delta},u_\delta] = N_{-\delta,\alpha}[u_\delta,v_{\alpha-\delta}] \\
& =  N_{-\delta,\alpha}N_{\delta,\alpha-\delta}v_\alpha = v_\alpha.
\end{align*}
Taking into account that $J\xi_o = \frac{1}{\sqrt{2}}u_\delta$, we get the desired expression for $\bar{R}_{J\xi_o}$.
\end{proof}

We now give a geometric interpretation of the sets $\Delta_M^+(0)$ and $\Delta_M^+(1)$.
For this we decompose $T_o\bar{M} = \cp  = \bigoplus_{\alpha \in \Delta_M^+} \CC u_\alpha$ into
\[
\cp = \cp(0) \oplus \cp(1) \oplus \CC u_\delta
\]
with
\[
\cp(0) = \bigoplus_{\alpha \in \Delta_M^+(0)} \CC u_\alpha\ ,\ \cp(1) = \bigoplus_{\alpha \in \Delta_M^+(1)} \CC u_\alpha.
\]
Let $\ck(0) = [\cp(0),\cp(0)]$ and $\cg(0) = \ck(0) \oplus \cp(0)$. Explicitly, we have
\begin{itemize}
\item[($A_r$)] $\ck(0) = \{0\}$ if $k = 1$;\\
$\ck(0) = \RR i h_{\alpha_k} \oplus \cs\cu_{k-1} \oplus \cs\cu_{r-k} \cong \cu_1 \oplus \cs\cu_{k-1} \oplus \cs\cu_{r-k}$ if $k \geq 2$, where $\cs\cu_{k-1}$ is generated by the simple roots $\alpha_2,\ldots,\alpha_{k-1}$ and $\cs\cu_{r-k}$ is generated by the simple roots $\alpha_{k+1},\ldots,\alpha_{r-1}$;
\item[($B_r$)] $\ck(0) = \RR i h_{\alpha_1} \cong \cu_1$;
\item[($C_r$)] $\ck(0) = \RR i h_{\alpha_r} \oplus \cs\cu_{r-1} \cong \cu_{r-1}$, where $\cs\cu_{r-1}$ is generated by the simple roots $\alpha_2,\ldots,\alpha_{r-1}$;
\item[($D_r$)] $\ck(0) = \RR i h_{\alpha_1} \cong \cu_1$  if $\bar{M} = G_2^+(\RR^{2r})$;\\
$\ck(0) = \RR i h_{\alpha_r} \oplus \cs\cu_{r-2} \cong \cu_{r-2}$ if $\bar{M} = SO_{2r}/U_r$, where $\cs\cu_{r-2}$ is generated by the simple roots $\alpha_3,\ldots,\alpha_{r-1}$;
\item[($E_6$)] $\ck(0) = \RR i h_{\alpha_6} \oplus \cs\cu_5 \cong \cu_5$, where $\cs\cu_5$ is generated by the simple roots $\alpha_1,\alpha_3,\alpha_4,\alpha_5$;
\item[($E_7$)] $\ck(0) = \RR i h_{\alpha_7} \oplus \cs\co_{10} \cong \cs\co_2 \oplus \cs
\co_{10}$, where $\cs\co_{10}$ is generated by the simple roots $\alpha_2,\alpha_3,\alpha_4,\alpha_5,\alpha_6$;
\end{itemize}
and
\begin{itemize}
\item[($A_r$)] $\cg(0) = \{0\}$ if $k = 1$ and $\cg(0) \cong \cs\cu_{r-1}$ if $k \geq 2$;
\item[($B_r$)] $\cg(0) \cong \cs\cu_2$;
\item[($C_r$)] $\cg(0) \cong \cs\cp_{r-1}$;
\item[($D_r$)] $\cg(0) \cong \cs\cu_2$ if $\bar{M} = G_2^+(\RR^{2r})$;\\
$\cg(0) \cong \cs\co_{2r-4}$ if $\bar{M} = SO_{2r}/U_r$;
\item[($E_6$)] $\cg(0) \cong \cs\cu_6$;
\item[($E_7$)] $\cg(0) \cong \cs\co_{12}$.
\end{itemize}
The Cartan decomposition of the semisimple Lie algebra $\cg(0)$ is $\cg(0)  = \ck(0) \oplus \cp(0)$.
Let $G(0)$ be the connected closed subgroup of $G$ with Lie algebra $\cg(0)$. Since $\cg(0)  = (\cg(0) \cap \ck) \oplus (\cg(0) \cap \cp)$, the orbit $\Sigma(0) = G(0) \cdot o = G(0)/K(0)$ of $G(0)$ containing $o$ is a totally geodesic submanifold of $\bar{M}$. Explicitly, we have
\begin{itemize}
\item[($A_r$)] $\Sigma(0) \cong G_{k-1}(\CC^{r-1})$;
\item[($B_r$)] $\Sigma(0) \cong \CC P^1$;
\item[($C_r$)] $\Sigma(0) \cong Sp_{r-1}/U_{r-1}$;
\item[($D_r$)] $\Sigma(0) \cong \CC P^1$ if $\bar{M} = G_2^+(\RR^{2r})$;\\
$\Sigma(0) \cong SO_{2r-4}/U_{r-2}$ if $\bar{M} = SO_{2r}/U_r$;
\item[($E_6$)] $\Sigma(0) \cong \CC P^5$;
\item[($E_7$)] $\Sigma(0) \cong G_2^+(\RR^{12})$.
\end{itemize}
In particular, $\cp(0)$ is a Lie triple system in $\cp$. Obviously, $\CC u_\delta$ is a Lie triple system in $\cp$ and the corresponding totally geodesic submanifold of $\bar{M}$ is $\CC P^1$. The subspace $\CC u_\delta \oplus \cp(0)$ is also a Lie triple system in $\cp$ and the corresponding totally geodesic submanifold of $\bar{M}$ is $\CC P^1 \times \Sigma(0)$. Geometrically, $\CC P^1 \times \Sigma(0)$ is a meridian in $\bar{M}$ (see \cite{CN78}). The complementary subspace $\cp(1)$ is also a Lie triple system in $\cp$ and the corresponding totally geodesic submanifold $\Sigma(1)$ of $\bar{M}$ is a polar of $\bar{M}$ (see \cite{CN78}). Explicitly, we have
\begin{itemize}
\item[($A_r$)] $\Sigma(1) \cong \CC P^{k-1} \times \CC P^{r-k}$;
\item[($B_r$)] $\Sigma(1) \cong G_2^+(\RR^{2r-1})$;
\item[($C_r$)] $\Sigma(1) \cong \CC P^{r-1}$;
\item[($D_r$)]  $\Sigma(1) \cong G_2^+(\RR^{2r-2})$ if $\bar{M} = G_2^+(\RR^{2r})$;\\
$\Sigma(1) \cong G_2(\CC^r)$ if $\bar{M} = SO_{2r}/U_r$;
\item[($E_6$)] $\Sigma(1) \cong SO_{10}/U_5$;
\item[($E_7$)] $\Sigma(1) \cong E_6/Spin_{10}U_1$.
\end{itemize}
The sets $\Delta_M^+(0)$ and $\Delta_M^+(1)$ are therefore intimately related to a particular polar/meridian configuration in the Hermitian symmetric space $\bar{M}$. 

\medskip
We now use Jacobi field theory to investigate the structure of the focal sets of $M$ (see \cite{BCO16}, Section 10.2.2, for details about the methodology). From Corollary \ref{IRF7} we know that $M$ is curvature-adapted and thus we can diagonalize the shape operator $S$ and the normal Jacobi operator $\bar{R}_{J\xi_o}$ simultaneously. In particular, the maximal complex subbundle ${\mathcal C}$ of $TM$ is invariant under the shape operator $S$ of $M$. Note that
${\mathcal C}_o = \cp(0) \oplus \cp(1)$.

Let $X \in \cp(0)$ with $SX = x X$. Then we also have $S\phi X = x \phi X$, since $S\phi = \phi S$ by Proposition \ref{isomcomm}. From Corollary \ref{IRF6} we then get $x(x-a) = 0$. Next, let $X \in \cp(1)$ with $SX = x X$. From Corollary \ref{IRF6} we see that $2x^2 - 2ax - 1 = 0$. Since $a$ is constant by Proposition \ref{IRF5}, it follows that $M$ is a real hypersurface of $\bar{M}$ with constant principal curvatures. We write $a = \sqrt{2}\cot(\sqrt{2}t)$ with some $0 < t < \frac{\pi}{\sqrt{2}}$. Then the other possible principal curvatures of $M$ are $b =0$, $c = \frac{1}{\sqrt{2}}\cot(\frac{1}{\sqrt{2}}t)$ and $d = -\frac{1}{\sqrt{2}}\tan(\frac{1}{\sqrt{2}}t)$. Note that $c$ and $d$ are the two different solutions of the quadratic equation $2x^2 - 2a x - 1 = 0$. For $x \in \{a,b,c,d\}$ we define $T_x = \{X \in {\mathcal C} : SX = x X\}$. Then we have
\[
{\mathcal C} = T_a \oplus T_b \oplus T_c \oplus T_d\ ,\ \cp(0) = (T_a \oplus T_b)_o\ ,\ \cp(1) = (T_c \oplus T_d)_o
\]
and, if $T_x$ is not trivial, $T_x$ is the subbundle of $TM$ consisting of all principal curvature vectors of $M$ with respect to $x$ which are perpendicular to $\xi$. Since $S\phi = \phi S$, each $T_x$ is a complex subbundle of $TM$.

For $p \in M$ we denote by $\gamma_p$ the geodesic in $\bar{M}$ with $\gamma_p(0) = p$ and $\dot{\gamma}_p(0) = J\xi_p$, and by $f$ the
smooth map $f : M \to \bar{M} \ ,\ p \mapsto \gamma_p(t)$.
Geometrically, $f$ is the displacement of $M$ at distance $t$ in direction of the normal vector field $J\xi$. For each $p \in M$ the differential
$d_pf$ of $f$ at $p$ can be computed using Jacobi fields by means of
$d_pf(X) = Z_X(t)$,
where $Z_X$ is the Jacobi field along $\gamma_p$ with initial values $Z_X(0) = X$ and $Z_X^\prime(0) = -SX$. Using the explicit description of the Jacobi operator $\bar{R}_{J\xi_o}$ in Lemma \ref{nJo} (which holds at any point of $M$ by using a suitable conjugation) and of the shape operator $S$, we see that $(T_a \oplus T_b)_p$ is contained in the $0$-eigenspace of $\bar{R}_{J\xi_p}$ and $(T_c \oplus T_d)_p$ in the $\frac{1}{2}$-eigenspace of $\bar{R}_{J\xi_p}$. For the Jacobi fields along $\gamma_p$ we thus get the expressions
\[
Z_X(t) = \begin{cases}
\left(\cos(\sqrt{2}t)-\frac{a}{\sqrt{2}}\sin(\sqrt{2}t)\right)E_X(t)
& , \mbox{ if}\ X \in \RR \xi \\
\left(\cos(\frac{1}{\sqrt{2}}t) - x\sqrt{2}\sin(\frac{1}{\sqrt{2}}t)\right) E_X(t)
& , \mbox{ if}\ X \in T_x\ \mbox{and}\ x \in \{c,d\} \\
(1 - x t)E_X(t) & , \mbox{ if}\ X \in T_x\ \mbox{and}\ x \in \{a,b\}\ ,
\end{cases}
\]
where $E_X$ denotes the parallel vector field along $\gamma_p$ with $E_X(0) = X$. This shows that the kernel of $df$ is $\RR\xi \oplus T_c$
and that $f$ is of constant rank $\rk(T_a \oplus T_b \oplus T_d)$. So, locally, $f$ is a submersion onto a submanifold $P$ of $\bar{M}$. Moreover, the tangent space of $P$ at $f(p)$ is obtained by parallel translation of $(T_a \oplus T_b \oplus T_d)_p$, which is a complex subspace of $T_p\bar{M}$. Since $J$ is parallel along $\gamma_p$, $T_{f(p)}P$ is a complex subspace of $T_{f(p)}\bar{M}$. Thus $P$ is a complex submanifold of $\bar{M}$. 

The vector $\eta_p = \dot{\gamma}_p(t)$ is a unit normal vector of $P$ at $f(p)$. The shape operator $S^P_{\eta_p}$ of $P$ with respect to $\eta_p$ can be computed using Jacobi fields via the equation
$S^P_{\eta_p}Z_X(t) = - Z_X^\prime(t)$.
From this we immediately get that for each $x \in \{a,b,d\}$ the parallel translate of $(T_x)_p$ along $\gamma_p$ from $p$ to $f(p)$ is a principal curvature space of $P$ with respect to $\eta_p$, provided that $(T_x)_p$ is nontrivial. Moreover, the corresponding principal curvature is $0$ for $x \in \{b,d\}$ and $\frac{\sqrt{2}\cot(\sqrt{2}t)}{1-\sqrt{2}\cot(\sqrt{2}t)t}$ for $x = a$. Any complex submanifold of a K\"ahler
manifold is minimal and thus $T_a$ is either trivial or $t = \frac{\pi}{\sqrt{8}}$ (in which case the corresponding principal curvature becomes zero). The vectors of the form $\eta_q$, $q \in f^{-1}(\{f(p)\})$, form an open subset of the unit sphere in the normal space of $P$ at $f(p)$. Since $S^P_{\eta_q}$ vanishes for all $\eta_q$ it follows that $P$ is totally geodesic in $\bar{M}$. As $M$ is connected, rigidity of totally geodesic submanifolds implies that the entire submanifold $M$ is an open part of a tube with radius $t$ around a connected, complete, totally geodesic, complex submanifold $P$ of $\bar{M}$. 

The principal curvatures of $M$ with respect to the normal vector field $-J\xi$ are of course $-a,-b,-c,-d$. The corresponding eigenspace distributions do not change and therefore coincide with $T_a,T_b,T_c,T_d$. Now define $s = \frac{\pi}{\sqrt{2}} - t$. Then we have $0 < s < \frac{\pi}{\sqrt{2}}$, $-a = \sqrt{2}\cot(\sqrt{2}s)$, $-b = 0$, $-c = -\frac{1}{\sqrt{2}}\tan(\frac{1}{\sqrt{2}}s)$ and $-d = \frac{1}{\sqrt{2}}\cot(\frac{1}{\sqrt{2}}s)$. With exactly the same arguments as above, just interchanging the roles of $c$ and $d$, we can show that the entire submanifold $M$ is an open part of a tube with radius $s$ around a connected, complete, totally geodesic, complex submanifold $Q$ of $\bar{M}$.

We summarize the previous discussion in
\begin{prop}\label{focalsets}
Let $0 < t < \frac{\pi}{\sqrt{2}}$ so that $a = \sqrt{2}\cot(\sqrt{2}t)$ and define $s = \frac{\pi}{\sqrt{2}} - t$. Then there exist connected, complete, complex, totally geodesic submanifolds $P$ and $Q$ of $\bar{M}$ so that $M$ is an open part of a tube with radius $t$ around $P$ and an open part of a tube with radius $s$ around $Q$. Moreover, $p = \gamma_o(t) \in P$, $q = \gamma_o(-s) \in Q$,
\[
T_pP = \cp(0) \oplus (T_d)_o\ ,\ \nu_pP = \CC u_\delta \oplus (T_c)_o
\]
and
\[
T_qQ = \cp(0) \oplus (T_c)_o\ ,\ \nu_qQ = \CC u_\delta \oplus (T_d)_o,
\]
where we identify tangent vectors along the geodesic $\gamma_o$ by parallel translation.
\end{prop}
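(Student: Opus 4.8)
The plan is to realize $M$ as a tube by means of Jacobi field theory along the geodesics leaving $M$ in the direction of the normal field $J\xi$, to identify the focal submanifold using the minimality of complex submanifolds, and to conclude via rigidity of totally geodesic submanifolds. All the ingredients are already in place: by Corollary \ref{IRF7} the hypersurface $M$ is curvature-adapted, so $S$ and $\bar{R}_{J\xi}$ are simultaneously diagonalizable; by Proposition \ref{IRF5} the principal curvatures are the constants $a,b,c,d$ recorded above; and Lemma \ref{nJo} gives the eigenvalues $0$ and $\frac{1}{2}$ of $\bar{R}_{J\xi_o}$ on $\cp(0)$ and $\cp(1)$ respectively, which by conjugation hold along every normal geodesic.

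First I would fix $p \in M$ and consider the normal geodesic $\gamma_p$ with $\dot\gamma_p(0) = J\xi_p$, together with the displacement map $f \colon p \mapsto \gamma_p(t)$. The differential $d_pf(X) = Z_X(t)$ is computed from the Jacobi field $Z_X$ with $Z_X(0) = X$ and $Z_X'(0) = -SX$. Because each principal curvature space $T_x$ lies in a single eigenspace of $\bar{R}_{J\xi_p}$, the Jacobi equation decouples into scalar ODEs and integrates to the explicit expressions displayed before the proposition. Reading off when the scalar coefficient vanishes, the choice $a = \sqrt{2}\cot(\sqrt{2}t)$ forces $\RR\xi \subset \ker d_pf$ and $c = \frac{1}{\sqrt{2}}\cot(\frac{1}{\sqrt{2}}t)$ forces $T_c \subset \ker d_pf$, whereas the coefficients attached to $T_b$ and $T_d$ are manifestly nonzero (the $T_a$ contribution is treated below). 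Hence $\ker d_pf = \RR\xi \oplus T_c$ and $f$ has constant rank, so locally it is a submersion onto a submanifold $P$ whose tangent space at $f(p)$ is the parallel translate of $(T_a \oplus T_b \oplus T_d)_p$. Since this space is $\phi$-invariant and $J$ is parallel along $\gamma_p$, $P$ is a complex submanifold of $\bar{M}$.

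Next I would compute the shape operator of $P$ in the direction $\eta_p = \dot\gamma_p(t)$ via $S^P_{\eta_p}Z_X(t) = -Z_X'(t)$. The parallel translates of $(T_b)_p$ and $(T_d)_p$ become principal curvature spaces with curvature $0$, while $(T_a)_p$ yields the curvature $\frac{a}{1-at}$. Here enters the one genuinely delicate point: \emph{a priori} $T_a$ could be nontrivial and obstruct total geodesy. The resolution is that $P$ is complex, hence minimal in the K\"{a}hler manifold $\bar{M}$; the only nonvanishing contribution to the mean curvature comes from $T_a$, so minimality forces either $T_a$ to be trivial or $a = 0$, i.e.\ $t = \frac{\pi}{\sqrt{8}}$, in which case that curvature already vanishes. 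In either case every $S^P_{\eta_p}$ vanishes. As the vectors $\eta_q$ for $q \in f^{-1}(f(p))$ sweep out an open subset of the unit normals to $P$ at $f(p)$, all shape operators of $P$ vanish, so $P$ is totally geodesic. Rigidity of totally geodesic submanifolds, using connectedness of $M$, then shows that $M$ is an open part of the tube of radius $t$ around the complete complex totally geodesic submanifold $P$, with $p = \gamma_o(t) \in P$, $T_pP = \cp(0) \oplus (T_d)_o$ and $\nu_pP = \CC u_\delta \oplus (T_c)_o$ after parallel translation.

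Finally I would run the identical argument for the opposite unit normal $-J\xi$, whose principal curvatures are $-a,-b,-c,-d$ with the same eigenspace distributions. Setting $s = \frac{\pi}{\sqrt{2}} - t$ rewrites these as $\sqrt{2}\cot(\sqrt{2}s)$, $0$, $-\frac{1}{\sqrt{2}}\tan(\frac{1}{\sqrt{2}}s)$ and $\frac{1}{\sqrt{2}}\cot(\frac{1}{\sqrt{2}}s)$, so that the roles of $c$ and $d$ are exchanged; repeating the focalization yields a second complete complex totally geodesic submanifold $Q$ with $q = \gamma_o(-s) \in Q$, $T_qQ = \cp(0) \oplus (T_c)_o$ and $\nu_qQ = \CC u_\delta \oplus (T_d)_o$. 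I expect the minimality step controlling $T_a$ to be the main obstacle, since without it one cannot rule out that the focal set fails to be totally geodesic; the remainder is a careful bookkeeping of Jacobi fields.
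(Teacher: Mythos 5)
Your proposal is correct and follows essentially the same argument as the paper: the same Jacobi field computation of the displacement map $f$, the same use of minimality of complex submanifolds to dispose of $T_a$, the same open-set-of-normals and rigidity argument, and the same exchange of $c$ and $d$ for the opposite normal $-J\xi$. Nothing essential is missing.
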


Since $\cp(0)$, $\cp(0) \oplus (T_d)_o$ and $\cp(0) \oplus (T_c)_o$ are Lie triple system in $\cp$, the subspace $\cp(0)$ is also a Lie triple system in both $\cp(0) \oplus (T_d)_o$ and $\cp(0) \oplus (T_c)_o$. This means that there exist isometric copies of $\Sigma(0) = G(0)/K(0)$ in both $P$ and $Q$ and in both cases the embedding is totally geodesic. The slice representation of $K(0)$ on the two normal spaces $\nu_pP$ and $\nu_qQ$ leaves $\CC u_\delta$ invariant and hence also $(T_c)_o$ and $(T_d)_o$. The representation of $\ck(0)$ on $\cp(1) = (T_c \oplus T_d)_o$ is as follows:
\begin{itemize}
\item[($A_r$)] trivial representation of $\{0\}$ on $\CC^{r-1} \cong \cp(1)$ if $k = 1$;\\
standard representation of $\cu_1 \oplus \cs\cu_{k-1} \oplus \cs\cu_{r-k}$ on $\CC^{k-1} \oplus \CC^{r-k} \cong \CC^{r-1} \cong \cp(1)$ if $k \geq 2$;
\item[($B_r$)] standard representation of $\cu_1$ on $\bigoplus^{2r-3}\CC \cong \cp(1)$;
\item[($C_r$)] standard representation of $\cu_{r-1}$ on $\CC^{r-1} \cong \cp(1)$;
\item[($D_r$)] standard representation of $\cu_1$ on $\bigoplus^{2r-4}\CC \cong \cp(1)$ if $\bar{M} = G_2^+(\RR^{2r})$;\\ 
standard representation of $\cu_{r-2}$ on $\CC^{r-2} \oplus \CC^{r-2} \cong \CC^{2r-4} \cong \cp(1)$ if $\bar{M} = SO_{2r}/U_r$;
\item[($E_6$)] standard representation of $\cu_5$ on $\Lambda^2\CC^5 \cong \CC^{10} \cong \cp(1)$;
\item[($E_7$)] the irreducible representation of $\cs\co_2 \oplus \cs\co_{10}$ on $\CC^{16} \cong \cp(1)$ that is induced from the isotropy representation of $E_6/Spin_{10}U_1$. This involves the two inequivalent irreducible spin representations of $\cs\co_{10}$. We only need the fact that this representation is irreducible, but the interested reader can find an explicit version in \cite{Ad96}.
\end{itemize}

\medskip
The index $i(\bar{M})$ of a Riemannian manifold $\bar{M}$ is the minimal codimension of a (nontrivial) totally geodesic submanifold. For Riemannian symmetric spaces the index was first studied by Onishchik in \cite{On80} and then by the first author and Olmos in \cite{BO16} and \cite{BO17}. In particular, we have

\begin{thm} \label{index2} {\rm (\cite{BO17},\cite{On80})}
The index of an irreducible Hermitian symmetric space $\bar{M}$ of compact type satisfies $i(\bar{M}) \geq 2$. Moreover, the equality $i(\bar{M}) = 2$ holds if and only if $\bar{M} = \CC P^r$ or $\bar{M} = G_2^+(\RR^{2r+1})$ or $\bar{M} = G_2^+(\RR^{2r})$.
\end{thm}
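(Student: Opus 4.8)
The plan is to split the statement into three independent pieces: the general lower bound $i(\bar M)\ge 2$, the explicit realization of the bound for the three listed families, and the strictly harder lower bound $i(\bar M)\ge 3$ for all other irreducible Hermitian symmetric spaces of compact type. Only the last piece is genuinely delicate.

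First I would prove $i(\bar M)\ge 2$ by excluding totally geodesic hypersurfaces. Suppose $\Sigma\subset\bar M$ were totally geodesic of real codimension $1$. Then its shape operator vanishes, so the Weingarten formula gives $\nabla_X\xi=\phi SX=0$ for the Reeb field $\xi$ of $\Sigma$; thus $\xi$ is parallel on $\Sigma$, and since $\Sigma$ is totally geodesic the ambient Jacobi operator satisfies $\bar{R}_\xi X=\bar{R}(X,\xi)\xi=0$ for all $X\in T\Sigma=(J\xi)^\perp$. Hence $\bar{R}_\xi$ has rank at most one, and by the Kähler identity $\bar{R}(JX,J\xi)J\xi=J\bar{R}(X,\xi)\xi$ the same degeneracy holds for $\bar{R}_{J\xi}$ on $\xi^\perp$. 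In particular $\bar{K}(\xi,X)=0$ for every $X$ orthogonal to the holomorphic plane $\RR\xi\oplus\RR J\xi$. This is impossible: for an irreducible Hermitian symmetric space of compact type of complex dimension at least $2$ the Jacobi operator $\bar{R}_\xi$ has rank at least $2$ for every unit vector, as one already sees in the most singular direction from the eigenvalue pattern in Lemma \ref{nJo}, where the $\frac12$-eigenspace $\cp(1)$ is never trivial. The sole exception $\CC P^1=S^2$ has constant curvature and is excluded. This contradiction yields $i(\bar M)\ge 2$, and it reproves the classical fact that irreducible symmetric spaces of compact type which are not real space forms admit no totally geodesic hypersurfaces.

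Next I would realize the bound for the three families from above. In $\CC P^r$ the totally geodesic $\CC P^{r-1}$ has real codimension $2$, and in the complex quadrics the totally geodesic quadric of one lower complex dimension, namely $G_2^+(\RR^{2r})\subset G_2^+(\RR^{2r+1})$ and $G_2^+(\RR^{2r-1})\subset G_2^+(\RR^{2r})$, has real codimension $2$. Combined with $i(\bar M)\ge 2$ this gives $i(\bar M)=2$ for $\CC P^r$, $G_2^+(\RR^{2r+1})$ and $G_2^+(\RR^{2r})$.

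The remaining and principal task is to show $i(\bar M)\ge 3$ for every other space, namely $G_k(\CC^{r+1})$ with $k\ge 2$, $SO_{2r}/U_r$, $Sp_r/U_r$, $E_6/Spin_{10}U_1$ and $E_7/E_6U_1$; equivalently, that none of these carries a totally geodesic submanifold of real codimension $2$. A codimension-$2$ Lie triple system $\mathfrak{m}\subset\cp$ has a two-dimensional orthogonal complement $\nu$ which is either $J$-invariant (so $\Sigma$ is a complex hypersurface) or meets $J\nu$ trivially, and the hard point is to exclude both cases uniformly. Here I would follow the methods of Onishchik \cite{On80} and Berndt--Olmos \cite{BO16}, \cite{BO17}: the complement $\nu$ is a module for the slice representation of the isotropy subgroup of $\Sigma$, and using the root-space data of Section \ref{StcHss} together with Leung's classification of reflective submanifolds one shows that the isotropy representations of these spaces are too large to admit a codimension-$2$ Lie triple system, so that the minimal codimension of a totally geodesic submanifold exceeds $2$ in each case. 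I expect this representation-theoretic lower bound to be the main obstacle: unlike the bound $i(\bar M)\ge 2$, it cannot be extracted from curvature alone and requires a case-by-case analysis of the isotropy action, which is exactly what makes the full index computation in \cite{On80} and \cite{BO17} substantial.
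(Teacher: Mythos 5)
The first thing to note is that the paper does not prove Theorem \ref{index2} at all: it is imported wholesale from \cite{BO17} and \cite{On80} (hence the citations attached to the statement) and then used as a black box in the classification. So there is no internal proof to compare yours against; the only question is whether your argument amounts to an independent proof. It does not. The part of the theorem the paper actually needs --- and the only hard part --- is the ``only if'' direction: every irreducible compact Hermitian symmetric space other than $\CC P^r$, $G_2^+(\RR^{2r+1})$, $G_2^+(\RR^{2r})$ admits no totally geodesic submanifold of real codimension $2$. Your paragraph on this consists of a pointer to \cite{On80}, \cite{BO16}, \cite{BO17} plus the sentence that ``the isotropy representations are too large''; no codimension-two Lie triple system is ever excluded, and you yourself flag this as the main obstacle. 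That is a citation, not a proof --- although, to be fair, deferring to the literature is exactly what the paper does. One concrete warning in case you did attempt the case-by-case work: your list of ``all other spaces'' is literally false as written, because of exceptional isomorphisms. $G_2(\CC^4) \cong G_2^+(\RR^6)$, $Sp_2/U_2 \cong G_2^+(\RR^5)$ and $SO_8/U_4 \cong G_2^+(\RR^8)$ all have index $2$, so ``$i(\bar M) \geq 3$ for $G_k(\CC^{r+1})$, $k \geq 2$, and for $SO_{2r}/U_r$'' fails at $(k,r) = (2,3)$ and at $r = 4$. The paper is careful to exclude precisely these coincidences in Case 2 of its classification; any genuine proof of Theorem \ref{index2} has to see them.

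Your easy half ($i(\bar M) \geq 2$ together with the realization of equality by $\CC P^{r-1} \subset \CC P^r$ and the totally geodesic quadric hypersurfaces) is essentially sound, but the step that carries it needs repair. You claim that $\bar{R}_\xi$ has rank at least $2$ for every unit vector, justified by the eigenvalue pattern of Lemma \ref{nJo}; that lemma computes the Jacobi operator only for the highest-root direction $u_\delta$, and you give no reason why that direction minimizes the rank (this is neither needed nor obvious). The clean argument: every unit vector lies in a maximal abelian subspace $\ca \subset \cp$, and for $\xi \in \ca$ one has $\bar{R}_\xi = -(\ad\,\xi)^2|_{\cp}$, whose rank equals the sum of the multiplicities of the restricted roots $\lambda$ with $\lambda(\xi) \neq 0$. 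If this sum were at most $1$, there would be exactly one positive restricted root $\mu$ with $\mu(\xi) \neq 0$, of multiplicity one and with $2\mu$ not a root. For any root $\lambda$ with $\lambda(\xi) = 0$, neither $\lambda + \mu$ nor $\lambda - \mu$ can then be a root (each fails to vanish on $\xi$, yet cannot equal $\pm\mu$), so the $\mu$-string through $\lambda$ is trivial and $\lambda \perp \mu$. Hence the restricted root system splits orthogonally into $\{\pm\mu\}$ and the roots vanishing on $\xi$, contradicting irreducibility unless the latter set is empty, which forces $\bar M = \CC P^1$. With this fix your exclusion of totally geodesic real hypersurfaces, and therefore the inequality $i(\bar M) \geq 2$, is correct.
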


The Hermitian symmetric spaces with $i(\bar{M}) = 2$ admit totally geodesic complex hypersurfaces, namely $\CC P^{r-1} \subset \CC P^r$, $G_2^+(\RR^{2r}) \subset G_2^+(\RR^{2r+1})$ and $G_2^+(\RR^{2r-1}) \subset G_2^+(\RR^{2r})$. Theorem \ref{index2} implies that the irreducible Hermitian symmetric spaces of compact type that admit a totally geodesic complex hypersurface are, in our notation, those for which the root system and corresponding simple root are $((A_r),\alpha_1)$, $((B_r),\alpha_1)$ and $((D_r),\alpha_1)$. The classification of real hypersurfaces with isometric Reeb flow in these Hermitian symmetric spaces was obtained by Okumura in \cite{Ok75} and the authors in \cite{BS13}:

\begin{thm} \label{known} {\rm (\cite{BS13},\cite{Ok75})}
Let $M$ be a connected orientable real hypersurface with isometric Reeb flow in $\bar{M} = \CC P^r$ or $\bar{M} = G_2^+(\RR^{2r})$. Then $M$ is congruent to an open part of a tube of radius $0 < t < \pi/\sqrt{2}$ around the totally geodesic submanifold $\Sigma$ in $\bar{M}$, where
\begin{itemize}
\item[(i)] $\bar{M} = \CC P^r$ and $\Sigma = \CC P^k$, $0 \leq k \leq r-1$;
\item[(ii)] $\bar{M} = G_2^+(\RR^{2r})$ and $\Sigma = \CC P^{r-1}$, $3 \leq r$.
\end{itemize}
There exist no real hypersurfaces with isometric Reeb flow in $G_2^+(\RR^{2r+1})$, $r \geq 2$.
\end{thm}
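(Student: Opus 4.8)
The three spaces named in the statement are precisely the irreducible Hermitian symmetric spaces of compact type of index $2$ (Theorem \ref{index2}); equivalently, they are the only ones carrying a totally geodesic complex hypersurface. My plan is to run the general machinery of Section \ref{StorhwiRf} and feed in the explicit curvature tensors only at the very end. By Corollary \ref{isomgeod}, Proposition \ref{IRF5} and Corollary \ref{IRF7}, the hypersurface $M$ has geodesic Reeb flow, the function $a$ is constant, and $M$ is curvature-adapted, so $S$ and $\bar{R}_{J\xi}$ can be diagonalized simultaneously; moreover Corollary \ref{IRF6} shows that every principal curvature $b$ along a unit vector $X \in {\mathcal C}$ with $\bar{R}_{J\xi}X = \kappa X$ is a root of $b^2 - ab - \kappa = 0$. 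I would then proceed in four steps: first compute the spectrum of $\bar{R}_{J\xi}$; second deduce that $M$ has constant principal curvatures; third run the Jacobi field analysis exactly as in the derivation of Proposition \ref{focalsets} to realize $M$ as an open part of a tube around a connected, complete, totally geodesic \emph{complex} submanifold $P$; and fourth identify $P$.

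For $\bar{M} = \CC P^r$ I would first note that the curvature tensor has constant holomorphic sectional curvature, so $\xi$ is an eigenvector of $\bar{R}_{J\xi}$ and all of ${\mathcal C}$ is a single eigenspace with one eigenvalue $\kappa$. Hence $b^2 - ab - \kappa = 0$ has two constant roots on ${\mathcal C}$, and together with $a$ this already exhibits $M$ as a hypersurface with constant principal curvatures. The tube step then applies, and since the totally geodesic complex submanifolds of $\CC P^r$ are exactly the linearly embedded $\CC P^k$ with $0 \le k \le r-1$, one gets $\Sigma = P = \CC P^k$, which is case (i). For the converse I would verify directly that each such tube satisfies $S\phi = \phi S$: its shape operator, computed by the same Jacobi fields, has $\phi$-invariant principal curvature spaces, so $S\phi = \phi S$ and Proposition \ref{isomcomm} gives isometric Reeb flow.

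For the complex quadrics $Q^n = G_2^+(\RR^{n+2})$ the curvature tensor involves, besides $J$, the circle $\cA$ of real structures (conjugations) $A$ with $AJ = -JA$, and the spectrum of $\bar{R}_{J\xi}$ depends on the position of the normal $J\xi$ relative to $\cA$. The decisive tool here is Proposition \ref{IRF2}, the identity $g(\bar{R}_{J\xi}X,Y) = g(\bar{R}_{J\xi}JX,JY)$ for $X,Y \in {\mathcal C}$. Since the $A$-terms of the curvature anticommute with $J$, a short computation shows that in the $\cA$-principal configuration $\bar{R}_{J\xi}X = X + AX$ on ${\mathcal C}$ (in a suitable normalization), so the identity forces $g(AX,Y) = 0$ on ${\mathcal C}$, which is impossible for $n \ge 2$; the analogous computation excludes the mixed configurations and leaves only the $\cA$-isotropic normal. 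With $J\xi$ $\cA$-isotropic the spectrum of $\bar{R}_{J\xi}$ is constant, so $M$ again has constant principal curvatures, and the tube step identifies $P$ as a totally geodesic $\CC P^{n/2}$ in $Q^n$. For the even quadric $n = 2r-2$ this gives $\Sigma = \CC P^{r-1}$, namely case (ii), with the converse verified as before; for the odd quadric $Q^{2r-1} = G_2^+(\RR^{2r+1})$ the forced focal set $\CC P^{(2r-1)/2}$ would have non-integral complex dimension and hence cannot exist, which is exactly the asserted nonexistence.

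The main obstacle is the quadric case, and within it the determination of the $\cA$-type of the normal vector. Proving that the $J$-commutation relation of Proposition \ref{IRF2} rules out the $\cA$-principal and mixed normals and forces the $\cA$-isotropic one is the heart of the matter; this is also precisely where the parity of $n$ enters, since only the $\cA$-isotropic configuration leads to a focal set of the form $\CC P^{n/2}$, and a totally geodesic half-dimensional complex projective space sits inside $Q^n$ if and only if $n$ is even.
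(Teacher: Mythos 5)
The paper contains no proof of this theorem to compare against: Theorem \ref{known} is imported wholesale from \cite{Ok75} and \cite{BS13}, and the classification machinery of Section 4 is deliberately run under the standing assumption $i(\bar{M}) > 2$ precisely so that these two ambient spaces never need to be treated. Your sketch is thus an attempt to reconstruct the cited proofs, and judged as such it has a genuine circularity. You feed in Proposition \ref{IRF5} and Corollaries \ref{IRF6}, \ref{IRF7} as black boxes, but all of these rest on Proposition \ref{IRF4}, whose proof disposes of the possibility $da(\xi) \neq 0$ by producing a totally geodesic complex hypersurface, concluding $i(\bar{M}) \leq 2$ --- hence that $\bar{M}$ is a complex projective space or a quadric --- and then \emph{quoting} \cite{BS13,MR86,Ok75,Su17} for the constancy of $a$ in exactly those spaces. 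For your two ambient spaces that citation is (the key lemma of) the very theorem you are proving. To break the circle you must re-derive the constancy of $a$ from the circularity-free part of the machinery (Propositions \ref{GRF}--\ref{IRF2}, Corollary \ref{IRF3}, Proposition \ref{normalsingular}) together with the explicit curvature tensor. This is possible: Corollary \ref{IRF3} with $Y = JX$, $c = b$, $\|X\|=1$ gives $b\,da(\xi)=0$ and $b^2 - ab - g(\bar{R}_{J\xi}X,X) = 0$; if $da(\xi) \neq 0$ at a point, then $S$ vanishes on ${\mathcal C}$ there and the first equation forces $g(\bar{R}_{J\xi}X,Z) = 0$ for all $X,Z \in {\mathcal C}$, hence ${\mathcal C} \subset \ker \bar{R}_{J\xi}$ by nonnegativity of the curvature, which is impossible (in $\CC P^r$ the restriction of $\bar{R}_{J\xi}$ to ${\mathcal C}$ is a positive multiple of the identity; in $G_2^+(\RR^{n+2})$, $n \geq 3$, that kernel has dimension at most $n < 2n-2 = \dim {\mathcal C}$); then $da(X) = -g(\bar{R}_{J\xi}\xi,JX)$ vanishes once $\xi$ is an eigenvector of $\bar{R}_{J\xi}$, which is automatic in $\CC P^r$ and holds for $\cA$-isotropic normals in the quadric. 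None of this is in your sketch, and in the quadric it forces you to settle the $\cA$-type of the normal (via Propositions \ref{IRF2} and \ref{normalsingular}, both circularity-free) \emph{before} you may speak of constant principal curvatures, rather than after.

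The second gap is the identification of the focal set in the quadric, which is the real content of \cite{BS13} and of the nonexistence statement for $G_2^+(\RR^{2r+1})$. The Jacobi-field and rigidity argument yields only that $M$ lies on a tube around \emph{some} connected, complete, totally geodesic complex submanifold $P$ whose tangent space is $\CC AN \oplus (T_d)$ in your notation; nothing proved so far says that $\dim T_c = \dim T_d$, i.e.\ that $\dim_\CC P = n/2$, nor that $P$ is a projective space rather than a lower-dimensional totally geodesic quadric, or $\CC P^1 \times \CC P^1$, or a $\CC P^m$ with $2m < n$. Your parity argument for the odd quadrics rests entirely on this unproved half-dimensionality, so as written the nonexistence claim is an assertion, not a deduction. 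Note also that the paper's own device for identifying focal sets --- the Case 2 argument that the representation of $\ck(0)$ on $\cp(1)$ has exactly two irreducible components, which must then coincide with $(T_c)_o$ and $(T_d)_o$ --- is unavailable here: for the quadric $\ck(0) \cong \cu_1$ acts on $\cp(1)$ with $2r-4$ irreducible summands, which is exactly why the paper quarantines the quadric into Theorem \ref{known} and outsources it to \cite{BS13}. At this decisive step you need the replacement argument of \cite{BS13}, which comes from the interplay between the shape operator and the conjugation $A$; the sketch does not contain it. The $\CC P^r$ case, by contrast, is essentially complete once the constancy of $a$ is patched as above, since there the identification of $P$ really does reduce to the classical fact that the totally geodesic complex submanifolds of $\CC P^r$ are the linear subspaces $\CC P^k$.
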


We assume from now on that $i(\bar{M}) > 2$, or equivalently, that $\bar{M}$ does not admit a totally geodesic complex hypersurface. We separate our argument into two cases.

\medskip

{\sc Case 1.} {\it The representation of $\ck(0)$ on $\cp(1)$ is irreducible.}

\smallskip
The representation of $\ck(0)$ on $\cp(1)$ is irreducible precisely if the root system is $(C_r)$ ($r \geq 3$), $(E_6)$ or $(E_7)$. Since $[\ck(0),\cp(0)] \subset \cp(0)$, it follows using Proposition \ref{focalsets} that $[\ck(0), (T_c)_o] \subset (T_c)_o$ and $[\ck(0), (T_d)_o] \subset (T_d)_o$. Since $\cp(1) = (T_c \oplus T_d)_o$ and the representation of $\ck(0)$ on $\cp(1)$ is irreducible, either $(T_c)_o$ or $(T_d)_o$ must be trivial. This implies that either $P$ or $Q$ is a totally geodesic complex hypersurface of $\bar{M}$. This contradicts $i(\bar{M}) > 2$. We thus conclude:

\begin{thm}\label{nonexistence}
There exist no real hypersurfaces with isometric Reeb flow in the Hermitian symmetric spaces $Sp_r/U_r$ ($r \geq 3$), $E_6/Spin_{10}U_1$ and $E_7/E_6U_1$. 
\end{thm}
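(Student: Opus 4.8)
The plan is to combine the focal-set description from Proposition~\ref{focalsets} with the index bound of Theorem~\ref{index2}. By Proposition~\ref{focalsets}, a real hypersurface $M$ with isometric Reeb flow in $\bar{M}$ produces two connected, complete, totally geodesic \emph{complex} submanifolds $P$ and $Q$ with normal spaces $\nu_pP = \CC u_\delta \oplus (T_c)_o$ and $\nu_qQ = \CC u_\delta \oplus (T_d)_o$. The entire argument reduces to showing that, for the three spaces in the statement, one of the summands $(T_c)_o$ or $(T_d)_o$ must vanish: this makes the corresponding normal space equal to the plane $\CC u_\delta$, so that $P$ or $Q$ becomes a totally geodesic complex hypersurface of $\bar{M}$, which is impossible once $i(\bar{M}) > 2$.

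First I would single out these spaces representation-theoretically. Among the representations of $\ck(0)$ on $\cp(1)$ listed after Proposition~\ref{focalsets}, exactly those attached to the root systems $(C_r)$ with $r \geq 3$, $(E_6)$ and $(E_7)$ are irreducible, namely the standard representation of $\cu_{r-1}$ on $\CC^{r-1}$, the representation of $\cu_5$ on $\Lambda^2\CC^5 \cong \CC^{10}$, and the representation of $\cs\co_2 \oplus \cs\co_{10}$ on $\CC^{16}$ induced from the isotropy representation of $E_6/Spin_{10}U_1$; the last is irreducible precisely because it arises as the isotropy representation of an irreducible Hermitian symmetric space. In every other case this representation splits, so these three spaces are exactly the ones governed by the present case.

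The key step is to promote the $\ck(0)$-invariance of $\cp(1)$ to invariance of the finer splitting $\cp(1) = (T_c)_o \oplus (T_d)_o$. Because $\ck(0) = [\cp(0),\cp(0)]$ and $\cg(0) = \ck(0) \oplus \cp(0)$ is a subalgebra, we have $[\ck(0),\cp(0)] \subset \cp(0)$, so $\ck(0)$ preserves $\cp(0)$ under the isotropy action. Since $K(0)$ fixes both the focal point $p = \gamma_o(t)$ and the normal geodesic direction $\dot\gamma_o \propto u_\delta$, and commutes with $J$, its slice representation fixes the plane $\CC u_\delta$. Combining this with the decompositions $T_pP = \cp(0) \oplus (T_d)_o$ and $\nu_pP = \CC u_\delta \oplus (T_c)_o$ from Proposition~\ref{focalsets}, invariance of $\cp(0)$ and of $\CC u_\delta$ forces invariance of their orthogonal complements $(T_d)_o$ and $(T_c)_o$. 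Irreducibility of $\cp(1)$ then compels one of these two invariant summands to be trivial.

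Finally I would translate the conclusion back into geometry: if $(T_c)_o = \{0\}$ then $\nu_pP = \CC u_\delta$ has real dimension $2$, so $P$ is a totally geodesic complex hypersurface, and symmetrically if $(T_d)_o = \{0\}$ then $Q$ is one. Either way $\bar{M}$ would contain a totally geodesic submanifold of codimension $2$, contradicting $i(\bar{M}) > 2$ from Theorem~\ref{index2}. I expect the main obstacle to be the invariance claim $[\ck(0),(T_c)_o] \subset (T_c)_o$ and $[\ck(0),(T_d)_o] \subset (T_d)_o$: one must check that the slice representation genuinely respects the principal-curvature decomposition of $\cp(1)$ and fixes $\CC u_\delta$, rather than only preserving $\cp(1)$ as a whole. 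Once this is secured, the representation-theoretic bookkeeping and the appeal to the index bound are immediate.
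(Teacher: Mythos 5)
Your proposal is correct and follows essentially the same route as the paper's own proof: identifying $(C_r)$ ($r \geq 3$), $(E_6)$ and $(E_7)$ as exactly the cases where the representation of $\ck(0)$ on $\cp(1)$ is irreducible, deducing the invariance $[\ck(0),(T_c)_o] \subset (T_c)_o$ and $[\ck(0),(T_d)_o] \subset (T_d)_o$ from Proposition~\ref{focalsets} together with the slice representation fixing $\CC u_\delta$, concluding that one of $(T_c)_o$, $(T_d)_o$ vanishes so that $P$ or $Q$ is a totally geodesic complex hypersurface, and contradicting $i(\bar{M}) > 2$ via Theorem~\ref{index2}. Your more explicit justification of the invariance step (that $K(0)$ fixes $p$ and the direction $u_\delta$ and hence preserves $T_pP$, $\nu_pP$ and their summands) is just a spelled-out version of the paper's slice-representation argument, not a different method.
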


\medskip

{\sc Case 2.} {\it The representation of $\ck(0)$ on $\cp(1)$ is reducible.}

\smallskip
The representation of $\ck(0)$ on $\cp(1)$ is reducible precisely if the root system and corresponding simple root is $((A_r),\alpha_k)$ ($4 \leq 2k \leq r+1$) or $((D_r),\alpha_r)$ ($r \geq 4$). Note that the index of $G_2(\CC^4) \cong G_2^+(\RR^6)$ and of $SO_8/U_4 \cong G_2^+(\RR^8)$ is equal to $2$ and so we exclude these low-dimensional cases here. 

As above, we know that $[\ck(0), (T_c)_o] \subset (T_c)_o$ and $[\ck(0), (T_d)_o] \subset (T_d)_o$. However, the representation of $\ck(0)$ on $\cp(1)$ has exactly two irreducible components in both cases, which must coincide with $(T_c)_o$ and $(T_d)_o$. This allows us to work out $P$ and $Q$ explicitly. 

We start with $((A_r),\alpha_k)$ ($k \geq 2$). In this case we have 
\[
\Delta_M^+(0)  = \{e_\nu - e_{\mu+1}  : 2 \leq \nu \leq k \leq \mu \leq r-1\}
\]
and
\[
\Delta_M^+(1)  = \{e_\nu - e_{r+1}  : 2 \leq \nu \leq k\} \cup \{e_1 - e_{\mu+1}  : k \leq \mu \leq r-1\}.
\]
These two subsets of $\Delta_M^+(1)$ induce the reducible decomposition 
\[
\cp(1) = \bigoplus_{\alpha \in \Delta_M^+(1)} \CC u_\alpha \cong \CC^{k-1} \oplus \CC^{r-k}.
\]
The roots 
\[
\Delta_M^+(0) \cup  \{e_\nu - e_{r+1}  : 2 \leq \nu \leq k\} = \{e_\nu - e_{\mu+1}  : 2 \leq \nu \leq k \leq \mu \leq r\}
\]
and 
\[
\Delta_M^+(0) \cup \{e_1 - e_{\mu+1}  : k \leq \mu \leq r-1\} = \{e_\nu - e_{\mu+1} : 1 \leq \nu \leq k \leq \mu \leq r-1\}
\]
define Lie triple systems in $\cp$ for which the corresponding totally geodesic submanifolds are $G_{k-1}(\CC^r)$ and $G_k(\CC^r)$, respectively. Since totally geodesic submanifolds are uniquely determined by their Lie triple systems, we see that $P$ and $Q$ coincide with $G_{k-1}(\CC^r)$ and $G_k(\CC^r)$ (in no particular order). Thus we proved:

\begin{thm}\label{Grassmann}
Let $M$ be a real hypersurfaces with isometric Reeb flow in the complex Grassmann manifold $G_k(\CC^{r+1})$, $4 \leq 2k \leq r+1$, $(k,r) \neq (2,3)$. Then $M$ is congruent to an open part of a tube around the totally geodesic submanifold $G_k(\CC^r)$ in $G_k(\CC^{r+1})$.
\end{thm}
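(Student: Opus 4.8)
The plan is to use the focal-set structure from Proposition \ref{focalsets} together with the explicit root data for $\bar{M} = G_k(\CC^{r+1})$ to identify the two totally geodesic submanifolds $P$ and $Q$ around which $M$ is a tube. Because $M$ is simultaneously an open part of a tube of radius $t$ around $P$ and of radius $s$ around $Q$, I do not need to know which of $P$, $Q$ is which: it suffices to show that the unordered pair $\{P,Q\}$ contains $G_k(\CC^r)$. By Proposition \ref{focalsets} the relevant tangent spaces are $T_pP = \cp(0) \oplus (T_d)_o$ and $T_qQ = \cp(0) \oplus (T_c)_o$, so everything comes down to locating $(T_c)_o$ and $(T_d)_o$ inside $\cp(1)$ and recognizing the resulting Lie triple systems.

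First I would record the explicit root data for $((A_r),\alpha_k)$ with $k \geq 2$, namely
\[
\Delta_M^+(0) = \{e_\nu - e_{\mu+1} : 2 \leq \nu \leq k \leq \mu \leq r-1\},
\]
\[
\Delta_M^+(1) = \{e_\nu - e_{r+1} : 2 \leq \nu \leq k\} \cup \{e_1 - e_{\mu+1} : k \leq \mu \leq r-1\}.
\]
The two displayed subsets of $\Delta_M^+(1)$ span complex subspaces of dimensions $k-1$ and $r-k$, yielding the decomposition $\cp(1) \cong \CC^{k-1} \oplus \CC^{r-k}$ as a representation of $\ck(0) \cong \cu_1 \oplus \cs\cu_{k-1} \oplus \cs\cu_{r-k}$, which has exactly two irreducible summands. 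Since $[\ck(0),\cp(0)] \subset \cp(0)$, the slice-representation invariance established just before the theorem gives $[\ck(0),(T_c)_o] \subset (T_c)_o$ and $[\ck(0),(T_d)_o] \subset (T_d)_o$. The two $\ck(0)$-invariant subspaces $(T_c)_o$ and $(T_d)_o$ must therefore be precisely these two irreducible summands, in some order.

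Next I would adjoin $\Delta_M^+(0)$ to each summand and verify that the resulting subsets
\[
\{e_\nu - e_{\mu+1} : 2 \leq \nu \leq k \leq \mu \leq r\}, \quad \{e_\nu - e_{\mu+1} : 1 \leq \nu \leq k \leq \mu \leq r-1\}
\]
define Lie triple systems in $\cp$. Matching these root sets against the standard $\Delta_M^+$-descriptions from Section \ref{StcHss} identifies the corresponding totally geodesic submanifolds as $G_{k-1}(\CC^r)$ and $G_k(\CC^r)$. Because a totally geodesic submanifold is determined uniquely by its Lie triple system, and because $\cp(0)\oplus(T_d)_o$ and $\cp(0)\oplus(T_c)_o$ are exactly these two sets, I conclude that $\{P,Q\} = \{G_{k-1}(\CC^r),G_k(\CC^r)\}$. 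In particular $M$ is an open part of a tube around $G_k(\CC^r)$.

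I expect the main obstacle to be the representation-theoretic bookkeeping: verifying that the $\ck(0)$-action on $\cp(1)$ has exactly two irreducible components, which is what forces $(T_c)_o$ and $(T_d)_o$ to coincide with them. This reducibility is precisely what distinguishes the present case from Case 1, and it is also why the low-rank values must be excluded --- for $(k,r)=(2,3)$ one has $G_2(\CC^4)\cong G_2^+(\RR^6)$, whose index is $2$, and that case is covered by Theorem \ref{known}. Once the two summands are in hand, recognizing the augmented root sets as Grassmannian Lie triple systems and invoking rigidity of totally geodesic submanifolds is routine.
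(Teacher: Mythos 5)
Your proposal is correct and follows essentially the same route as the paper: the invariance of $(T_c)_o$ and $(T_d)_o$ under $\ck(0)$, the identification of these spaces with the two irreducible summands $\CC^{k-1}\oplus\CC^{r-k}$ of $\cp(1)$, the augmentation of each summand by $\Delta_M^+(0)$ to obtain the Lie triple systems of $G_{k-1}(\CC^r)$ and $G_k(\CC^r)$, and the appeal to uniqueness of totally geodesic submanifolds determined by their Lie triple systems. Your observation that the unordered pair $\{P,Q\}$ suffices, and your explanation of the exclusion $(k,r)\neq(2,3)$ via $i(G_2(\CC^4))=2$, both match the paper's reasoning, including its implicit use of the standing assumption $i(\bar{M})>2$ to rule out $(T_c)_o$ or $(T_d)_o$ being trivial.
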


\medskip
Next, we consider $((D_r),\alpha_r)$. In this case we have 
\[
\Delta_M^+(0)  =  \{e_\nu + e_\mu : 3 \leq \nu < \mu \leq r\}
\]
and
\[
\Delta_M^+(1)  = \{e_1 + e_\mu : 2 < \mu \leq r\} \cup \{e_2 + e_\mu : 2 < \mu \leq r\}.
\]
These two subsets of $\Delta_M^+(1)$ induce the reducible decomposition 
\[
\cp(1) = \bigoplus_{\alpha \in \Delta_M^+(1)} \CC u_\alpha \cong \CC^{r-2} \oplus \CC^{r-2}.
\]
The roots 
\[
\Delta_M^+(0) \cup  \{e_1 + e_\mu : 2 < \mu \leq r\} 
\mbox{ and }
\Delta_M^+(0) \cup \{e_2 + e_\mu : 2 < \mu \leq r\} 
\]
define Lie triple systems in $\cp$ for which the corresponding totally geodesic submanifolds are isometric copies of $SO_{2r-2}/U_{r-1}$. We conclude that $P$ and $Q$ both coincide with a totally geodesic $SO_{2r-2}/U_{r-1}$. Thus we proved:

\begin{thm}\label{SO2rUr}
Let $M$ be a real hypersurfaces with isometric Reeb flow in the Hermitian symmetric space $SO_{2r}/U_r$ and $r \geq 5$. Then $M$ is congruent to an open part of a tube around the totally geodesic submanifold $SO_{2r-2}/U_{r-1}$ in $SO_{2r}/U_r$.
\end{thm}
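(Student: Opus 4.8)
The plan is to specialize the focal-set analysis that produced Theorem \ref{Grassmann} to the root system $(D_r)$ with distinguished simple root $\alpha_r$, and to show that \emph{both} focal submanifolds are isometric copies of $SO_{2r-2}/U_{r-1}$. By Proposition \ref{IRF5} and Corollary \ref{IRF7} the hypersurface $M$ is curvature-adapted with constant principal curvatures, and Proposition \ref{focalsets} already realizes $M$ as an open part of a tube of radius $t$ around a connected, complete, totally geodesic, complex submanifold $P$ with $T_pP = \cp(0) \oplus (T_d)_o$, and simultaneously as a tube of radius $s = \frac{\pi}{\sqrt{2}} - t$ around such a submanifold $Q$ with $T_qQ = \cp(0) \oplus (T_c)_o$. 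Thus the whole problem reduces to identifying the two Lie triple systems $\cp(0) \oplus (T_c)_o$ and $\cp(0) \oplus (T_d)_o$.

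First I would record the root data for $SO_{2r}/U_r$ from Section \ref{StcHss}. After moving $J\xi_o$ into $\frac{1}{\sqrt{2}}u_\delta$ with $\delta = e_1 + e_2$, we have $\Delta_M^+(0) = \{e_\nu + e_\mu : 3 \leq \nu < \mu \leq r\}$ and $\Delta_M^+(1) = \{e_1 + e_\mu : 3 \leq \mu \leq r\} \cup \{e_2 + e_\mu : 3 \leq \mu \leq r\}$, giving the reducible decomposition $\cp(1) \cong \CC^{r-2} \oplus \CC^{r-2}$. Both $(T_c)_o$ and $(T_d)_o$ are complex $\ck(0)$-invariant subspaces with $(T_c)_o \oplus (T_d)_o = \cp(1)$, and both are nontrivial, since otherwise one focal set would be a totally geodesic complex hypersurface, forcing $i(\bar{M}) = 2$ and contradicting $r \geq 5$ via Theorem \ref{index2}.

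The key step is the identification of the resulting totally geodesic submanifolds. The two root subsets combine with $\Delta_M^+(0)$ into
\[
\Delta_M^+(0) \cup \{e_1 + e_\mu : 3 \leq \mu \leq r\} = \{e_\nu + e_\mu : \nu < \mu,\ \nu,\mu \in \{1,3,4,\ldots,r\}\}
\]
and the analogous set with index $1$ replaced by $2$. Relabelling the $(r-1)$-element index set $\{1,3,\ldots,r\}$ identifies the first set with the $\Delta_M^+$ of the $((D_{r-1}),\alpha_{r-1})$-model, so it is automatically a Lie triple system whose totally geodesic submanifold is an isometric copy of $SO_{2r-2}/U_{r-1}$; the second set is handled identically with $\{2,3,\ldots,r\}$. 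The Dynkin-diagram symmetry of $D_r$ is exactly what forces both focal sets to be of the same type here, in contrast to the two distinct Grassmannians appearing in Theorem \ref{Grassmann}. Since a totally geodesic submanifold is determined by its Lie triple system, $P$ and $Q$ both coincide with a totally geodesic $SO_{2r-2}/U_{r-1}$, and rigidity of totally geodesic submanifolds then shows that $M$ is an open part of a tube around it.

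The main obstacle is the representation-theoretic pinning down of $(T_c)_o$ and $(T_d)_o$. Unlike the Grassmann case, where $\ck(0) = \cu_1 \oplus \cs\cu_{k-1} \oplus \cs\cu_{r-k}$ has two \emph{inequivalent} summands on $\cp(1)$ so that the splitting is forced, here $\cp(1) \cong \CC^2 \otimes W$ with $\ck(0) \cong \cu_{r-2}$ acting only on $W \cong \CC^{r-2}$; the two irreducible pieces are both the standard module $W$ and hence \emph{equivalent}, so the decomposition into invariant halves is not unique. I would resolve this by identifying $\cp$ with $\Lambda^2\CC^r$, so that $\cp(0) = \Lambda^2 W$ and every complex $\ck(0)$-invariant half has the form $L \otimes W$ for a complex line $L \subset \CC^2$; then $\cp(0) \oplus (L \otimes W) = \Lambda^2(L \oplus W)$ is a Lie triple system isometric to $SO_{2r-2}/U_{r-1}$ regardless of which line $L$ occurs. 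Hence the ambiguity in the splitting does not affect the conclusion, and after a residual normalization one may take $(T_c)_o$ and $(T_d)_o$ to be the two coordinate blocks $\bigoplus_\mu \CC u_{e_1+e_\mu}$ and $\bigoplus_\mu \CC u_{e_2+e_\mu}$.
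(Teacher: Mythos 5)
Your proposal is correct, and its skeleton is the same as the paper's: reduce via Proposition \ref{focalsets} to identifying the Lie triple systems $\cp(0)\oplus(T_c)_o$ and $\cp(0)\oplus(T_d)_o$, rule out triviality of $(T_c)_o$ or $(T_d)_o$ through the index bound of Theorem \ref{index2}, and recognize the resulting root sets (after relabelling the index set $\{1,3,\ldots,r\}$, resp.\ $\{2,3,\ldots,r\}$) as the Lie triple systems of totally geodesic copies of $SO_{2r-2}/U_{r-1}$. Where you genuinely depart from the paper is at the module-theoretic step, and there your treatment is the more careful one. The paper asserts that the representation of $\ck(0)$ on $\cp(1)$ has exactly two irreducible components ``which must coincide with $(T_c)_o$ and $(T_d)_o$''; that inference is airtight in the Grassmann case of Theorem \ref{Grassmann}, where the two summands are inequivalent $\ck(0)$-modules and the invariant splitting is therefore unique, but for $((D_r),\alpha_r)$ the two summands $\bigoplus_\mu \CC u_{e_1+e_\mu}$ and $\bigoplus_\mu \CC u_{e_2+e_\mu}$ are equivalent copies of the standard $\cu_{r-2}$-module: every Cartan element of $\ck(0)=[\cp(0),\cp(0)]$ lies in $i\,\mathrm{span}\{h_\alpha : \alpha\in\Delta_M^+(0)\}$ and so annihilates $e_1$ and $e_2$, giving both summands the same weights $e_\mu$. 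By Schur's lemma the invariant irreducible subspaces of $\cp(1)$ then form a $\CC P^1$-family, so $(T_c)_o$ and $(T_d)_o$ need not be the two coordinate blocks, and the paper's uniqueness claim is, strictly speaking, a gap at this point. Your identification $\cp\cong\Lambda^2\CC^r$, under which every invariant half is $L\otimes W$ for a complex line $L\subset\CC^2$ and $\cp(0)\oplus(L\otimes W)=\Lambda^2(L\oplus W)$ is the Lie triple system of a totally geodesic $SO_{2r-2}/U_{r-1}$ for \emph{every} such $L$, closes exactly this gap: the conclusion is independent of which invariant splitting the shape operator actually produces. In short, the paper's version buys brevity (and is literally correct in the $A_r$ case, where inequivalence does the work), while yours buys completeness in the $D_r$ case --- and it makes transparent why both focal sets are forced to be isometric copies of the same space there, in contrast to the two distinct Grassmannians of Theorem \ref{Grassmann}.
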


From Theorems \ref{known}--\ref{SO2rUr} we obtain the first part of Theorem \ref{classification}. 

\medskip
It remains to prove that any real hypersurface listed in Theorem \ref{classification} has isometric Reeb flow. We first describe $T_o\Sigma$ as a Lie triple system $\bigoplus_{\alpha \in \Delta_\cf} \CC u_\alpha = \cf \subset \cp$, using the root systems at the end of Section \ref{StcHss}. 

\noindent For $ \CC P^k \subset \CC P^r $ the root system is ($A_r$) and 
\[
\Delta_\cf = \{ e_1 - e_{\mu+1} : 1 \leq \mu \leq k\}.
\]
For $ G_k(\CC^r) \subset G_k(\CC^{r+1}) $ the root system is ($A_r$) and
\[
\Delta_\cf =  \{ e_\nu - e_{\mu+1}  : 1 \leq \nu \leq k \leq \mu \leq r-1\}.
\]
For $\CC P^{r-1} \subset G_2^+(\RR^{2r})$ the root system is ($D_r$) and
\[
\Delta_\cf =  \{ e_1 - e_{\mu+1} : 1 \leq \mu \leq r-1\}.
\]
For $ SO_{2r-2}/U_{r-1} \subset SO_{2r}/U_r $ the root system is ($D_r$) and
\[
\Delta_\cf =  \{e_\nu + e_\mu : 2 \leq \nu < \mu \leq r\}.
\]
In all cases $u_\delta$ is perpendicular to $\cf$ and thus $u_\delta/|u_\delta|$ is a unit normal vector of $\Sigma$ at $o$. Consider the geodesic $\gamma$ in $\bar{M}$ with $\gamma(0) =o$ and $\dot{\gamma}(0) = u_\delta/|u_\delta|$. The point $\gamma(t)$ is on the tube $M_t$ of radius $t$ around $\Sigma$. The Jacobi operator $\bar{R}_{u_\delta/|u_\delta|}$ is given by
\[
\bar{R}_{u_\delta/|u_\delta|}x = 
\begin{cases}
0 & ,\ {\rm if}\ x \in \RR u_\delta \oplus \bigoplus_{\alpha \in \Delta_M^+(0)} \CC u_\alpha\\
\frac{1}{2}x & ,\ {\rm if}\ x \in \bigoplus_{\alpha \in \Delta_M^+(1)} \CC u_\alpha   \\
2x & ,\ {\rm if}\ x \in \RR v_\delta.
\end{cases}
\]
We decompose $T_o\Sigma$ into $T_o\Sigma = T_o^0\Sigma \oplus T_o^1\Sigma$ with $T_o^0\Sigma = T_o\Sigma \cap \cp(0)$ and $T_o^1\Sigma = T_o\Sigma \cap \cp(1)$. Since $\Delta_M^+(0) \subset \Delta_\cf$, the normal space $\nu_o\Sigma$ decomposes into $\nu_o\Sigma = \nu_o^1\Sigma \oplus \CC u_\delta$ with $\nu_o^1\Sigma = \nu_o\Sigma \cap \cp(1)$.

Denote by $\gamma^\perp$ the parallel subbundle of $T\bar{M}$ along $\gamma$ defined by $\gamma^\perp_{\gamma(t)} = T_{\gamma(t)}\bar{M} \ominus {\mathbb R}\dot{\gamma}(t)$. Moreover, define the $\gamma^\perp$-valued tensor field $\bar{R}^\perp_{\gamma}$ along $\gamma$ by $\bar{R}^\perp_{\gamma(t)}X = \bar{R}(X,\dot{\gamma}(t))\dot{\gamma}(t)$.
Now consider the ${\rm End}(\gamma^\perp)$-valued differential equation $Y^{\prime\prime} + \bar{R}^\perp_{\gamma} \circ Y = 0$.
Let $D$ be the unique solution of this differential equation with initial values
\[
D(0) = \begin{pmatrix} I & 0 \\ 0 & 0 \end{pmatrix}\ ,\ D^\prime(0) = \begin{pmatrix} 0 & 0 \\ 0 & I \end{pmatrix},
\]
where the decomposition of the matrices is with respect to $\gamma^\perp_o = T_o\Sigma \oplus (\nu_o\Sigma \ominus \RR \dot\gamma(0))$
and $I$ denotes the identity transformation on the corresponding space. Then the shape operator $S(t)$ of $M_t$ with respect to $-\dot{\gamma}(t)$ is given by $S(t) = D^\prime(t)\circ D^{-1}(t)$
(see \cite{BCO16}, Section 10.2.3). If we now decompose $\gamma^\perp_o$ into
\[
\gamma^\perp_o = T_o^0F \oplus T_o^1F \oplus \nu_o^1F \oplus \RR v_\delta,
\]
we get by explicit computation that
\[
S(t) = \begin{pmatrix}
0 & 0 & 0 & 0 \\
0 & -\frac{1}{\sqrt{2}}\tan(\frac{1}{\sqrt{2}}t) & 0 & 0 \\
0 & 0 & \frac{1}{\sqrt{2}}\cot(\frac{1}{\sqrt{2}}t) & 0 \\
0 & 0 & 0 & \sqrt{2}\cot(\sqrt{2}t)
 \end{pmatrix}
\]
with respect to that decomposition. The principal curvatures of $M_t$ therefore are $0$, $-\frac{1}{\sqrt{2}}\tan(\frac{1}{\sqrt{2}}t)$, $\frac{1}{\sqrt{2}}\cot(\frac{1}{\sqrt{2}}t)$, $\sqrt{2}\cot(\sqrt{2}t)$ with multiplicities
\begin{itemize}
\item[(i)] $0$, $2k$, $2(r-k-1)$, $1$ for $\CC P^k \subset \CC P^r$;
\item[(ii)] $2(k-1)(r-k)$, $2(r-k)$, $2(k-1)$, $1$ for $G_k(\CC^r) \subset G_k(\CC^{r+1})$;
\item[(iii)] $2$, $2(r-2)$, $2(r-2)$, $1$ for $ \CC P^{r-1} \subset G_2^+(\RR^{2r})$;
\item[(iv)] $(r-3)(r-2)$, $2(r-2)$, $2(r-2)$, $1$ for $ SO_{2r-2}/U_{r-1} \subset SO_{2r}/U_r$.
\end{itemize}
Note that in case (i) the number of distinct principal curvatures is $2$ (for $k = 0$) or $3$ (for $k > 0$). In the other three cases there are four distinct principal curvatures. The corresponding principal curvature spaces are obtained by parallel translation of the corresponding subspaces in $\gamma_o^\perp$ along $\gamma$ from $o$ to $\gamma(t)$. By construction, $T_o^0\Sigma,T_o^1\Sigma,\nu_o^1\Sigma$ are $J$-invariant subspaces and span ${\mathcal C}_o$. Since $J$ is parallel, it follows that $S(t)\phi = \phi S(t)$. From Proposition \ref{isomcomm} we finally conclude that the Reeb flow on $M_t$ is an isometric flow. This finishes the proof of Theorem \ref{classification}


\begin{thebibliography}{99}

\bibitem{Ad96} J.F.~Adams: 
\emph{Lectures on exceptional Lie groups}. 
Chicago Lectures in Mathematics. University of Chicago Press, Chicago, IL, 1996.

\bibitem{Ba06} W.~Ballmann: 
\emph{Lectures on K\"{a}hler manifolds}. 
ESI Lectures in Mathematics and Physics. European Mathematical Society (EMS), Z\"{u}rich, 2006.

\bibitem{BCO16} J.~Berndt, S.~Console, C.E.~Olmos: 
\emph{Submanifolds and holonomy. Second edition}. 
Monographs and Research Notes in Mathematics, CRC Press, Boca Raton, FL, 2016. 

\bibitem{BO16} J.~Berndt, C.~Olmos:
Maximal totally geodesic submanifolds and index of symmetric spaces.
\emph{J.\ Differential Geom.} 104 (2016), no.\ 2, 187--217.

\bibitem{BO17} J.~Berndt, C.~Olmos:
On the index of symmetric spaces.
To appear in \emph{J.\ Reine Angew.\ Math.}

\bibitem{BS13} J.~ Berndt, Y.J.~Suh:
Real hypersurfaces with isometric Reeb flow in complex quadrics.
\emph{Internat.\ J.\ Math.} 24 (2013), no.\ 7, 1350050, 18pp.

\bibitem{BV92} J.~Berndt, L.~Vanhecke:
Curvature-adapted submanifolds.
\emph{Nihonkai Math.\ J.} 3 (1992), no.\ 2, 177--185.

\bibitem{BS49} A.~Borel, J.~De Siebenthal:
Les sous-groupes ferm\'{e}s de rang maximum des groupes de Lie clos.
\emph{Comment.\ Math.\ Helv.} 23 (1949), 200-221.

\bibitem{CN78} B.Y.~Chen, T.~Nagano:
Totally geodesic submanifolds of symmetric spaces, II.
\emph{Duke Math.\ J.} 45 (1978), no.\ 2, 405--425.

\bibitem{Gr04} A.~Gray:
\emph{Tubes. Second edition}.
Progress in Mathematics, 221, Birkh\"{a}user Verlag, Basel, 2004.

\bibitem{He01} S.~Helgason:
\emph{Differential geometry, Lie groups, and symmetric spaces}.
Graduate Studies in Mathematics, 34, American Mathematical Society, Providence, RI, 2001.

\bibitem{MR86} S.~Montiel, A.~Romero:
On some real hypersurfaces of a complex hyperbolic space.
\emph{Geom.\ Dedicata} 20 (1986), no.\ 2, 245--261.

\bibitem{Ok75} M.~Okumura:
On some real hypersurfaces of a complex projective space.
\emph{Trans.\ Amer. Math.\ Soc.} 212 (1975), 355--364.

\bibitem{On80}
\textcyr{A.L.~Onishchik: 
O vpolne geodezicheskih podmnogoobraziyah simmetricheskih prostranstv.
\emph{Geometricheskie metody v zadachah analiza i algebry}} 2 (1980), 64--85.

\bibitem{Sa90} H.~Samelson:
\emph{Notes on Lie algebras. Second edition}.
Universitext, Springer-Verlag, New York, 1990.

\bibitem{Su17} Y.J.~Suh:
Real hypersurfaces in the complex hyperbolic quadric with isometric Reeb flow.
To appear in \emph{Commun.\ Contemp.\ Math.}

\end{thebibliography}
\end{document}